\documentclass[12pt]{amsart}

\usepackage{enumerate}
 
\usepackage[active]{srcltx}
\usepackage{nicefrac}

\usepackage{amsmath}

\allowdisplaybreaks[4]

\setlength{\textwidth}{165mm}

\setlength{\textheight}{215mm}
\setlength{\parindent}{8mm}
\setlength{\oddsidemargin}{0pt}
\setlength{\evensidemargin}{0pt}
\setlength{\topmargin}{0pt}

\usepackage{amsthm,amssymb,setspace}
\usepackage[pagebackref,colorlinks,linkcolor=red,citecolor=blue,urlcolor=blue,hypertexnames=true]{hyperref}
\usepackage{amsrefs}
\usepackage[matrix, arrow]{xy}

\newcommand{\C}{\mathbb C}

\newcommand{\Q}{\mathbb Q}

\newcommand{\Z}{\mathbb Z}

\theoremstyle{plain}
\newtheorem{theorem}{Theorem}[section]
\newtheorem{corollary}[theorem]{Corollary}
\newtheorem{lemma}[theorem]{Lemma}

\theoremstyle{definition}

\theoremstyle{remark}

\newtheorem{remark}[theorem]{Remark}

%%%-------------------------------------------------------------------

\begin{document}

\onehalfspace

\title{Maximal Haagerup subalgebras in $L(\Z^2\rtimes SL_2(\Z))$}

\author{Yongle Jiang}
\address{Yongle Jiang, School of Mathematical Sciences, Dalian University of Technology, Dalian, 116024, China}
\email{yonglejiang@dlut.edu.cn}

\begin{abstract}
We prove that $L(SL_2(\textbf{k}))$ is a maximal Haagerup von Neumann subalgebra in $L(\textbf{k}^2\rtimes SL_2(\textbf{k}))$ for $\textbf{k}=\Q$. Then we show how to modify the proof to handle $\textbf{k}=\Z$. The key step for the proof is a complete description of all intermediate von Neumann subalgebras between $L(SL_2(\textbf{k}))$ and $L^{\infty}(Y)\rtimes SL_2(\textbf{k})$, where $SL_2(\textbf{k})\curvearrowright Y$ denotes the quotient of the algebraic action $SL_2(\textbf{k})\curvearrowright \widehat{\textbf{k}^2}$ by modding out the relation $\phi\sim \phi'$, where $\phi$, $\phi'\in \widehat{\textbf{k}^2}$ and $\phi'(x, y):=\phi(-x, -y)$ for all $(x, y)\in \textbf{k}^2$. As a by-product, we show $L(PSL_2(\Q))$ is a maximal von Neumann subalgebra in $L^{\infty}(Y)\rtimes PSL_2(\Q)$; in particular, $PSL_2(\Q)\curvearrowright Y$ is a prime action, i.e. it admits no non-trivial quotient actions.
\end{abstract}

\subjclass[2010]{Primary 47C15}

\keywords{Haagerup property, Maximal Haagerup von Neumann subalgebras, maximal von Neumann subalgebras, prime actions}

\maketitle

\tableofcontents

\section{Introduction}\label{section: introduction}

Let $N\subset M$ be an inclusion of finite von Neumann algebras. Recall that $N$ is a \textit{maximal Haagerup von Neumann subalgebra} in $M$ if $N$ has Haagerup property \cite{choda_m, jolissaint} and every von Neumann subalgebra of $M$ which contains $N$ as a proper subalgebra does not have Haagerup property. 

In \cite{js}, we initiated the study of maximal Haagerup von Neumann subalgebras. One initial motivation for this study is the hope that this may provide a new angle to study non-Haagerup von Neumann algebras, e.g. (diffuse) von Neumann algebras with property (T) \cite{connes_jones}. As is well known, free group factors and von Neumann algebras with property (T) are arguably two most important classes of von Neumann algebras after the intensive studies of amenable ones \cite{connes_annals, haagerup_31}. For free group factors, one of the first non-trivial structure results on their von Neumann subalgebras is due to Popa. In 1980s, he proved that generator masas in free group factors are maximal amenable \cite{popa_injective}, solving a long-standing open question asked by Kadison. By analogy, one may ask what one can say about maximal Haagerup von Neumann subalgebras inside a given von Neumann algebra with property (T) or more generally a von Neumann algebra without Haagerup property, e.g. $L(\mathbb{Z}^2\rtimes SL_2(\mathbb{Z}))$.

In \cite{js}, we presented several concrete examples of maximal Haagerup von Neumann subalgebras. For example, if $H$ denotes an infinite maximal amenable subgroup of $SL_2(\mathbb{Z})$ containing the matrix $\big(\begin{smallmatrix}
1&1\\
1&2
\end{smallmatrix}\big)$, then $L(\mathbb{Z}^2\rtimes H)$ is maximal Haagerup in $L(\mathbb{Z}^2\rtimes SL_2(\mathbb{Z}))$ \cite[Theorem 3.1]{js}. One key ingredient for this is the dichotomy result on ergodic subequivalence relations for the natural action $SL_2(\mathbb{Z})\curvearrowright \widehat{\mathbb{Z}^2}\cong \mathbb{T}^2$ due to Ioana \cite{ioana_adv}. Distinguished from the above (amenable) subgroup $\mathbb{Z}^2\rtimes H$, $SL_2(\mathbb{Z})$ is another maximal Haagerup subgroup inside $\mathbb{Z}^2\rtimes SL_2(\mathbb{Z})$. In fact, we have classified all maximal Haagerup subgroups of $\Z^2\rtimes SL_2(\Z)$ into two distinct classes in \cite[Theorem 2.12]{js}. According to this classification, each one of the above two subgroups is a typical representative for one class respectively. Therefore, it is natural to ask whether $L(SL_2(\mathbb{Z}))$ is also maximal Haagerup in $L(\mathbb{Z}^2\rtimes SL_2(\mathbb{Z}))$. 

Although the abovementioned question was left open in \cite[Problem 5.3]{js}, we have shown several modified versions of the inclusion $SL_2(\mathbb{Z})\subset\mathbb{Z}^2\rtimes SL_2(\mathbb{Z})$ give rise to maximal Haagerup group von Neumann subalgebras \cite[Corollary 3.6, Corollary 3.9]{js}. In this paper, we can answer this question affirmatively.
\begin{theorem}[Corollary \ref{cor: L(SL_2(Z)) is mHAP}]\label{main thm}
$L(SL_2(\mathbb{Z}))$ is a maximal Haagerup von Neumann subalgebra in $L(\mathbb{Z}^2\rtimes SL_2(\mathbb{Z}))$.
\end{theorem}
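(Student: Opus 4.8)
The plan is to work in the crossed-product picture $M:=L(\Z^2\rtimes SL_2(\Z))\cong L^\infty(\T^2)\rtimes SL_2(\Z)$, with $A:=L^\infty(\T^2)$ and $N:=L(SL_2(\Z))$. Since the pair $(\Z^2\rtimes SL_2(\Z),\Z^2)$ has the relative property (T), the inclusion $A\subset M$ is rigid and $M$ fails the Haagerup property, while $N$ has it because $SL_2(\Z)$ is virtually free; so one must show that every von Neumann subalgebra $P$ with $N\subsetneq P\subseteq M$ fails the Haagerup property. I would use freely that this property passes to von Neumann subalgebras and to direct summands and is invariant under amplifications and finite Jones-index extensions. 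The structural observation driving everything is that $u_{-I}$ is a self-adjoint unitary in the centre of $N$, that the flip $\phi\mapsto\phi'$ is implemented on $M$ by $\mathrm{Ad}(u_{-I})$, and hence that
\[ \widetilde M:=L^\infty(Y)\rtimes SL_2(\Z)=\{u_{-I}\}'\cap M=pMp\oplus(1-p)M(1-p),\qquad p:=\tfrac12(1+u_{-I})\in N. \]
As $\Z^2\rtimes SL_2(\Z)$ is ICC, $M$ is a II$_1$ factor, and since $\tau(p)=\tfrac12$ each corner $pMp,(1-p)M(1-p)$ is an amplification of $M$; thus $\widetilde M$ and both corners fail the Haagerup property, and $p$ is central in $\widetilde M$.

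The key step, as in the abstract, is the classification of all $P$ with $N\subseteq P\subseteq\widetilde M$. Since $p\in N\cap Z(\widetilde M)$, such a $P$ splits as $pPp\oplus(1-p)P(1-p)$ and the classification decouples into the two inclusions $L(PSL_2(\Z))\subseteq L^\infty(Y)\rtimes PSL_2(\Z)=pMp$ and its twist by the $2$-cocycle arising from $1\to\Z/2\to SL_2(\Z)\to PSL_2(\Z)\to1$; that cocycle is a coboundary in $H^2(PSL_2(\Z),\T)$, so the two inclusions are isomorphic and it suffices to treat the first. For it I would (i) determine the $SL_2(\Z)$-invariant von Neumann subalgebras of $L^\infty(\widehat{\Z^2})$: transitivity of $SL_2(\Z)$ on primitive integer vectors makes $\{0\}$ and the dilations $d\Z^2$ ($d\ge1$) the only invariant subgroups of $\Z^2$, and a rigidity argument rules out invariant subalgebras that are not of subgroup type, so the invariant subalgebras of $L^\infty(Y)$ are $\C$ together with the dilation subalgebras, each $\cong L^\infty(Y)$; and (ii) show that every intermediate algebra is a crossed product $B\rtimes PSL_2(\Z)$ over such an invariant $B$, which is the analytic heart of the paper and which I would establish by a Popa--Ioana deformation/rigidity argument for $SL_2(\Z)\curvearrowright\T^2$ — using the relative property (T) of $A\subset M$, the weak mixing and spectral gap of the action, and Ioana's dichotomy for its subequivalence relations \cite{ioana_adv}. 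This yields the list consisting of $L(PSL_2(\Z))$ together with the crossed products of the dilation subalgebras, and every member of the latter is an amplification of $M$. Hence, for any $P$ with $N\subsetneq P\subseteq\widetilde M$, at least one of $pPp,(1-p)P(1-p)$ is an amplification of $M$ and so $P$ fails the Haagerup property. I expect step (ii) to be the main obstacle: the action $SL_2(\Z)\curvearrowright\T^2$ carries no invariant inner product, so one must run the deformation through the specific malleable deformation available here, and it is precisely in passing from $\mathbf k=\Q$ — where transitivity of $SL_2(\Q)$ on $\Q^2\setminus\{0\}$ already gives primeness of $PSL_2(\Q)\curvearrowright Y$ and maximality of $L(PSL_2(\Q))$ in $L^\infty(Y)\rtimes PSL_2(\Q)$ — to $\mathbf k=\Z$ that the dilation subalgebras appear and Ioana's dichotomy must be invoked.

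To finish I would climb from $\widetilde M$ to $M$. Let $N\subsetneq P\subseteq M$ and put $Q:=P\cap\widetilde M$. If $Q\supsetneq N$, then by the classification $Q$ fails the Haagerup property, hence so does $P\supseteq Q$. If $Q=N$, then since $u_{-I}\in N\subseteq P$ the map $x\mapsto\tfrac12(x+u_{-I}xu_{-I})$ sends $P$ into $P\cap\widetilde M=N$ and fixes $N$, so it is the trace-preserving conditional expectation $P\to N$; consequently $L^2(P)$ is the orthogonal sum of $L^2(N)$ and the closure of $P_-:=\{x\in P:u_{-I}xu_{-I}=-x\}$, and $P_-\ne0$. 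One checks $P_-P_-\subseteq N$ and $NP_-\cup P_-N\subseteq P_-$; in particular $u_{-I}\notin Z(P)$, and since $N'\cap M=\C 1\oplus\C u_{-I}$ this forces $Z(P)=\C$, so $P$ is a II$_1$ factor. Using $\tau(p)=\tfrac12$, pick a partial isometry $w\in P$ with $w^*w=p$, $ww^*=1-p$; then $v:=w+w^*\in P$ is a self-adjoint unitary satisfying $vu_{-I}v=-u_{-I}$, hence $v\in P_-$, and from $P_-P_+P_-\subseteq P_-P_-\subseteq N$ one sees $v$ normalizes $N$. But $SL_2(\Z)\curvearrowright\T^2$ is essentially free and weakly mixing, so $N=L(SL_2(\Z))$ is singular in $M$, i.e.\ $\mathcal N_M(N)''=N\subseteq\widetilde M$ \cite{ioana_adv}; thus $v\in\widetilde M$, contradicting $0\ne v\in P_-$. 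Therefore $P=N$, a contradiction. In every case $P$ fails the Haagerup property, proving the theorem.
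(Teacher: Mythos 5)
Your proposal reproduces the paper's architecture — pass to the index-two subalgebra $\widetilde M=\{u_{-I}\}'\cap M=L^\infty(Y)\rtimes SL_2(\Z)$, classify the intermediate subalgebras of $L(SL_2(\Z))\subset\widetilde M$, kill the Haagerup property of the nontrivial ones via relative property (T), then climb back to $M$ — but the central step (ii), which you yourself flag as the main obstacle, is a genuine gap rather than a proof. The statement that every von Neumann algebra between $L(PSL_2(\Z))$ and $L^\infty(Y)\rtimes PSL_2(\Z)$ is of the form $B_0\rtimes PSL_2(\Z)$ for an invariant subalgebra $B_0$ is exactly Theorem \ref{thm: PSL_2(Z)-thm}, and the tools you name do not reach it. Ioana's dichotomy for subequivalence relations of $SL_2(\Z)\curvearrowright\T^2$ governs intermediate subalgebras \emph{containing the Cartan} $L^\infty(\T^2)$ (equivalently, subequivalence relations of the orbit equivalence relation); an intermediate algebra containing $L(SL_2(\Z))$ need not contain any part of the Cartan, and no deformation/rigidity argument of Popa--Ioana type is known to force such an algebra into crossed-product form here. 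The paper does something entirely different and elementary-but-laborious: it solves for the Fourier coefficients of $E(u_{ke_1}+u_{-ke_1})$ for \emph{every} $k$ by induction (the induction being forced precisely because $SL_2(\Z)$ is not transitive on $\Z^2\setminus\{0\}$), using the two families of identities $\phi(E(x))=E(\phi(x))$ for $\phi={\rm Ad}(u_g)$ and $E(E(x)y)=E(x)E(y)$, and only afterwards invokes Packer/Suzuki together with the classification of factors of $SL_2(\Z)\curvearrowright\T^2$ from \cite{js}. Without an actual proof of the classification, your argument establishes only the reduction, not the theorem. (A smaller caveat: untwisting the corner $(1-q)M(1-q)$ via $H^2(PSL_2(\Z),\T)=0$ must be checked to respect $L(PSL_2(\Z))$ and $L^\infty(Y)$ simultaneously; the paper instead treats both corners inside one coefficient computation, and its answer genuinely allows different dilation parameters $n,m$ on the two corners.)

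The parts surrounding the gap are sound, and your final climbing step is a legitimate alternative to the paper's. Where the paper applies Jones--Xu to get $[P:P\cap\widetilde M]<\infty$ and then Ioana's quasi-normalizer computation, you handle the case $P\cap\widetilde M=L(SL_2(\Z))$ by producing a self-adjoint unitary $v=w+w^*\in P$ with $vu_{-I}v=-u_{-I}$ that normalizes $L(SL_2(\Z))$; the computations $vNv\subseteq P_-P_-\subseteq P\cap\widetilde M=N$ and $vpv=1-p$ are correct, and singularity of $L(SL_2(\Z))$ in $M$ does follow from weak mixing of $SL_2(\Z)\curvearrowright\T^2$ (via the quasi-normalizer statement in \cite{ioana_duke}, which is the reference you want rather than \cite{ioana_adv}). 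So the ending is fine; the theorem remains unproved because the classification it rests on is only asserted.
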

This theorem may be thought of as the counterpart of Popa's result \cite{popa_injective} after shifting our attention away from maximal amenability and concentrate on maximal Haagerup property. Nevertheless, our proof is different from Popa's which relies on a rather fine analysis on certain relative commutants in the ultrapower of the ambient algebra. Instead, our proof explores a rigid feature on certain von Neumann subalgebras containing $L(SL_2(\Z))$.

Next, let us briefly describe the strategy for the proof. For ease of notation, we take the inclusion $LH\subset LG$ for example to explain the method. Same method also works for the inclusion $LG\subset L^{\infty}(X)\rtimes G$ for certain p.m.p. action $G\curvearrowright X$. 

Let $N:=LH\subset P\subset LG:=M$ be inclusions of von Neumann algebras. Denote by $\tau$ the canonical trace on $LG$, and $E: (LG, \tau)\twoheadrightarrow (P, \tau|_P)$ the trace preserving conditional expectation. 
To prove the above theorem, the natural idea is to completely determine $P$, which is essentially equivalent to determining $\{E(u_g): g\in G\}$. To do this, we think of $\{E(u_g): g\in G\}$ as a set of unknowns and try to find sufficiently many equations which involve these unknowns. In this paper, the following equations are used:

(1) $\phi(E(u_g))=E(\phi(u_g))$ for all $g\in G$, where $\phi\in Aut(LG, P)$, i.e. $\phi$ is an automorphism of $LG$ which fixes $P$ as a set globally, e.g. $\phi=Ad(u)$ for any unitary $u$ in $LH$. 

(2) $E(E(u_s)u_t)=E(u_s)E(u_t)$ for all $s$, $t\in G$.   

Historically speaking, the use of (1) to solve for $\{E(u_g): g\in G\}$ has already appeared in several works \cite{choda_h, chifan_das, js, jiang}. Meanwhile, variations of (1) when dealing with $M=L^{\infty}(X)\rtimes G$ have also been used in \cite{packer, chifan_das, js}. By contrast, it seems the use of (2) has not received much attention besides in \cite{haga,cs_internat, js, jiang}. 
Our strategy is to first locate certain $u_g^*E(u_g)$ inside a small enough von Neumann subalgebra by (1) and then use (2) to get sufficiently many hidden relations among these unknowns to completely solve for them. Similar idea has been applied in proving \cite[Proposition 5.6]{jiang}. %For a quick reason why (2) works in our context, see Remark \ref{remark: why bimodular property works}.

Besides the use of (2) above, we also need two more ingredients for the proof.

First, we will first study the inclusion $L(SL_2(\mathbb{Q}))\subset L(\mathbb{Q}^2\rtimes SL_2(\mathbb{Q}))$ rather than the $\mathbb{Z}$-coefficient inclusion. The reason is that the affine action $\mathbb{\textbf{k}}^2\rtimes SL_2(\textbf{k})\curvearrowright \textbf{k}^2$ is 2-transitive (equivalently, $SL_2(\textbf{k})\curvearrowright \textbf{k}^2\setminus\{(0, 0)\}$ is transitive, see \cite[Def. 4.2]{jiang}) for $\textbf{k}=\mathbb{Q}$ but not for $\textbf{k}=\mathbb{Z}$. This will make the calculation while trying to solve for the unknowns much easier for $\textbf{k}=\mathbb{Q}$. Consequently, the analogue of the above theorem for $\Q$-coefficient also holds true, see Corollary \ref{cor: L(SL_2(Q)) is mHAP}.

Second, for both coefficients, i.e. $\textbf{k}=\mathbb{Q}$ or $\mathbb{Z}$, we are not able to determine all intermediate von Neumann subalgebras in the ambient algebra $L(\textbf{k}^2\rtimes SL_2(\textbf{k}))$ directly by the above strategy. Instead, we show it works perfectly after restricting to a ``large" von Neumann subalgebra (denoted by $M_0$), i.e. the crossed product coming from the quotient action of the algebraic action $SL_2(\textbf{k})\curvearrowright \widehat{\textbf{k}^2}$ by modding out the relation $\phi\sim \phi'$, where $\phi$, $\phi'\in \widehat{\textbf{k}^2}$ and $\phi'(x, y):=\phi(-x, -y)$ for all $(x, y)\in \textbf{k}^2$. Although the necessity of taking this restriction is unclear to us, it does help solving for the unknowns.

Note that the abovementioned subalgebra $M_0$ has Pimsner-Popa index \cite{pimsner_popa} two inside $L(\textbf{k}^2\rtimes SL_2(\textbf{k}))$. The key result we get is a complete description of all von Neumann subalgebras containing $L(SL_2(\textbf{k}))$ while sitting in $M_0$, i.e. Theorem \ref{thm: complete description of intermediate vn algs} (for $\textbf{k}=\Q$) and Theorem \ref{thm: SL_2(Z)-thm} (for $\textbf{k}=\Z$).  With this description at hand, Theorem \ref{main thm} can be proved using a standard argument based on the works \cite{chifan_das, ioana_duke, jones_xu}, see the proof of Corollary \ref{cor: L(SL_2(Q)) is mHAP}, Corollary \ref{cor: L(SL_2(Z)) is mHAP}.
 
In view of \cite[Proposition 2.19]{js}, it might be interesting to study whether the group von Neumann algebra of upper triangular matrices in $SL_3(\Z)$ is maximal Haagerup in $L(SL_3(\Z))$.\\

\paragraph*{\textbf{Organization of the paper:}} In Section \ref{section: preliminaries}, we briefly review several notions, including Haagerup property, relative property (T), algebraic actions, weak mixing and compactness. We study the $\Q$-coefficient inclusion $L(SL_2(\Q))\subset  L(\Q^2\rtimes SL_2(\Q))$ in Section \ref{section: Q-case}, which is split into two subsections. In subsection \ref{subsection: Q-case, factor}, we consider the easier case: inclusion of factors $L(PSL_2(\Q))\subset L^{\infty}(Y)\rtimes PSL_2(\Q)$ and show no other non-trivial intermediate von Neumann subalgebras  exist (Theorem \ref{thm: no intermediate subalgs}). Then all intermediate von Neumann subalgebras between $L(SL_2(\Q))$ and $L^{\infty}(Y)\rtimes SL_2(\Q)$ are determined (Theorem \ref{thm: complete description of intermediate vn algs}) in subsection \ref{subsection: Q-case, non-factor}. Moreover, we deduce three corollaries (Corollary \ref{cor: weakly mixing prime action}, \ref{cor: max Haagerup for LG}, \ref{cor: L(SL_2(Q)) is mHAP}) in this section. These corollaries show that $PSL_2(\Q)\curvearrowright Y$ is a free, weakly mixing and prime action and $L(PSL_2(\Q))$ (resp. $L(SL_2(\Q))$) is maximal Haagerup inside $L^{\infty}(Y)\rtimes PSL_2(\Q)$ (resp. $L(\Q^2\rtimes SL_2(\Q))$). In Section \ref{section: Z-case}, we show how to modify the proof in previous section to deal with the $\Z$-coefficient inclusion $L(SL_2(\Z))\subset L(\Z^2\rtimes SL_2(\Z))$. Theorem \ref{main thm} is proved in this section. Then the paper is finished with an appendix, where we include details for the induction step in the proof of Theorem \ref{thm: SL_2(Z)-thm}, which describes all intermediate von Neumann subalgebras between $L(SL_2(\Z))$ and $L^{\infty}(Y)\rtimes SL_2(\Z)$.\\

\paragraph*{\textbf{Notations:}} The following notations will be used in the context.
\begin{itemize}
\item For a p.m.p. action $G\curvearrowright (X, \mu)$, $ker(G\curvearrowright X)$ denotes the kernel of the action, i.e. $ker(G\curvearrowright X):=\{g\in G: gx=x, ~\forall~\mu-a.e.~x\in X\}$.
%\item For a sentence $S$, $\delta_S$ or $\delta(S)$ denotes the function which takes value 1 if $S$ holds true and 0 otherwise. 
\item Let $N\subset M$ be an inclusion of von Neumann algebras. If $\mathcal{A}\subset Aut(M)$ is a set, then $N^{\mathcal{A}}:=\{x\in N: \alpha(x)=x,~ \forall~\alpha\in \mathcal{A}\}$. If $\mathcal{A}=\{\phi\}$, we also write $Fix(\phi)$ for $M^{\mathcal{A}}$.
\end{itemize}
\section{Preliminaries}\label{section: preliminaries}
\subsection{Haagerup property v.s. relative property (T)} Haagerup property originated in the work of Haagerup on free groups \cite{haagerup}. Later on, this approximation property was proved to be very fruitful for the study of operator algebras. In particular, this property was defined for finite von Neumann algebras in \cite{choda_m, jolissaint}. We will not use its definition directly in this paper, for which we refer to \cite{ccjjv}, \cite[Section 1]{js}. Instead, let us recall that a key obstacle for the Haagerup property in both the group setting and von Neumann algebra setting is the relative property (T) \cite{bdv, connes_jones, popa_annals}. More precisely, we will frequently use two standard facts: (1) If a group $G$ contains an infinite subgroup with relative property (T), then it does not have Haagerup property. For example, $\Z^2\rtimes SL_2(\Z)$ does not have Haagerup property as $\Z^2$ is a subgroup with relative property (T) \cite{margulis}. (2) If $N$ is a diffuse von Neumann subalgebra inside a finite von Neumann algebra $M$ and $N\subset M$ has relative property (T), then $M$ does not have Haagerup property \cite{popa_annals}.
\subsection{Algebraic actions}
Let $(X, \mu)$ be a compact metrizable abelian group equipped with the Haar measure $\mu$ and $\alpha: G\to Aut(X)$ be a group homomorphism from a countable discrete group $G$ to the continuous automorphism group $Aut(X)$. Then $\alpha: G\curvearrowright (X, \mu)$ is called an \emph{algebraic action}. Notice that the Pontryagin dual $\widehat{X}$ inherits a $G$-module structure. Conversely, given a countable $\Z G$-module $M$, it induces an algebraic action $G\curvearrowright \widehat{M}$ defined by $\langle g\chi, m\rangle:=\langle \chi, g^{-1}m\rangle$ for all $g\in G$, $\chi\in \widehat{M}$ and all $m\in M$, where $\langle \cdot, \cdot\rangle$ denotes the pairing between $\widehat{M}$ (the Pontryagin dual of $M$) and $M$. In this paper, we work with the algebraic action $SL_2(\textbf{k})\curvearrowright\widehat{\textbf{k}^2}$ for $\textbf{k}=\Q$ and $\Z$, where $\textbf{k}^2$ is treated as an $SL_2(\textbf{k})$-module defined by matrix multiplication from the left. A basic fact we frequently use is that we have an isomorphism $L(M\rtimes G)\cong L^{\infty}(\widehat{M})\rtimes G$ for an algebraic action $G\curvearrowright \widehat{M}$. For more discussion on algebraic actions, see \cite{schmidt_book, kerrli_book}.
\subsection{Weak mixing v.s. compactness} Let $G\curvearrowright (X,\mu)$ be a p.m.p. (probability-measure preserving) action. Recall that it is called \emph{weakly mixing} if for every finite collection $\Omega$ of measurable subsets of $X$ and every $\epsilon>0$ there exists an $s\in G$ such that $|\mu(sA\cap B)-\mu(A)\mu(B)|<\epsilon$ for all $A$, $B\in \Omega$. Several conditions are known to be equivalent to being weakly mixing \cite[Theorem 2.25]{kerrli_book}, one of which is to require the only compact elements in $L^2(X)$ under the Koopman representation (i.e. the unitary representation $G\curvearrowright L^2(X)$ defined by $(sf)(x):=f(s^{-1}x)$ for all $s\in G$ and a.e. $x\in X$) are the a.e. constant functions. Here, recall that for a unitary representation $\pi: G\curvearrowright \mathcal{H}$, an element $\xi\in \mathcal{H}$ is called \emph{compact} if the set $\overline{\pi(G)\xi}$ is compact. For a p.m.p. action $G\curvearrowright (X, \mu)$, it is called \emph{compact} if its Koopman representation $\pi$ is compact, i.e. for every $\xi\in L^2(X)$, $\xi$ is compact. Clearly, for a non-trivial p.m.p. action $G\curvearrowright (X, \mu)$, if it is weakly mixing, then it is not compact and every non-trivial quotient action is still weakly mixing.

%\paragraph{Quasinormalizers}

\section{Complete description of intermediate von Neumann subalgebras: $\Q$-coefficient}\label{section: Q-case}

In this section, we work with the $\mathbb{Q}$-coefficient inclusion: $L(SL_2(\mathbb{Q}))\subset L(\mathbb{Q}^2\rtimes SL_2(\mathbb{Q}))$.

Let $G=SL_2(\mathbb{Q})$ and $\bar{G}=PSL_2(\mathbb{Q})$. Let $G\curvearrowright X$ be the algebraic action $G\curvearrowright \widehat{\mathbb{Q}^2}$. Consider the quotient action $G\curvearrowright Y$, where $Y$ is defined by modding out the relation $\phi\sim \phi'$, where $\phi$, $\phi'\in X=\widehat{\mathbb{Q}^2}$ and $\phi'(x, y):=\phi(-x,-y)$ for all $(x, y)\in \mathbb{Q}^2$.
In other words, $L^{\infty}(Y)\rtimes G\cong A\rtimes G$, where $A$ denotes the von Neumann subalgebra of $L(\mathbb{Q}^2)$ consisting of all elements $\sum_{x, y}c_{x, y}u_{x, y}$ such that $c_{x, y}=c_{-x, -y}$ for all $(x, y)\in \mathbb{Q}^2$.
Note that $-id\in ker(G\curvearrowright Y)$, where $id $ stands for the identity matrix in $G$. Therefore, $G\curvearrowright Y$ descends to an action $\bar{G}\curvearrowright Y$. Note that these notations will be used throughout this section unless otherwise stated.

The main result in this section is the following theorem.

\begin{theorem}\label{thm: complete description of intermediate vn algs}
With the above notations, let $P$ be any intermediate von Neumann subalgebra between $L(G)$ and $A\rtimes G$, then 
\[P\in \bigg\{L(G), A\rtimes G, qL(G)\oplus (1-q)(A\rtimes G), (1-q)L(G)\oplus q(A\rtimes G)\bigg\},\] where $q=\frac{u_{id}+u_{-id}}{2}$ is a central projection in $A\rtimes G$. 
\end{theorem}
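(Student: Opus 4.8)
The plan is to exploit the central projection $q$ to break the inclusion into two ``corners'', dispose of the untwisted corner using the already-proved factor case (Theorem \ref{thm: no intermediate subalgs}), and handle the remaining twisted corner by rerunning that same argument.

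The first observation is that $q$ — and hence $1-q$ — automatically lies in $P$: since $-id\in SL_2(\Q)=G$ we have $u_{-id}\in L(G)\subseteq P$, so $q=\frac{u_{id}+u_{-id}}{2}\in L(G)\subseteq P$. As $q$ is central in $M:=A\rtimes G$, we get a direct-sum splitting $P=qP\oplus(1-q)P$ in which $qP$ is an intermediate von Neumann algebra for $qL(G)\subseteq qM$ and $(1-q)P$ is one for $(1-q)L(G)\subseteq(1-q)M$. Thus it suffices to list the intermediate von Neumann algebras of each of these two inclusions; the four ways of recombining them produce exactly the four algebras in the statement, each of which is visibly intermediate between $L(G)$ and $A\rtimes G$.

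For the $q$-corner: since $-id$ is central in $G$ and acts trivially on $Y$, the assignment $qu_g\mapsto u_{\bar g}$, $qa\mapsto a$ (for $g\in G$, $a\in A$, $\bar g$ the image of $g$ in $\bar G$) extends to a trace-preserving $*$-isomorphism identifying $qL(G)\subseteq qM$ with $L(\bar G)\subseteq L^\infty(Y)\rtimes\bar G$; both are $\mathrm{II}_1$ factors ($\bar G$ is ICC and $\bar G\curvearrowright Y$ is free ergodic, as already needed for $L^\infty(Y)\rtimes\bar G$ to be a factor in Theorem \ref{thm: no intermediate subalgs}). Theorem \ref{thm: no intermediate subalgs} then gives $qP\in\{qL(G),qM\}$. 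For the $(1-q)$-corner, which is the heart of the matter: here $(1-q)u_{-id}=-(1-q)$, hence $(1-q)u_{-g}=-(1-q)u_g$, so writing $\sigma\colon\bar G\times\bar G\to\{\pm1\}$ for the $2$-cocycle of the central extension $1\to\{\pm id\}\to SL_2(\Q)\to PSL_2(\Q)\to1$, the elements $(1-q)u_g$ generate a copy of the twisted group algebra $L_\sigma(\bar G)$, and $(1-q)L(G)\subseteq(1-q)M$ is identified with $L_\sigma(\bar G)\subseteq L^\infty(Y)\rtimes_\sigma\bar G$ (the cocycle enters only the group part, because $-id$ acts trivially on $Y$). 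A center computation, exactly as before, shows both are $\mathrm{II}_1$ factors. I claim the only intermediate von Neumann algebras for this inclusion are the two endpoints, obtained by repeating the proof of Theorem \ref{thm: no intermediate subalgs}: that argument analyzes $E_{(1-q)P}\big((1-q)a\big)$ for $a\in A$ — which, since $(1-q)u_g\in(1-q)L(G)\subseteq(1-q)P$, determines $E_{(1-q)P}$ — via the identities (1)--(2) of the introduction, using the weak mixing of $\bar G\curvearrowright Y$ and the transitivity of $SL_2(\Q)$ on $\Q^2\setminus\{(0,0)\}$; none of this sees $\sigma$, since $\sigma$ is scalar-valued and $Ad((1-q)u_g)$ coincides with $Ad(u_g)$ on $(1-q)M$, so identity (1) for these inner automorphisms is unchanged while identity (2) is merely the bimodule property of the conditional expectation. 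This yields $(1-q)P\in\{(1-q)L(G),(1-q)M\}$, and recombining the two corners finishes the proof.

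The step I expect to be the real obstacle is this last one: verifying that no part of the proof of Theorem \ref{thm: no intermediate subalgs} implicitly used triviality of $\sigma$. If the transfer is not literally verbatim, the fallback is the hands-on route: for $v\in\Q^2\setminus\{(0,0)\}$ set $a_v:=u_v+u_{-v}\in A$; apply identity (1) with the automorphisms $Ad((1-q)u_h)$ for $h$ stabilizing $\{v,-v\}$ to place $E_{(1-q)P}\big((1-q)a_v\big)$ inside the relative commutant of that stabilizer, then use identity (2) to squeeze it into $A\cap(1-q)P$; since all $a_v$ are $G$-conjugate by transitivity, this pins down $E_{(1-q)P}$ and forces $A\cap(1-q)P$ to be $G$-invariant, so that $(1-q)P=\big(A\cap(1-q)P\big)\rtimes_\sigma\bar G$ for a $\bar G$-invariant subalgebra of $A$, which (by primeness of $\bar G\curvearrowright Y$, Corollary \ref{cor: weakly mixing prime action}) is $\C$ or all of $A$. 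Carrying out this computation, with whatever cocycle bookkeeping it requires, is where the technical work sits.
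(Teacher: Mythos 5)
Your proposal is correct in outline, and its first half --- $q\in L(G)\subseteq P$ is central in $A\rtimes G$, so $P=qP\oplus(1-q)P$ with $qP\in\{qL(G),q(A\rtimes G)\}$ via the identification $q(A\rtimes G)\cong L^{\infty}(Y)\rtimes \bar{G}$ and Theorem \ref{thm: no intermediate subalgs} --- is exactly what the paper does. Where you genuinely diverge is the $(1-q)$-corner. The paper never passes to the twisted crossed product $L_\sigma(\bar G)\subset L^{\infty}(Y)\rtimes_\sigma\bar G$: it stays inside the untwisted algebra $L(\Q^2\rtimes G)$, writes $c_{e_1}=u_{e_1}^*E(u_{e_1})$ with two families of unknowns $\lambda_{x,y}$, $\mu_{x,y}$ (the $+id$ and $-id$ components of the $SL_2$-part), and crucially feeds the already-determined $qP$ back in to obtain the linear relations (\ref{eq: relation between lambda and mu by symmetry}) between $\lambda$ and $\mu$, without which the quadratic system (\ref{system of equations: eq3-eq4})--(\ref{system of equations: eq7-eq8}) does not close. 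Your route decouples the corners and reruns the factor-case computation in the twisted corner; this should work and is conceptually tidier, but your justification that the transfer is ``verbatim'' is not quite right. Identities (1) and (2) are indeed insensitive to the scalar cocycle, but the coefficient extraction is not: the reduced Fourier basis of $(1-q)(A\rtimes G)$ satisfies $(1-q)u_vu_g=-(1-q)u_{-v}u_{-g}$, whereas $qu_vu_g=+qu_{-v}u_{-g}$, so the middle case of (\ref{eq: eq1-6}) --- the matching through $-id$, i.e.\ $2a+bx=0$, which the paper notes is exactly the case created by working over $\bar G$ --- acquires a minus sign in the twisted corner. This turns out to be harmless (the only invocation of that case reads $\lambda_{x,-2}^2=\pm\lambda_{-1,0}\lambda_{x,2}=0$ since $\lambda_{x,2}=0$ is already known), but it is precisely the verification you deferred, and your fallback route likewise reduces to the same computation (showing $E((1-q)A)\subseteq(1-q)A$ before any Packer--Suzuki-type argument can be applied). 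In short: same decomposition, genuinely different and arguably cleaner execution of the hard half, at the cost of one sign check that your write-up leaves implicit; the paper's doubled-coefficient bookkeeping is the price it pays to avoid exactly that check.
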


To prove this, we need to first study $\bar{G}=PSL_2(\mathbb{Q})\curvearrowright Y$.

%\subsection{$\mathbb{Q}$-coefficients}\label{section: Q-coefficients}

\begin{theorem}\label{thm: no intermediate subalgs}
Let $P$ be an intermediate von Neumann subalgebra between $L(\bar{G})$ and $L^{\infty}(Y)\rtimes \bar{G}$. Then $P=L(\bar{G})$ or $L^{\infty}(Y)\rtimes \bar{G}$. In other words, $L(\bar{G})$ is a maximal von Neumann subalgebra in $L^{\infty}(Y)\rtimes \bar{G}$.
\end{theorem}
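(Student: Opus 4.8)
The plan is to run the strategy of the introduction — treat the conditional expectation $E\colon M\to P$ as a system of unknowns and close it using equations (1) and (2) — in the present situation, where everything in sight is a factor. Write $N=L(\bar G)\subseteq P\subseteq M=L^{\infty}(Y)\rtimes\bar G\cong A\rtimes\bar G$, with $A$ the even subalgebra of $L(\Q^2)$, generated by the $\chi_v:=u_v+u_{-v}$ ($v\in\Q^2$; note $\chi_v\chi_w=\chi_{v+w}+\chi_{v-w}$), and let $E\colon M\to P$ and $E_N\colon M\to N$ be the $\tau$-preserving conditional expectations. I would first record the facts that power everything: $\bar G\curvearrowright Y$ is faithful (its kernel $\{\pm\,\id\}$ was noted above) and in fact free (a short computation with the closed subgroups $\{\phi:g\phi=\pm\phi\}$ of $\widehat{\Q^2}$), it is ergodic, and it is weakly mixing because $SL_2(\Q)\curvearrowright\widehat{\Q^2}$ is weakly mixing (no nonzero $v\in\Q^2$ has finite $SL_2(\Q)$-orbit, as $SL_2(\Q)$ acts transitively on $\Q^2\setminus\{(0,0)\}$); moreover $PSL_2(\Q)$ is ICC. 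Hence $N$ and $M$ are $\mathrm{II}_1$ factors, $N'\cap M=\C$, and $A^{\bar G}=\C$. Since the affine action $\Q^2\rtimes SL_2(\Q)\curvearrowright\Q^2$ is $2$-transitive, every $v\ne(0,0)$ is of the form $ke_1$ with $k\in\bar G$, $e_1=(1,0)$, so $E(\chi_v u_g)=u_k\,E(\chi_{e_1}u_{k^{-1}g})$; as also $E|_N=\id$ and $\operatorname{span}\{\chi_v u_g\}$ is weak-$*$ dense in $M$, the whole of $E$ — hence $P$ — is determined by the family $F(g):=E(\chi_{e_1}u_g)\in M$, $g\in\bar G$, which are the unknowns to solve for.

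Step 1 (use (1) to localize the $F(g)$). For $k\in\bar G$ we have $u_k\in N\subseteq P$, so $Ad(u_k)\in Aut(M,P)$ and $E\circ Ad(u_k)=Ad(u_k)\circ E$; applied to $\chi_{e_1}u_g$ this gives $u_kF(g)u_k^{*}=E(\chi_{ke_1}u_{kgk^{-1}})$, which for $k$ in $\bar H:=\operatorname{Stab}_{\bar G}(e_1)$ — the copy of $\Q$ inside $PSL_2(\Q)$ coming from the unipotent upper-triangular matrices — becomes $u_kF(g)u_k^{*}=F(kgk^{-1})$. Writing $F(g)=\sum_{h\in\bar G}c_h^{\,g}u_h$ with $c_h^{\,g}\in A$ and $\sum_h\|c_h^{\,g}\|_2^{2}\le\|\chi_{e_1}\|_2^{2}=2$, this $\bar H$-equivariance makes $h\mapsto\|c_h^{\,g}\|_2$ constant along $\bar H$-conjugation orbits; since $\Q$ has no proper finite-index subgroup, for $g\in C_{\bar G}(\bar H)=\bar H$ the only finite such orbits are singletons inside $\bar H$, so $F(g)\in A\rtimes\bar H$ for $g\in\bar H$, and an analogous treatment of the centralizers of hyperbolic and elliptic elements (infinite tori, respectively finite subgroups) localizes $u_g^{*}F(g)$ into a correspondingly ``small'' subalgebra for the remaining $g$. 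One may further squeeze (1) for general $u\in\cU(N)$ through the minimal-$\|\cdot\|_2$-norm vector of $\overline{\operatorname{conv}}\{uF(g)u^{*}:u\in\cU(N)\}$, which must lie in $N'\cap L^{2}(M)=\C$ and equals $\tau(\chi_{e_1}u_g)=0$, so as to kill the traces of the surviving coefficients.

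Step 2 (use (2) to close the system). With the $F(g)$ localized I would substitute them into $E(E(\chi_{e_1}u_s)\,\chi_{e_1}u_t)=E(\chi_{e_1}u_s)E(\chi_{e_1}u_t)=F(s)F(t)$: expanding $F(s)\,\chi_{e_1}u_t$ in its (now short) Fourier series and using that, by Step~1 together with $2$-transitivity, every $E(b\,u_r)$ with $b\in A$ is determined by the family $(F(g))_g$, one obtains relations among the $c_h^{\,g}$; the specializations $t=s^{-1}$ and $t=s$ should force the surviving coefficients into the uniform shape $F(g)=p\,\chi_{e_1}u_g$ with a single projection $p$, which by the $\bar G$-equivariance of Step~1 lies in $A^{\bar G}=\C$, hence $p\in\{0,1\}$: either $F\equiv0$, so $E=E_N$ and $P=N$, or $F(g)=\chi_{e_1}u_g$ for all $g$, so $E=\id$ and $P=M$. (Factoriality of $M$, i.e.\ triviality of its center, is exactly what rules out the ``direct sum'' solutions that do occur in Theorem~\ref{thm: complete description of intermediate vn algs}.) The step I expect to be the genuine obstacle is Step~2: organizing the relations coming from multiplicativity of $E$ on the algebra $P$ into a system that actually pins down the $c_h^{\,g}$, and carrying the case distinction by conjugacy type of $g$ in $PSL_2(\Q)$ all the way through it; a secondary technical point is making the localization of Step~1 effective when the relevant centralizer is an infinite torus rather than a finite or cyclic subgroup.
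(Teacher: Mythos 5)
Your overall strategy is the one the paper follows (localize the unknowns via $Ad(u_k)$ for $u_k\in L(\bar G)$, then close the system with bimodularity of $E$), but as written the proposal has two problems, one structural and one that is a genuine gap. Structurally, you treat $F(g)=E(\chi_{e_1}u_g)$ for every $g\in\bar G$ as an independent unknown and then try to localize each one using the centralizer of $g$. You are missing the simplification that, since $u_g\in L(\bar G)\subseteq P$ and $E$ is $P$-bimodular, $F(g)=E(\chi_{e_1})u_g$: there is only \emph{one} unknown, $E(\chi_{e_1})$, and the whole discussion of hyperbolic and elliptic conjugacy classes is unnecessary. Worse, that discussion would not actually work as described: the relation $u_kF(g)u_k^{*}=F(kgk^{-1})$ gives a self-constraint on $F(g)$ only for $k\in \operatorname{Stab}_{\bar G}(e_1)\cap C_{\bar G}(g)$, which is trivial for hyperbolic $g$, so the promised localization for those $g$ is unavailable by your method. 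Also, your localization of $F(e)$ into $A\rtimes\bar H$ only constrains the group part of the Fourier expansion; the paper's Claim~1 additionally pins the $A$-part down to $A\cap L(\Q e_1)$ (using that $r_{nx}.v\neq r_{mx}.v$ for $v\notin\Q e_1$), and this stronger localization is what makes the subsequent coefficient bookkeeping finite and tractable.

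The genuine gap is Step 2, which you yourself flag as "the genuine obstacle": you assert that suitable specializations "should force" $F(g)=p\,\chi_{e_1}u_g$ with $p$ a projection, but this is precisely where all the work lies and none of it is carried out. In the paper, the single identity $E\bigl(E(\chi_{e_1})\chi_{e_2}\bigr)=E(\chi_{e_1})E(\chi_{e_2})$ (with $\chi_{e_2}=u_g\chi_{e_1}u_g^{*}$ for the rotation $g$) is expanded in Fourier coefficients on both sides; the crucial point — which your proposal does not engage with — is that an element of $\bar G$ admits an essentially unique factorization as (upper unitriangular)$\times$(lower unitriangular), so the coefficient comparison yields a clean three-case system of quadratic equations in the $\lambda_{x,y}$, and then a sequence of ad hoc specializations (e.g.\ $(a,b)=(x,y)$, then $b=y=-2$, then exploiting $\lambda_{ax,0}=\lambda_{x,0}$ together with square-summability) kills all coefficients except $\lambda_{\pm1,0}$, after which idempotency of $E$ gives $\lambda_{1,0}\in\{0,1\}$. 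Without deriving and solving that system, the proof is not complete; the soft ingredients you list (factoriality, $A^{\bar G}=\C$, weak mixing) do not by themselves rule out an intermediate $P$.
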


To the best of our knowledge, Theorem \ref{thm: no intermediate subalgs} gives the first concrete inclusion of von Neumann algebras of the form $LH\subset L^{\infty}(X)\rtimes H$ such that $H\curvearrowright X$ is a free, weakly mixing p.m.p. action and all intermediate von Neumann subalgebras can be described. Note that several similar results have appeared recently while assuming $H\curvearrowright X$ is a non-faithful action \cite{amrutam, js}, profinite action \cite{js, chifan_das}, or more generally a compact action \cite{chifan_das}.

To present a direct corollary, which may be of independent interest, we first recall that for a p.m.p. action, it is called \emph{prime} if it admits no non-trivial quotient actions. We are unaware of any other concrete free prime actions of any non-amenable groups besides the one in the following corollary.

\begin{corollary}\label{cor: weakly mixing prime action}
With the above notations, $\bar{G}\curvearrowright Y$ is a free, weakly mixing and prime action.
\end{corollary}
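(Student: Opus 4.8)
The plan is to deduce Corollary \ref{cor: weakly mixing prime action} from Theorem \ref{thm: no intermediate subalgs} together with the general structure of $L^\infty(Y)\rtimes\bar G$, by translating each of the three desired properties of the action $\bar G\curvearrowright Y$ into a statement about the inclusion $L(\bar G)\subset L^\infty(Y)\rtimes\bar G$.

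\textbf{Primeness.} This is essentially immediate from the maximality. Any quotient action $\bar G\curvearrowright Z$ of $\bar G\curvearrowright Y$ corresponds to a $\bar G$-invariant von Neumann subalgebra $L^\infty(Z)\subset L^\infty(Y)$, and hence to an intermediate von Neumann subalgebra $L(\bar G)\subset L^\infty(Z)\rtimes\bar G\subset L^\infty(Y)\rtimes\bar G$. By Theorem \ref{thm: no intermediate subalgs} this intermediate algebra is either $L(\bar G)$ (forcing $L^\infty(Z)=\C$, i.e. the trivial quotient) or all of $L^\infty(Y)\rtimes\bar G$ (forcing $L^\infty(Z)=L^\infty(Y)$, i.e. the quotient is the identity, using that the crossed product determines the function algebra as the relative commutant of $L(\bar G)$'s normalizer orbit — more simply, $L^\infty(Z)=L^\infty(Y)\rtimes\bar G\cap L^\infty(Y)$). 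So there are no non-trivial quotients.

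\textbf{Weak mixing and freeness.} For weak mixing I would argue by contradiction using the compactness criterion recalled in Section \ref{section: preliminaries}: if $\bar G\curvearrowright Y$ were not weakly mixing, then either it has a non-trivial compact quotient action (contradicting primeness, since a prime weakly-mixing-or-not action that is not weakly mixing has a non-trivial finite-dimensional/compact quotient — indeed the maximal compact quotient is non-trivial), or one argues directly that the $\bar G$-invariant subspace generated by a non-constant compact vector in $L^2(Y)$ gives, via its closure under multiplication, a non-trivial invariant subalgebra, again contradicting primeness. Either way primeness plus non-weak-mixing is contradictory, so the action is weakly mixing. For freeness, one notes $\bar G=PSL_2(\Q)$ is ICC and simple; the kernel $\ker(\bar G\curvearrowright Y)$ is a normal subgroup, hence trivial or all of $\bar G$, and it is not all of $\bar G$ since the action is non-trivial, so the action is faithful. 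To upgrade faithfulness to (essential) freeness, I would use that for any $g\neq e$ the fixed-point set $\{y: gy=y\}$ is $\bar G$-invariant up to the centralizer, and more robustly: if $\bar G\curvearrowright Y$ were not free, the set of $g$ acting non-freely generates, via the stabilizer subgroups, structure incompatible with weak mixing — alternatively, since $SL_2(\Q)\curvearrowright\widehat{\Q^2}$ is a well-understood algebraic action whose non-free part can be computed directly, one checks that modding out $\phi\sim\phi'$ does not create non-trivial stabilizers (the only $g$ with $g\phi\in\{\phi,\phi'\}$ on a positive measure set, for $\phi$ outside a null set, is $g=\pm\id$, which is trivial in $\bar G$).

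\textbf{Main obstacle.} The delicate point is not primeness, which falls straight out of Theorem \ref{thm: no intermediate subalgs}, but pinning down freeness and weak mixing with the right amount of rigor; in particular, being careful that ``prime $\Rightarrow$ weakly mixing'' genuinely holds (it does, because a non-weakly-mixing p.m.p. action always admits a non-trivial compact, hence proper, quotient — its Kronecker factor), and that the passage from the quotient $Y$ of $\widehat{\Q^2}$ back to freeness is clean. I would therefore handle weak mixing via the Kronecker-factor argument (non-weak-mixing gives a non-trivial quotient, contradicting primeness) and handle freeness by a short direct computation on the algebraic action $SL_2(\Q)\curvearrowright\widehat{\Q^2}$, showing that for a.e.\ $\phi$ the only $g\in SL_2(\Q)$ with $g\phi=\phi$ or $g\phi=\phi'$ is $\pm\id$, so that the descended action of $\bar G=PSL_2(\Q)$ on $Y$ is essentially free.
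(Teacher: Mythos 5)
Your treatment of primeness and freeness is essentially the paper's: primeness is read off from Theorem \ref{thm: no intermediate subalgs} exactly as you describe (a non-trivial quotient $Y\twoheadrightarrow Z$ yields a properly intermediate algebra $L(\bar{G})\subsetneq L^{\infty}(Z)\rtimes\bar{G}\subsetneq L^{\infty}(Y)\rtimes\bar{G}$), and the freeness computation you sketch (for a.e.\ $\phi$ the only $g$ with $g\phi\in\{\phi,\phi'\}$ is $\pm id$) is what the paper leaves as an exercise. The genuine gap is in your weak-mixing argument. You deduce weak mixing from primeness via the Kronecker factor, asserting that a non-weakly-mixing action has a ``non-trivial compact, \emph{hence proper}, quotient.'' The ``hence proper'' is unjustified: the Kronecker factor of a non-weakly-mixing ergodic action is non-trivial, but it may equal the whole action, namely when the action is itself compact, and a compact ergodic action on a non-atomic space can be prime (quotients of $G\curvearrowright K/H$, with $G$ dense in the compact group $K$, correspond to closed subgroups between $H$ and $K$, and $H$ may be maximal --- e.g.\ $O(2)\subset SO(3)$). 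So ``prime $\Rightarrow$ weakly mixing'' is false in general; your argument (and likewise your alternative via the subalgebra generated by compact vectors, which is again exactly the Kronecker factor) only shows that $\bar{G}\curvearrowright Y$ is either weakly mixing or compact. To close the gap you must rule out compactness separately, e.g.\ by noting that the restriction to $SL_2(\Z)$ factors onto the weakly mixing, hence non-compact, action on $\widehat{\Z^2}/{\sim}$, and quotients and restrictions of compact actions are compact.

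The paper sidesteps this entirely by proving weak mixing directly, independently of primeness: since $G\curvearrowright Y$ is a quotient of the algebraic action $G\curvearrowright\widehat{\Q^2}$, it suffices by \cite[Proposition 2.36]{kerrli_book} to check that $[G:\mathrm{Stab}(a)]=\infty$ for every $0\neq a\in\Q^2$, which follows from $\mathrm{Stab}(e_1)=\big(\begin{smallmatrix}1&\Q\\0&1\end{smallmatrix}\big)$ together with transitivity of $SL_2(\Q)$ on $\Q^2\setminus\{0\}$. This direct verification is also what is reused later (weak mixing of $G\curvearrowright\widehat{\Q^2}$ is invoked in the proof of Corollary \ref{cor: L(SL_2(Q)) is mHAP}), so it is the route to take; your Kronecker-factor argument can be salvaged but needs the extra non-compactness step.
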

\begin{proof}
Clearly, any non-trivial quotient action gives rise to a non-trivial intermediate von Neumann subalgebra. Theorem \ref{thm: no intermediate subalgs} implies the action is prime. Freeness part is easy to check and we leave it as an exercise.
We are left to check $\bar{G}\curvearrowright Y$ is weakly mixing. Equivalently, we need to check $G\curvearrowright Y$ is weakly mixing. As $G\curvearrowright Y$ is a quotient action of $G\curvearrowright X$, it suffices to show $G\curvearrowright X$ is weakly mixing. Since $G\curvearrowright X$ is an algebraic action, by \cite[Proposition 2.36]{kerrli_book}, we just need to check that $[G: Stab(a)]=\infty$ for each $0\neq a\in \widehat{X}=\mathbb{Q}^2$, where $Stab(a):=\{g\in G: g.a=a\}$. If $a=e_1:=\big(\begin{smallmatrix}1\\0 \end{smallmatrix}\big)$, then $Stab(e_1)=\big(\begin{smallmatrix} 1&\mathbb{Q}\\
0&1\end{smallmatrix}\big)$. Clearly, $[G: Stab(e_1)]=\infty$. For a general $0\neq a\in\mathbb{Q}^2$, there exists some $g\in SL_2(\mathbb{Q})$ such that $a=g.e_1$. Therefore, $[G: Stab(a)]=[G: gStab(e_1)g^{-1}]=[G: Stab(e_1)]=\infty$.
\end{proof}

Moreover, we also have the following two corollaries, whose proofs will be given at the end of subsection \ref{subsection: Q-case, factor} and subsection \ref{subsection: Q-case, non-factor} respectively.

\begin{corollary}\label{cor: max Haagerup for LG}
With the above notations, $L(\bar{G})$ is a maximal Haagerup von Neumann subalgebra in $L^{\infty}(Y)\rtimes \bar{G}$.
\end{corollary}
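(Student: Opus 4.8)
The plan is to deduce this from Theorem \ref{thm: no intermediate subalgs} together with the relative property (T) obstruction to the Haagerup property. Suppose $L(\bar G)\subsetneq P\subseteq L^\infty(Y)\rtimes\bar G$ is an intermediate von Neumann subalgebra strictly containing $L(\bar G)$. By Theorem \ref{thm: no intermediate subalgs} the only possibility is $P=L^\infty(Y)\rtimes\bar G$, so it suffices to show that $L^\infty(Y)\rtimes\bar G$ does not have the Haagerup property. Granting this, any intermediate $P$ that has the Haagerup property must equal $L(\bar G)$ (which has the Haagerup property since $\bar G=PSL_2(\Q)$ is a Haagerup group, being a subgroup-quotient of $SL_2(\Q)$, hence by \cite{ccjjv}-type arguments; in any case $SL_2(\Q)$ has the Haagerup property and the Haagerup property passes to quotients), giving maximality.

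So the crux is: $L^\infty(Y)\rtimes\bar G$ fails the Haagerup property. The natural route is to exhibit a diffuse von Neumann subalgebra $N$ with $N\subset L^\infty(Y)\rtimes\bar G$ having relative property (T), and invoke the standard fact (2) recalled in the Preliminaries \cite{popa_annals}. The candidate is $N=A_0\rtimes\Gamma$ or rather a piece built from an embedded copy of $\Z^2\rtimes SL_2(\Z)$. Concretely, $SL_2(\Z)\subset SL_2(\Q)$ and $\Z^2\subset\Q^2$ is an $SL_2(\Z)$-submodule, so we get a subgroup $\Z^2\rtimes SL_2(\Z)$ of $\Q^2\rtimes SL_2(\Q)$; after passing to $PSL_2$ and to the quotient space $Y$, we want the corresponding crossed product $B\rtimes\overline{SL_2(\Z)}$ inside $L^\infty(Y)\rtimes\bar G$, where $B\subset L^\infty(Y)$ is the part coming from the dual of $\Z^2/\{\pm\}$. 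The relative property (T) of the pair $(\Z^2\rtimes SL_2(\Z), \Z^2)$ \cite{margulis} translates, via Popa's work, to the relative property (T) of $L(\Z^2)\subset L(\Z^2\rtimes SL_2(\Z))$; one must then check this survives the passage to the $\pm$-quotient. The delicate point is that $L(\Z^2)$ itself does \emph{not} obviously embed — only its $\pm$-invariant part $L(\Z^2)^{\pm}$ does — and I would need that $L(\Z^2)^{\pm}\subset L(\Z^2)^{\pm}\rtimes SL_2(\Z)$ still has relative property (T) and that $L(\Z^2)^{\pm}$ is diffuse (which it is, since $\Z^2/\{\pm\}$ is infinite).

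The key steps, in order: (i) record that $\bar G=PSL_2(\Q)$ has the Haagerup property, hence $L(\bar G)$ does, so $L(\bar G)$ is a legitimate ``small'' algebra; (ii) identify inside $L^\infty(Y)\rtimes\bar G$ the subalgebra $Q:=L^\infty(Y_0)\rtimes\overline{SL_2(\Z)}$ coming from the subgroup $\Z^2\rtimes SL_2(\Z)$ (with $Y_0$ the quotient of $\widehat{\Z^2}$ by $\pm$), noting $L^\infty(Y_0)\cong L(\Z^2)^{\pm}$ is diffuse; (iii) show $L(\Z^2)^{\pm}\subset Q$ has relative property (T), by transporting the relative property (T) of $(\Z^2\rtimes SL_2(\Z),\Z^2)$ through the quotient $\Z^2\twoheadrightarrow\Z^2/\{\pm\}$ at the level of group pairs — here I would use that relative property (T) of a pair $(G,H)$ passes to $(G/N, HN/N)$ when $N\triangleleft G$, applied with $G=\Z^2\rtimes SL_2(\Z)$, $H=\Z^2$, and the relevant central $\Z/2$-type quotient — and then Popa's correspondence between group relative property (T) and von Neumann relative property (T); (iv) conclude by fact (2) that $Q$, hence $L^\infty(Y)\rtimes\bar G\supseteq Q$, fails the Haagerup property; (v) combine with Theorem \ref{thm: no intermediate subalgs} to get maximality.

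The main obstacle I anticipate is step (iii): making precise that the relative property (T) of $L(\Z^2)\subset L(\Z^2\rtimes SL_2(\Z))$ descends correctly to the $\pm$-quotient and lands inside $L^\infty(Y)\rtimes\bar G$ as an inclusion of a diffuse subalgebra with relative property (T). One subtlety is that quotienting $\widehat{\Z^2}=\T^2$ by $z\mapsto -z$ is not a group quotient of $\T^2$, so the dual picture is not simply a subgroup of $\Z^2$; instead one works on the algebra side with $L(\Z^2)^{\pm}$. An alternative that may sidestep this entirely: since $L^\infty(Y)\rtimes\bar G$ is the index-$2$ (in the Pimsner--Popa sense, via the central projection $q$) corner of $L(\Q^2\rtimes SL_2(\Q))$, and $L(\Q^2\rtimes SL_2(\Q))$ manifestly contains $L(\Z^2)\subset L(\Z^2\rtimes SL_2(\Z))$ with relative property (T) hence fails Haagerup, one can try to argue that the Haagerup property for $L^\infty(Y)\rtimes\bar G$ would, together with that of the complementary corner, force it on the whole — but the complementary corner $q^\perp\cdot(A\rtimes G)\cong L(\Q^2\rtimes\Q)$ type piece is amenable, and Haagerup property is stable under such finite direct sums and finite-index symmetry; this direct-sum/finite-index bootstrap is the cleanest path and I would pursue it in parallel with (iii).
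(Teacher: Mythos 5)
Your overall architecture coincides with the paper's: reduce via Theorem \ref{thm: no intermediate subalgs} to showing that $L^{\infty}(Y)\rtimes \bar{G}$ fails the Haagerup property, and obtain that failure from a relative property (T) inclusion supported on the embedded $\Z$-coefficient piece $L^{\infty}(\T^2/{\sim})\rtimes PSL_2(\Z)$. Steps (i), (ii), (iv), (v) are fine (though note in passing that ``the Haagerup property passes to quotients'' is false for general group quotients — every group is a quotient of a free group; what saves you is that $SL_2(\Q)\twoheadrightarrow PSL_2(\Q)$ has finite central kernel, or equivalently that $L(PSL_2(\Q))$ is a corner of $L(SL_2(\Q))$). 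The genuine gap is your step (iii), and the tool you name for it does not apply. The subgroup $\{(0,\pm id)\}$ of $\Z^2\rtimes SL_2(\Z)$ is \emph{not} normal (conjugating $(0,-id)$ by $(v,id)$ gives $(2v,-id)$), and the orbit space $\Z^2/\{v\sim -v\}$ is not a group; so the statement ``relative (T) of $(G,H)$ passes to $(G/N,HN/N)$'' has no $N$ to be applied to, and $L^{\infty}(\T^2/{\sim})\rtimes PSL_2(\Z)$ is not the group von Neumann algebra of any quotient of $\Z^2\rtimes SL_2(\Z)$. You flag this subtlety yourself but do not resolve it. The paper's resolution has two ingredients you would need to supply: first, relative property (T) for $L^{\infty}(\T^2/{\sim})\subset L^{\infty}(\T^2/{\sim})\rtimes SL_2(\Z)$, which is not a formal consequence of relative (T) for the pair $(\Z^2\rtimes SL_2(\Z),\Z^2)$ and is quoted from \cite[Lemma 3.5]{js}; second, a transfer from the $SL_2(\Z)$- to the $PSL_2(\Z)$-crossed product, achieved by exhibiting a surjective $*$-homomorphism $\phi:L^{\infty}(\T^2/{\sim})\rtimes SL_2(\Z)\to L^{\infty}(\T^2/{\sim})\rtimes PSL_2(\Z)$ (compression by the central projection $q=\frac{u_{id}+u_{-id}}{2}$ followed by an explicit isomorphism $\pi$) which restricts to the identity on $L^{\infty}(\T^2/{\sim})$, so that relative (T) passes along $\phi$ by a standard argument. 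Without one of these mechanisms, step (iii) is an assertion rather than a proof.

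Your fallback ``direct-sum/finite-index bootstrap'' is based on a false premise and cannot be repaired along the lines you sketch: the complementary corner $(1-q)(A\rtimes G)$ is not amenable — it contains $(1-q)L(SL_2(\Q))$, and more to the point it contains the very same relative property (T) inclusion $(1-q)B\subset (1-q)(B\rtimes SL_2(\Z))$ — so both corners of $A\rtimes G$ fail the Haagerup property and no contradiction can be extracted from assuming only one of them has it.
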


\begin{corollary}\label{cor: L(SL_2(Q)) is mHAP}
With the above notations, $LG$ is maximal Haagerup inside $L(\Q^2\rtimes G)$. 
\end{corollary}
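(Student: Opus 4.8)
The plan is to deduce Corollary~\ref{cor: L(SL_2(Q)) is mHAP} from the classification of intermediate algebras inside the index‑two subalgebra $M_{0}:=A\rtimes G=L^{\infty}(Y)\rtimes G$ of $M:=L(\Q^{2}\rtimes G)=L^{\infty}(X)\rtimes G$ furnished by Theorem~\ref{thm: complete description of intermediate vn algs}, together with a controlled bookkeeping of how a would‑be Haagerup intermediate algebra of $M$ can sit across $M_{0}\subseteq M$. The structural remark that drives everything is that $u_{-id}=\lambda(-\mathrm{id})\in L(G)$ is a self‑adjoint unitary with $M_{0}=\{u_{-id}\}'\cap M$, so the trace‑preserving conditional expectation is $E_{M_{0}}=\tfrac12(\mathrm{id}+\mathrm{Ad}\,u_{-id})$, $[M:M_{0}]=2$, and $q=\tfrac{u_{id}+u_{-id}}{2}$ is central in $M_{0}$ but not in $M$. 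Since $L(SL_{2}(\Q))$ has the Haagerup property --- for instance because $SL_{2}(\Q)=\bigcup_{S}SL_{2}(\Z[1/S])$ is a directed union of lattices in the Haagerup groups $SL_{2}(\R)\times\prod_{p\in S}SL_{2}(\Q_{p})$ --- it suffices to prove that every von Neumann subalgebra $Q$ with $L(G)\subsetneq Q\subseteq M$ fails the Haagerup property.

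So fix such a $Q$. As $u_{-id}\in L(G)\subseteq Q$, we get $E_{M_{0}}(Q)\subseteq Q\cap M_{0}$, so $Q\cap M_{0}\subseteq Q$ carries a trace‑preserving conditional expectation and, restricting the Pimsner--Popa inequality for $E_{M_{0}}$, $[Q:Q\cap M_{0}]\le[M:M_{0}]=2$. The first key step is to show $Q\cap M_{0}\neq L(G)$. Suppose it were. Then, decomposing $Q$ along the symmetry $u_{-id}$, the ``odd part'' $Q_{-}:=\{x\in Q:u_{-id}xu_{-id}=-x\}$ is non‑zero (because $Q\supsetneq Q\cap M_{0}$); for any $0\neq x\in Q_{-}$ one has $x^{*}x\in Q\cap M_{0}=L(G)$, so the polar part of $x$ is a non‑zero partial isometry $w\in Q_{-}$ with $w^{*}w,ww^{*}\in L(G)$ and $wL(G)w^{*}\subseteq L(G)$, $w^{*}L(G)w\subseteq L(G)$; moreover $w\notin M_{0}$ since $Q_{-}\cap M_{0}=0$. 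Thus $w$ is a non‑trivial element of the (groupoid) quasi‑normalizer of $L(G)$ in $M$, contradicting the fact that this quasi‑normalizer equals $L(G)$. This singularity statement is where the essential freeness and weak mixing of $G\curvearrowright X$ (Corollary~\ref{cor: weakly mixing prime action}) and the perfectness of $SL_{2}(\Q)$ get used, via a Fourier‑expansion analysis of $w$ in the spirit of \cite{chifan_das, ioana_duke, jones_xu}.

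Consequently $Q\cap M_{0}$ is one of the three remaining algebras of Theorem~\ref{thm: complete description of intermediate vn algs}:
\[
A\rtimes G,\qquad qL(G)\oplus(1-q)(A\rtimes G),\qquad (1-q)L(G)\oplus q(A\rtimes G).
\]
Each of these has a non‑Haagerup direct summand, namely $q(A\rtimes G)\cong L^{\infty}(Y)\rtimes\bar{G}$ or its $2$‑cocycle twist $(1-q)(A\rtimes G)\cong L^{\infty}(Y)\rtimes_{\sigma}\bar{G}$ --- the complementary summands $qL(G)\cong L(\bar{G})$ and $(1-q)L(G)\cong L_{\sigma}(\bar{G})$ are corners of $L(G)$, hence Haagerup, and play no role. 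Here $L^{\infty}(Y)\rtimes\bar{G}$ is not Haagerup: by Corollary~\ref{cor: max Haagerup for LG} it contains $L(\bar{G})$ as a maximal Haagerup subalgebra, and since $L(\bar{G})$ is a \emph{proper} subalgebra, $L^{\infty}(Y)\rtimes\bar{G}$ cannot itself be Haagerup; its twist is handled identically, the relative property (T) responsible for this failure living on a diffuse subalgebra of $L^{\infty}(Y)$ and being insensitive to the cocycle. Hence all three algebras above are non‑Haagerup, so $Q$ contains the non‑Haagerup algebra $Q\cap M_{0}$ together with a trace‑preserving conditional expectation onto it, whence $Q$ is non‑Haagerup. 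With the reduction of the first paragraph, this proves that $L(G)$ is maximal Haagerup in $L(\Q^{2}\rtimes G)$.

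The step I expect to be the genuine obstacle is the singularity statement in the second paragraph: that $L(SL_{2}(\Q))$ has trivial quasi‑normalizer inside $L^{\infty}(X)\rtimes SL_{2}(\Q)$, in the robust sense that also rules out the partial isometries $w$ that arise. Everything else is either routine index bookkeeping around $M_{0}\subseteq M$, or a direct appeal to Theorem~\ref{thm: no intermediate subalgs} and Corollary~\ref{cor: max Haagerup for LG}.
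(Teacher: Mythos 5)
Your proposal is correct and its architecture coincides with the paper's: reduce modulo the order-two symmetry $\mathrm{Ad}(u_{-id})$ to the index-two subalgebra $M_0=A\rtimes G$, use Theorem \ref{thm: complete description of intermediate vn algs} together with relative property (T) of $(A\cap L(\Z^2))\subset (A\cap L(\Z^2))\rtimes SL_2(\Z)$ (cut by the central projections $q$, $1-q$) to rule out the case $Q\cap M_0\neq LG$, and invoke triviality of the quasi-normalizer of $LG$ in $L^\infty(X)\rtimes G$ (Ioana's computation $\mathcal{QN}''=L^\infty(X_c)\rtimes G$ plus weak mixing) to rule out $Q\cap M_0=LG$ with $Q\neq LG$. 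The one place you diverge is the last step: the paper gets $[P:P\cap M_0]<\infty$ from Jones--Xu and then cites the standard fact that finite Pimsner--Popa index forces $P\subseteq\mathcal{QN}_M(LG)''$, whereas you extract a partial isometry $w$ in the odd part with $w^*w,ww^*\in LG$ and $wLGw^*\cup w^*LGw\subseteq LG$ and assert it is a quasi-normalizer element. That last assertion is the only soft spot: a groupoid-normalizing partial isometry need not literally satisfy Popa's two-sided quasi-normalizer condition, so you should either verify that $w$ generates a finitely generated $LG$-bimodule (hence lies in $\mathcal{QN}''$), or, more simply, note that your explicit bound $[Q:Q\cap M_0]\le 2$ already gives $[Q:LG]_{PP}\le 2<\infty$ in this case and then quote the same finite-index-implies-quasi-normalizer statement the paper uses --- you correctly flagged this as the point needing the ``robust'' singularity input. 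Everything else (Haagerupness of $L(SL_2(\Q))$, the cocycle-twist description of $(1-q)(A\rtimes G)$, and passing non-Haagerupness from $Q\cap M_0$ up to $Q$ via the conditional expectation) matches the paper's proof.
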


\subsection{Factor inclusion: $L(PSL_2(\mathbb{Q}))\subset L^{\infty}(Y)\rtimes PSL_2(\mathbb{Q})$}\label{subsection: Q-case, factor}

In this subsection, we prove Theorem \ref{thm: no intermediate subalgs} and Corollary \ref{cor: max Haagerup for LG}. 
\begin{proof}[Proof of Theorem \ref{thm: no intermediate subalgs}]
The proof is inspired by the proof of \cite[Proposition 5.7]{jiang}. We split the proof into several steps.

\textbf{Step 1}: preparation and setting up notations.

Let $P$ be any von Neumann subalgebra between $N:=L(\bar{G})$ and $M:=L^{\infty}(Y)\rtimes \bar{G}$. Let $A:=L^{\infty}(Y)\subset L(\mathbb{Q}^2)$ and $E$ be the trace preserving conditional expectation onto $P$. 

Let $e_1=\big(\begin{smallmatrix}1\\0\end{smallmatrix}\big)$ and $e_2=\big(\begin{smallmatrix}0\\1\end{smallmatrix}\big)$. For any $g\in G$ and $v\in \mathbb{Q}^2$, we write $g.v$ for the matrix multiplication between $g$ and $v$. Let $K=\big(\begin{smallmatrix}1&\mathbb{Q}\\
0&1\end{smallmatrix}\big)$ and $\bar{K}$ be the image of $K$ under the quotient map $G\twoheadrightarrow \bar{G}$. 

For any $0\neq x\in \mathbb{Q}$ and $y\in\mathbb{Q}$, define $p_{x, y}=\big(\begin{smallmatrix}
x&0\\
y&\frac{1}{x}
\end{smallmatrix}\big)$, $q_x=\big(\begin{smallmatrix}
0&\frac{-1}{x}\\
x&0
\end{smallmatrix}\big)$ and $r_x=\big(\begin{smallmatrix}
1&x\\
0&1
\end{smallmatrix}\big)$. Clearly, $p_{x, y}^{-1}=p_{\frac{1}{x}, -y}$, $q_x^{-1}=q_{-x}$ and $r_x^{-1}=r_{-x}.$

Observe that if $\phi\in Aut(L^{\infty}(Y)\rtimes \bar{G})$ and $\phi(P)=P$, then $\phi(E(u_{e_1}+u_{-e_1}))=E(\phi(u_{e_1}+u_{-e_1}))$ as $E(u_{e_1}+u_{-e_1})\in P$. In particular, take $\phi=Ad(u_g)$ for any $g\in \bar{G}$, we get 
\begin{align}\label{eq: compute E(u_b+u_-b) by E(u_e_1+u_-e_1)}
    u_gE(u_{e_1}+u_{-e_1})u_g^*=E(u_{g.{e_1}}+u_{-g.{e_1}}).
\end{align}
Clearly, if $g\in K$, then $g.{e_1}=e_1$. Therefore, we deduce that 
\begin{align*}
    E(u_{e_1}+u_{-e_1})\in L(\bar{K})'\cap M.
\end{align*}

Claim 1: $L(\bar{K})'\cap M\subseteq (A\cap L(e_1\mathbb{Q}))\rtimes \bar{K}$, where $\mathbb{Q}e_1=\big(\begin{smallmatrix}
\mathbb{Q}\\0
\end{smallmatrix}\big)\subset\mathbb{Q}^2$.
\begin{proof}[Proof of Claim 1]
Let $a\in L(\bar{K})'\cap M$ and $a=\sum_{vg}\lambda_{vg}vg$ be its Fourier expansion, where $\lambda_{vg}=\lambda_{(-v)g}\in\mathbb{C}$ for all $v\in \mathbb{Q}^2$ and all $g\in \bar{G}$.
Then, $r_{nx}ar_{-nx}=a$ for every $0\neq x\in\mathbb{Q}$ and every integer $n$. Hence, we get
$    \lambda_{vg}=\lambda_{(r_{nx}.v)(r_{nx}gr_{-nx})}$ for all $v\in \mathbb{Q}^2$, all $x\in \Q\setminus\{0\}$ and all $g\in \bar{G}$.
Now, one can check that if $g\not\in\bar{K}$ and $x\neq 0$, then 
$r_{nx}gr_{-nx}\neq r_{mx}gr_{-mx}$ for all $n\neq m$.
Therefore, we must have $\lambda_{vg}=0$ if $v\in \mathbb{Q}^2$ and $g\not\in \bar{K}$.

Let $g\in\bar{K}$. If $bx\neq 0$, then clearly  $n\neq m$ implies
$r_{nx}.v\neq r_{mx}.v$, where $v=\big(\begin{smallmatrix}
c\\b
\end{smallmatrix}\big)$,
which in turn implies that $\lambda_{vg}=0$ if $v\not\in \mathbb{Q}e_1$. Therefore, $a\in (A\cap L(\mathbb{Q}e_1))\rtimes \bar{K}$.
\end{proof}
By Claim 1, we can write 
   $ E(u_{e_1}+u_{-e_1})=\sum_{x, y\in \mathbb{Q}}\lambda_{x, y}\big(\begin{smallmatrix}x\\0\end{smallmatrix}\big)\big(\begin{smallmatrix}1&y\\0&1\end{smallmatrix}\big),$
where $\lambda_{x, y}=\lambda_{-x,y}$ for all $(x, y)\in\mathbb{Q}^2$.
In other words, 
\begin{align}\label{def: def of E(e_1+-e_1)}
E(u_{e_1}+u_{-e_1})=\frac{1}{2}\sum_{x, y\in \mathbb{Q}}\lambda_{x, y}\big[\big(\begin{smallmatrix}x\\0\end{smallmatrix}\big)+\big(\begin{smallmatrix}-x\\0\end{smallmatrix}\big)\big]\big(\begin{smallmatrix}1&y\\0&1\end{smallmatrix}\big).   
\end{align}
\begin{remark}
From now on, we will not mention the range for $(x, y)$ if no confusion arises. Moreover, we simplify the notation by using $v$ to denote the canonical unitary $u_v$ for all $v\in \mathbb{Q}^2$. So be alert that $v_1+v_2$ stands for $u_{v_1}+u_{v_2}$ instead of the sum of two vectors in $\mathbb{Q}^2$.  We also remind the reader that all $2\times 2$ matrices in the proof of Theorem \ref{thm: no intermediate subalgs} belong to $\bar{G}$. In other words, we identify elements in $G$ with their images in $\bar{G}$ to simplify notations.
\end{remark}
\textbf{Step 2}: find restrictions on $\lambda_{x, y}$.

First, for any $b\in \mathbb{Q}$, let $g=\big(\begin{smallmatrix}
1&b\\0&1
\end{smallmatrix}\big)$. Since $u_g\in P$, we deduce 
\begin{align}\label{eq: 2nd restriction}
\lambda_{0, b}=\langle E(u_{e_1}+u_{-e_1}),u_g \rangle=\langle u_{e_1}+u_{-e_1}, u_g\rangle=0.
\end{align}

Next, as $E$ satisfies $P$-bimodular property, we get
\begin{align}\label{eq: E(E() )=E()E()}
E(E(u_{e_1}+u_{-e_1})(u_{e_2}+u_{-e_2}))=E(u_{e_1}+u_{-e_1})E(u_{e_2}+u_{-e_2}).
\end{align}
Set $g=\big(\begin{smallmatrix}0&-1\\1&0\end{smallmatrix}\big)\in \bar{G}$ in (\ref{eq: compute E(u_b+u_-b) by E(u_e_1+u_-e_1)}) and note that $e_2=g.e_1$, we get that
\begin{align}\label{def: def of E(e_2+-e_2)}
\begin{split} 
E(u_{e_2}+u_{-e_2})&=u_gE(u_{e_1}+u_{-e_1})u_g^*\overset{(\ref{def: def of E(e_1+-e_1)})}{=}\sum_{x, y}\lambda_{x, y}\big(\begin{smallmatrix}0\\x \end{smallmatrix}\big)\big(\begin{smallmatrix} 1&0\\-y&1\end{smallmatrix}\big)\\
&=\frac{1}{2}\sum_{x, y}\lambda_{x, y}\big[\big(\begin{smallmatrix}0\\x \end{smallmatrix}\big)+\big(\begin{smallmatrix}0\\-x \end{smallmatrix}\big)\big]\big(\begin{smallmatrix} 1&0\\-y&1\end{smallmatrix}\big).
\end{split}
\end{align}

Next, let us compute both sides of (\ref{eq: E(E() )=E()E()}). 

By (\ref{def: def of E(e_1+-e_1)}), a simple calculation shows that the LHS of (\ref{eq: E(E() )=E()E()}) equals  
\[\frac{1}{2}\sum_{x, y}\lambda_{x, y}E\big[ \big(\begin{smallmatrix}x+y\\1 \end{smallmatrix}\big) +\big(\begin{smallmatrix}-x-y\\-1 \end{smallmatrix}\big)  \big]\big(\begin{smallmatrix} 1&y\\0&1\end{smallmatrix}\big)
    +\frac{1}{2}\sum_{x, y}\lambda_{x, y}E\big[ \big(\begin{smallmatrix}-x+y\\1 \end{smallmatrix}\big)+\big(\begin{smallmatrix}x-y\\-1 \end{smallmatrix}\big) \big]\big(\begin{smallmatrix} 1&y\\0&1\end{smallmatrix}\big).\]

Using $\lambda_{x, y}=\lambda_{-x, y}$ and doing change of variables, we deduce that the two summands above are equal. Besides, we can express both $E\big[ \big(\begin{smallmatrix}x+y\\1 \end{smallmatrix}\big) +\big(\begin{smallmatrix}-x-y\\-1 \end{smallmatrix}\big)  \big]$  and $E\big[ \big(\begin{smallmatrix}-x+y\\1 \end{smallmatrix}\big)+\big(\begin{smallmatrix}x-y\\-1 \end{smallmatrix}\big) \big]$ in terms of $E(u_{e_1}+u_{-e_1})$ by setting $v=e_1$, $g=\big(\begin{smallmatrix} x+y&x+y-1\\
 1&1\end{smallmatrix}\big)$ and $g=\big(\begin{smallmatrix} -x+y&-x+y-1\\
 1&1\end{smallmatrix}\big)$ respectively in (\ref{eq: compute E(u_b+u_-b) by E(u_e_1+u_-e_1)}).
  
Based on the above facts and (\ref{def: def of E(e_1+-e_1)}), we can continue the above calculation to get:
\begin{align*}
    \quad\mbox{LHS of (\ref{eq: E(E() )=E()E()})}&=\sum_{x, y, a, b}\lambda_{x,y}\lambda_{a, b}\big(\begin{smallmatrix}
a(x+y)\\a
\end{smallmatrix}\big)\big(\begin{smallmatrix}
1-b(x+y)&y+bx(x+y)\\
-b&1+bx
\end{smallmatrix}\big)\\
&=\frac{1}{2}\sum_{x, y, a, b}\lambda_{x,y}\lambda_{a, b}\big[\big(\begin{smallmatrix}
a(x+y)\\a
\end{smallmatrix}\big)+\big(\begin{smallmatrix}
-a(x+y)\\-a
\end{smallmatrix}\big) \big]\big(\begin{smallmatrix}
1-b(x+y)&y+bx(x+y)\\
-b&1+bx
\end{smallmatrix}\big)\\
&\mbox{(by $\lambda_{a, b}=\lambda_{-a, b}$ and change of variables)}\\
&=\sum_{a>0}\sum_{x, y, b}\lambda_{x, y}\lambda_{a, b}\big[\big(\begin{smallmatrix}
a(x+y)\\a
\end{smallmatrix}\big)+\big(\begin{smallmatrix}
-a(x+y)\\-a
\end{smallmatrix}\big)\big]\big(\begin{smallmatrix}
1-b(x+y)&y+bx(x+y)\\
-b&1+bx
\end{smallmatrix}\big). 
 \end{align*}
Here, to get the last equality, we have used (\ref{eq: 2nd restriction}) to cross out the terms corresponding to $a=0$.

Then, using (\ref{def: def of E(e_1+-e_1)}) and (\ref{def: def of E(e_2+-e_2)}), we get
\begin{align*} 
\mbox{RHS of (\ref{eq: E(E() )=E()E()})}
&=\sum_{x, y}\sum_{a, b}\frac{\lambda_{x, y}\lambda_{a, b}}{2}\big[ \big(\begin{smallmatrix}
x+ay\\a
\end{smallmatrix}\big)+\big(\begin{smallmatrix}
-x-ay\\-a
\end{smallmatrix}\big)\big]\big(\begin{smallmatrix}
1-by&y\\
-b&1
\end{smallmatrix}\big)\\
&{(\mbox{use $\lambda_{x, y}=\lambda_{-x, y}$ and $\lambda_{a, b}=\lambda_{-a, b}$.})}\\
&=\sum_{x, y, b}\sum_{a>0}\lambda_{x, y}\lambda_{a, b}\big[ \big(\begin{smallmatrix}
x+ay\\a
\end{smallmatrix}\big)+\big(\begin{smallmatrix}
-x-ay\\-a
\end{smallmatrix}\big)\big]\big(\begin{smallmatrix}
1-by&y\\
-b&1
\end{smallmatrix}\big).\\
&{(\mbox{use $\lambda_{x, y}=\lambda_{-x, y}$ and $\lambda_{a, b}=\lambda_{-a, b}$ and do change of variables.})}
\end{align*}
Once again, we cross out two terms corresponding to $a=0$ by (\ref{eq: 2nd restriction}) to get the last equality.

Now, fix any $a>0$ and $(x, y, b)\in \mathbb{Q}^3$. By comparing the coefficients in front of the term 
$\big[ \big(\begin{smallmatrix}
x+ay\\a
\end{smallmatrix}\big)+\big(\begin{smallmatrix}
-x-ay\\-a
\end{smallmatrix}\big) \big]\big(\begin{smallmatrix}
1-by&y\\
-b&1
\end{smallmatrix}\big)$ for both sides of (\ref{eq: E(E() )=E()E()}), we deduce that for any $a>0$ and $(x, y, b)\in \mathbb{Q}^3$, 
\begin{align}\label{eq: eq1-6}
\lambda_{x, y}\lambda_{a, b}=
\begin{cases}
\lambda_{\frac{x}{a}, y}\lambda_{a, b}, &~\mbox{if $bx=0$}\\
\lambda_{\frac{2}{b},y-\frac{4}{b}}\lambda_{a, -b}, &~\mbox{if~ $2a+bx=0$}\\
0, &~\mbox{otherwise}.
\end{cases}
\end{align} 

We remind the reader that the three cases appear due to the fact that the acting group in the context of Theorem \ref{thm: no intermediate subalgs} is $\bar{G}$ instead of $G$.

\textbf{Step 3}: use restrictions to deduce that $P=N$ or $M$.

Take $(a, b)=(x,y)$ in the last case in (\ref{eq: eq1-6}), we deduce that $\lambda_{x, y}=0$. Note that the assumptions in this case, i.e. $a>0$, $bx\neq 0$ and $2a+bx\neq 0$ are equivalent to $x>0$ and $y\neq 0$, $-2$ (w.r.t. our choice $(a, b)=(x, y)$). 
So, $\lambda_{x, y}=0$ for all $x\neq 0$ and $y\not\in\{0, -2\}$ as $\lambda_{x, y}=\lambda_{-x, y}$ holds for all $(x, y)\in \mathbb{Q}^2$.

Take $b=y=-2$ and $a=x>0$ in the 2nd case of (\ref{eq: eq1-6}), we deduce that $\lambda_{x, -2}^2=\lambda_{-1, 0}\lambda_{x, 2}=0$ as $\lambda_{x, 2}=0$ from above. Then $\lambda_{x, -2}=0$. Note that the assumptions in this case, i.e. $a>0$ and $2a+bx=0$ are reduced to $x>0$ (w.r.t. our choice $b=y=-2$ and $a=x>0$). So $\lambda_{x, -2}=0$ for all $x\neq 0$.

To sum up, we have proved that $\lambda_{x, y}=0$ for all $x\neq 0$ and $y\neq 0$. Recall that $\lambda_{0, b}=0$ for all $b\in \mathbb{Q}$ by (\ref{eq: 2nd restriction}). Hence $\lambda_{x, y}=0$ unless $y=0$.  

Now, we claim that $\lambda_{a, 0}=0$ for all $a\not\in \{0, \pm 1\}$.

Assume not, then there exist some $a\not\in \{0, \pm 1\}$ such that $\lambda_{a,0}\neq 0$. We may further assume $a>0$. Set $b=y=0$ in the 1st case of (\ref{eq: eq1-6}), we get that $\lambda_{x, 0}=\lambda_{\frac{x}{a}, 0}$ for all $x\neq 0$; equivalently, $\lambda_{ax,0}=\lambda_{x, 0}$ for all $x\neq 0$.
Therefore, $\lambda_{x, 0}=\lambda_{ax, 0}=\lambda_{a^2x, 0}=\cdots=\lambda_{a^nx,0}$ for all $n\geq 1$ if $x\neq 0$. But $a^nx\neq a^mx$ for all $n\neq m$ as $a\in \mathbb{Q}\setminus\{0, \pm 1\}$, this implies that $\lambda_{x, 0}=0$ for all $x\neq 0$, which contradicts to the assumption that $\lambda_{a, 0}\neq 0$.

Therefore, we have shown that $\lambda_{x, y}=0$ unless $(x, y)=(\pm 1, 0)$. Hence, $E(u_{e_1}+u_{-e_1})=\lambda_{1, 0}\big(\begin{smallmatrix}1\\0\end{smallmatrix}\big)+\lambda_{-1, 0}\big(\begin{smallmatrix} -1\\0\end{smallmatrix}\big)=\lambda_{1, 0}(u_{e_1}+u_{-e_1})$. Applying $E$ on both sides, we deduce that $\lambda_{1, 0}=\lambda_{1, 0}^2$, i.e. $\lambda_{1, 0}=0$ or $1$. This implies that $P=N$ or $M$.
\end{proof}
Next, let us prove Corollary \ref{cor: max Haagerup for LG}.
\begin{proof}[Proof of Corollary \ref{cor: max Haagerup for LG}]
First, $SL_2(\mathbb{Q})$ has Haagerup property as a discrete group by \cite{ghw}, therefore, $L(SL_2(\mathbb{Q}))$ has Haagerup property by \cite{choda_m}. Let $q=\frac{u_{id}+u_{-id}}{2}$. As $qL(SL_2(\mathbb{Q}))q\cong L(PSL_2(\mathbb{Q}))$ via the isomorphism $\pi$ as defined in next paragraph, we know $L(PSL_2(\mathbb{Q}))$ also has Haagerup property by \cite[Proposition 2.4.1]{popa_annals}. Hence $\bar{G}=PSL_2(\mathbb{Q})$ also has Haagerup property by \cite{choda_m}. By Theorem \ref{thm: no intermediate subalgs}, we are left to show $L^{\infty}(Y)\rtimes \bar{G}$ does not have Haagerup property. Observe that we have a natural embedding $L^{\infty}(Z)\rtimes PSL_2(\mathbb{Z})\hookrightarrow L^{\infty}(Y)\rtimes \bar{G}$ induced by the inclusion $\Z\hookrightarrow \Q$, where $L^{\infty}(Z)$ is defined similarly as $L^{\infty}(Y)$ but using $\Z$-coefficient, i.e. $L^{\infty}(Z)\rtimes PSL_2(\mathbb{Z})=L^{\infty}(\mathbb{T}^2/{\sim})\rtimes PSL_2(\mathbb{Z})$. It suffices to prove that $L^{\infty}(\mathbb{T}^2/{\sim})\subset L^{\infty}(\mathbb{T}^2/{\sim})\rtimes PSL_2(\mathbb{Z})$ has relative property (T).

First, by \cite[Lemma 3.5]{js}, we know that  $L^{\infty}(\mathbb{T}^2/{\sim})\subset L^{\infty}(\mathbb{T}^2/{\sim})\rtimes SL_2(\mathbb{Z})$ has relative property (T). Then, observe that we have a $*$-homomorphism $\phi: L^{\infty}(\mathbb{T}^2/{\sim})\rtimes SL_2(\mathbb{Z})\to L^{\infty}(\mathbb{T}^2/{\sim})\rtimes PSL_2(\mathbb{Z})$ induced by the quotient map $SL_2(\mathbb{Z})\twoheadrightarrow PSL_2(\mathbb{Z})$. Indeed, let $q=\frac{u_{id}+u_{-id}}{2}$. We define $\phi$ as the composition of the following maps:
\begin{align*}
&\phi: L^{\infty}(\mathbb{T}^2/{\sim})\rtimes SL_2(\mathbb{Z})\overset{q\cdot q}{\twoheadrightarrow}q(L^{\infty}(\mathbb{T}^2/{\sim})\rtimes SL_2(\mathbb{Z}))q\overset{\pi}{\cong}L^{\infty}(\mathbb{T}^2/{\sim})\rtimes PSL_2(\mathbb{Z})\\
&\quad\quad au_g\overset{q\cdot q}{\longmapsto} q(au_g)q\overset{\pi}{\longmapsto} au_{\bar{g}}.
\end{align*}
To check $\phi$ is a well-defined $*$-homomorphism, it suffices to show that the above map $q\cdot q$ is a $*$-homomorphism and the map $\pi$ is an isomorphism.

The first part holds since $q$ lies in the center of $L^{\infty}(\mathbb{T}^2/{\sim})\rtimes SL_2(\mathbb{Z})$. To check the second part holds, one defines a unitary $U: q\ell^2(SL_2(\Z))\cong \ell^2(PSL_2(\Z))$ by $U(q\delta_g)=\delta_{\bar{g}}/\sqrt{2}$. Let us check $U$ is well-defined. Assume $q(\sum_g\lambda_g\delta_g)=0$, i.e. $\sum_g\lambda_g\frac{\delta_g+\delta_{-g}}{2}=0$, or equivalently, $\sum_g\frac{\lambda_g+\lambda_{-g}}{2}\delta_g=0$, we need to show $\sum_g\lambda_g\delta_{\bar{g}}=0$. Clearly, this holds iff $\lambda_g+\lambda_{-g}=0$ for all $g\in SL_2(\Z)$ iff $q(\sum_g\lambda_g\delta_g)=0$. Then it is easy to check that $\pi=Ad(\hat{U})|_{q(L^{\infty}(\mathbb{T}^2/{\sim})\rtimes SL_2(\mathbb{Z}))q}$, where $\hat{U}: q(L^2({\mathbb{T}^2/{\sim}}))\bar{\otimes}\ell^2(SL_2(\Z)))\cong L^2(\mathbb{T}^2/{\sim})\bar{\otimes}\ell^2(PSL_2(\Z))$ is the unitary operator $Id\otimes U$, i.e. $\hat{U}(q(\xi\otimes\delta_g))=\xi\otimes \delta_{\bar{g}}/\sqrt{2}$ for all $\xi\in L^2(\mathbb{T}^2/{\sim})$ and all $g\in SL_2(\Z)$.

Notice that $\phi|_{L^{\infty}(\mathbb{T}^2/{\sim})}=id$, a standard argument using $\phi$ shows that relative property (T) for the inclusion $L^{\infty}(\mathbb{T}^2/{\sim})\subset L^{\infty}(\mathbb{T}^2/{\sim})\rtimes SL_2(\mathbb{Z})$ transfers to the inclusion $L^{\infty}(\mathbb{T}^2/{\sim})\subset L^{\infty}(\mathbb{T}^2/{\sim})\rtimes PSL_2(\mathbb{Z})$.
\end{proof}

\iffalse
\begin{remark}
To show $L^{\infty}(\mathbb{T}^2/{\sim})\subset L^{\infty}(\mathbb{T}^2/{\sim})\rtimes PSL_2(\mathbb{Z})$ has relative property (T) in the above proof, we can also use the following lemma in order to reduce it to the known fact that $L^{\infty}(\mathbb{T}^2/{\sim})\subset L^{\infty}(\mathbb{T}^2/{\sim})\rtimes SL_2(\mathbb{Z})$ has relative property (T). However, we decided to use the one presented above since the concrete map $\pi$ and the projection $q=\frac{u_{id}+u_{-id}}{2}$ we introduced would also play important roles in the proof of Theorem \ref{thm: complete description of intermediate vn algs} and Theorem \ref{thm: SL_2(Z)-thm}. 
\end{remark}
As explained above, we skip the proof of the following lemma since we do not use it in this paper.
\begin{lemma}
Let $G\overset{\pi}{\twoheadrightarrow} H$ be a surjective group homomorphism between two countable discrete groups $G$ and $H$. Let $H\overset{\alpha}{\curvearrowright} Y$ be a p.m.p. action. If $L^{\infty}(Y)\subset L^{\infty}(Y)\rtimes_{\alpha\circ \pi} G$ has relative property (T), then $L^{\infty}(Y)\subset L^{\infty}(Y)\rtimes_{\alpha} H$ also has relative property (T). 
\end{lemma}
\fi

\subsection{Non-factor inclusion: $L(SL_2(\mathbb{Q}))\subset                                                                                                                                                                                                                                                                                                                                                                                                                                                                                                                                                                                                                                                                                                                                                                                                                                                                                            L^{\infty}(Y)\rtimes SL_2(\mathbb{Q})$}\label{subsection: Q-case, non-factor}

In this subsection, we show how to use results in the previous subsection to prove Theorem \ref{thm: complete description of intermediate vn algs} and Corollary \ref{cor: L(SL_2(Q)) is mHAP}.

\begin{proof}[Proof of Theorem \ref{thm: complete description of intermediate vn algs}]

\textbf{Step 1}: preparations and setting up notations.

Let $E$ be the trace preserving conditional expectation from $(L(\mathbb{Q}^2\rtimes G),\tau)$ onto $(P,\tau|_P)$, where $\tau$ denotes the canonical trace on $L(\mathbb{Q}^2\rtimes G)$. Note that $L(G)\subset P\subset A\rtimes G\subset L(\mathbb{Q}^2\rtimes G)$ and recall that $g.v$ denotes the matrix multiplication between $g\in G$ and $v\in \mathbb{Q}^2$.

Let $c_v:=u_v^*E(u_v)\in L(u_v^*Gu_v\cap G)'\cap L(\mathbb{Q}^2\rtimes G)$ for each $v\in\mathbb{Q}^2$. Besides, by a similar argument used to prove Claim 1 in Step 1 in the proof of Theorem \ref{thm: no intermediate subalgs}, we know $c_{e_1}\in L(\big(\begin{smallmatrix}
\mathbb{Q}\\0
\end{smallmatrix}\big)\rtimes \pm\big(\begin{smallmatrix}
1& \mathbb{Q}\\
0& 1
\end{smallmatrix}\big))$. Moreover, the following hold.
\begin{align}
\sigma_v(c_v)&=c_{-v}^*, ~\mbox{where $\sigma_v(\cdot):=u_v\cdot u_v^*$}.\label{eq: relation btw c_a and c_-a}\\
c_{g.v}&=\sigma_g(c_v)~\mbox{for all $g\in G$, where $\sigma_g(\cdot)=u_g\cdot u_g^*$}.\label{eq: relation btw c_g.a and c_a}\\
E(\sigma_v(c_v)u_{v+g.w})&=\sigma_v(c_v)u_{v+g.w}\sigma_g(c_w)~\mbox{for all $v$, $w\in \mathbb{Q}^2$ and $g\in G$}.\label{eq: key restriction for E(c_a) in proving maximality}
\end{align}
Indeed, the first identity can be proved by computing both sides of $E(u_v)=E(u_{-v})^*$.

For the second one, observe that $u_{g.v}c_{g.v}=E(u_{g.v})=E(u_gu_vu_g^{-1})=u_gE(u_v)u_g^*=\sigma_g(u_vc_v)=u_{g.v}\sigma_g(c_v)$.

For the last one, observe that $E(E(u_vu_g)u_w)=E(u_vu_g)E(u_w)$, then we compute both sides of this identity to deduce (\ref{eq: key restriction for E(c_a) in proving maximality}):
\begin{align*}
E(E(u_vu_g)u_w)&=E(E(u_v)u_gu_w)=E(u_vc_vu_gu_w)=E((u_vc_vu_v^*)u_v(u_gu_wu_g^*)u_g)\\
&=E(\sigma_v(c_v)u_{v+g.w})u_g,\\
E(u_vu_g)E(u_w)&=E(u_v)u_gE(u_w)=u_vc_vu_gu_wc_w=(u_vc_vu_v^*)u_v(u_gu_wu_g^*)(u_gc_wu_g^*)u_g\\
&=\sigma_v(c_v)u_{v+g.w}\sigma_g(c_w)u_g.
\end{align*}

%One can check that (\ref{eq: key restriction for E(c_a) in proving maximality}) holds iff it holds for the special case $a=e_1$ and $g=id$. This means we just need to care about the value of $b$. 

Now, we may write 
\begin{align}\label{eq: def of c_e_1}
c_{e_1}=\sum_{x, y}\lambda_{x, y}\big(\begin{smallmatrix}
x\\
0
\end{smallmatrix}\big)\big(\begin{smallmatrix}
1&y\\
0&1
\end{smallmatrix}\big)+\sum_{x, y}\mu_{x, y}\big(\begin{smallmatrix}
x\\
0
\end{smallmatrix}\big)\big(\begin{smallmatrix}
-1&y\\
0&-1
\end{smallmatrix}\big).
\end{align}

\textbf{Step 2}: find restrictions on $\lambda_{x, y}$ and $\mu_{x, y}$ only using $LG\subset P\subset L(\Q^2\rtimes G)$.

First, note that $u_g\in P$ implies $\langle u_{e_1}-E(u_{e_1}), u_g \rangle=0$ for all $g\in G$.
For any $b\in \Q$, set $g=\big(\begin{smallmatrix}
1&b\\0&1
\end{smallmatrix}\big)$. Then, $0=\tau(u_{e_1}u_g^*)=\tau(E(u_{e_1})u_g^*)=\tau(u_{e_1}c_{e_1}u_g^*)=\lambda_{-1, b}$.
Similarly, set $g=\big(\begin{smallmatrix}
-1&b\\0&-1
\end{smallmatrix}\big)$, we deduce that $0=\mu_{-1, b}$.
Hence, we have shown the following hold:
\begin{align}\label{eq: lambda_-1, b=0=mu_-1, b}
\lambda_{-1, b}=0=\mu_{-1, b}~\mbox{for all}~ b\in\Q.
\end{align}

Second, plug $v=e_1$ into (\ref{eq: relation btw c_a and c_-a}), we get $\sigma_{e_1}(c_{e_1})=c_{-e_1}^*$. On the one hand, 
\begin{align*}
 \sigma_{e_1}(c_{e_1})=u_{e_1}c_{e_1}u_{e_1}^*\overset{(\ref{eq: def of c_e_1})}{=}
\sum_{x, y}\lambda_{x, y}\big(\begin{smallmatrix}
x\\
0
\end{smallmatrix}\big)\big(\begin{smallmatrix}
1&y\\
0&1
\end{smallmatrix}\big)+\sum_{x, y}\mu_{x, y}\big(\begin{smallmatrix}
x+2\\
0
\end{smallmatrix}\big)\big(\begin{smallmatrix}
-1&y\\
0&-1
\end{smallmatrix}\big).
\end{align*}
On the other hand,
\begin{align*}
c_{-e_1}^*
\overset{(\ref{eq: relation btw c_g.a and c_a})}{=}(u_{-id}c_{e_1}u_{-id}^*)^*
\overset{(\ref{eq: def of c_e_1})}{=}\sum_{x, y}\overline{\lambda_{x, y}}\big(\begin{smallmatrix}
x\\
0
\end{smallmatrix}\big)\big(\begin{smallmatrix}
1&-y\\
0&1
\end{smallmatrix}\big)+\sum_{x, y}\overline{\mu_{x, y}}\big(\begin{smallmatrix}
-x\\
0
\end{smallmatrix}\big)\big(\begin{smallmatrix}
-1&-y\\
0&-1
\end{smallmatrix}\big).
\end{align*}
By comparing the above two expressions, we deduce that
\begin{align}\label{eq: conjugation restriction on coefficients}
\lambda_{x, y}=\overline{\lambda_{x, -y}}~\mbox{and}~
\mu_{x, y}=\overline{\mu_{-x-2, -y}}~\mbox{for all $x$, $y\in \Q$}.
\end{align}

Third, by plugging $g=id$, $v=e_1$ and $w=e_2$ into (\ref{eq: key restriction for E(c_a) in proving maximality}), we compute both sides of (\ref{eq: key restriction for E(c_a) in proving maximality}) to get the following
\begin{align*}
\mbox{RHS of (\ref{eq: key restriction for E(c_a) in proving maximality})}
&=\sigma_{e_1}(c_{e_1})u_{e_1+e_2}c_{e_2}\\
&=u_{e_1}c_{e_1}u_{e_2}\big(\begin{smallmatrix}
0&-1\\1&0
\end{smallmatrix}\big)c_{e_1}\big(\begin{smallmatrix}
0&1\\-1&0
\end{smallmatrix}\big)~\mbox{(by def. of $\sigma_{e_1}(\cdot)$, (\ref{eq: relation btw c_g.a and c_a}) and $e_2=\big(\begin{smallmatrix}0&-1\\1&0 \end{smallmatrix}\big)e_1$)}\\   
&\overset{(\ref{eq: def of c_e_1})}{=}\sum_{x, y, a, b}\lambda_{x, y}\lambda_{a, b}\big(\begin{smallmatrix}
x+1+(a+1)y\\ 1+a
\end{smallmatrix}\big)\big(\begin{smallmatrix}
1-by&y\\-b&1
\end{smallmatrix}\big)
+\sum_{x, y, a, b}\lambda_{x, y}\mu_{a, b}\big(\begin{smallmatrix}
x+1+(a+1)y\\ 1+a
\end{smallmatrix}\big)\big(\begin{smallmatrix}
-1-by&-y\\-b&-1
\end{smallmatrix}\big)\\
&+\sum_{x, y, a, b}\mu_{x, y}\lambda_{a, b}\big(\begin{smallmatrix}
x+1+(a+1)y\\ -1-a
\end{smallmatrix}\big)\big(\begin{smallmatrix}
-1-by&y\\b&-1
\end{smallmatrix}\big)+\sum_{x, y, a, b}\mu_{x, y}\mu_{a, b}\big(\begin{smallmatrix}
x+1+(a+1)y\\ -1-a
\end{smallmatrix}\big)\big(\begin{smallmatrix}
1-by&-y\\b&1
\end{smallmatrix}\big).
\end{align*}
\begin{align*}
\mbox{LHS of (\ref{eq: key restriction for E(c_a) in proving maximality})}=E(u_{e_1}c_{e_1}u_{e_2})
\overset{(\ref{eq: def of c_e_1})}{=}\sum_{x, y}\lambda_{x, y}E\big[\big(\begin{smallmatrix}
1+x+y\\1
\end{smallmatrix}\big)\big]\big(\begin{smallmatrix}
1&y\\0&1
\end{smallmatrix}\big)+\sum_{x, y}\mu_{x, y}E\big[\big(\begin{smallmatrix}
1+x+y\\-1
\end{smallmatrix}\big)\big]\big(\begin{smallmatrix}
-1&y\\0&-1
\end{smallmatrix}\big).
\end{align*}

Then, notice that we can compute both $E\big[\big(\begin{smallmatrix}
1+x+y\\1
\end{smallmatrix}\big)\big]$ and $E\big[\big(\begin{smallmatrix}
1+x+y\\-1
\end{smallmatrix}\big)\big]$ in terms of $c_{e_1}$ by first applying the definition $E(u_{g.v})=u_{g.v}c_{g.v}$ and then applying (\ref{eq: relation btw c_g.a and c_a}) by taking $v=e_1$ and $g=\big(\begin{smallmatrix}
1+x+y&x+y\\1&1
\end{smallmatrix}\big)$ and $g=\big(\begin{smallmatrix}
1+x+y&-x-y\\-1&1
\end{smallmatrix}\big)$ respectively.
After these substitutions, we deduce that
\begin{align*}
\mbox{LHS of (\ref{eq: key restriction for E(c_a) in proving maximality})}
&\overset{(\ref{eq: def of c_e_1})}{=}\sum_{x, y, a, b}\lambda_{x, y}\lambda_{a, b}\big(\begin{smallmatrix}
(a+1)(1+x+y)\\a+1
\end{smallmatrix}\big)\big(\begin{smallmatrix}
1-b(1+x+y)& y+b(x+1)(1+x+y)\\
-b&1+b(x+1)
\end{smallmatrix}\big)\\
&+\sum_{x, y, a, b}\lambda_{x, y}\mu_{a, b}\big(\begin{smallmatrix}
(a+1)(1+x+y)\\a+1
\end{smallmatrix}\big)\big(\begin{smallmatrix}
-1-b(1+x+y)& -y+b(x+1)(1+x+y)\\
-b&-1+b(x+1)
\end{smallmatrix}\big)\\
&+\sum_{x, y, a, b}\mu_{x, y}\lambda_{a, b}\big(\begin{smallmatrix}
(a+1)(1+x+y)\\-a-1
\end{smallmatrix}\big)\big(\begin{smallmatrix}
-1-b(1+x+y)& y-b(x+1)(1+x+y)\\
b&-1+b(x+1)
\end{smallmatrix}\big)\\
&+\sum_{x, y, a, b}\mu_{x, y}\mu_{a, b}\big(\begin{smallmatrix}
(a+1)(1+x+y)\\-a-1
\end{smallmatrix}\big)\big(\begin{smallmatrix}
1-b(1+x+y)& -y-b(x+1)(1+x+y)\\
b&1+b(x+1)
\end{smallmatrix}\big).
\end{align*}

Now, for any given $s$, $a\neq -1$, $b$ and $y$ in $\Q$, we compare the coefficients of both sides of (\ref{eq: key restriction for E(c_a) in proving maximality}) in front of the term $\big(\begin{smallmatrix}
s\\a+1
\end{smallmatrix}\big)\big(\begin{smallmatrix}
1-by&y\\-b&1
\end{smallmatrix}\big)$.

For the RHS, the coefficient is $\lambda_{s-1-(a+1)y, y}\lambda_{a, b}+\mu_{s-1-(a+1)y, -y}\mu_{-2-a, -b}$, which equals the coefficient on the LHS, i.e.
\begin{align}\label{system of equations: eq3-eq4}
\begin{cases}
\lambda_{\frac{s}{a+1}-y-1, y}\lambda_{a, 0}+\mu_{-a-2, 0}\mu_{y-1-\frac{s}{a+1}, -y}~&~\mbox{if}~ s\neq 0~\mbox{and}~ b=0 \\
0, & ~\mbox{if}~y\neq  \frac{s}{a+1}+\frac{2}{b}, \frac{s}{a+1}~\mbox{and}~ bs\neq 0.
\end{cases}
\end{align}

Next,  for any given $s$, $a\neq -1$, $b$ and $y$ in $\Q$, we compare the coefficients of both sides of (\ref{eq: key restriction for E(c_a) in proving maximality}) in front of the term $\big(\begin{smallmatrix}
s\\-a-1
\end{smallmatrix}\big)\big(\begin{smallmatrix}
-1-by&y\\b&-1
\end{smallmatrix}\big)$.

For the RHS, the coefficient is $\lambda_{s-1-(a+1)y, -y}\mu_{-2-a, -b}+\mu_{s-1-(a+1)y, y}\lambda_{a, b}$, which equals the coefficient on the LHS, i.e.
\begin{align}\label{system of equations: eq7-eq8}
\begin{cases}
\lambda_{y-\frac{s}{a+1}-1, -y}\mu_{-a-2, 0} +\mu_{\frac{s}{a+1}-1-y, y}\lambda_{a, 0}, &~\mbox{if}~ s\neq 0~\mbox{and}~ b=0\\
 0, & ~\mbox{if}~y\neq  \frac{s}{a+1}-\frac{2}{b}, \frac{s}{a+1}~\mbox{and}~ bs\neq 0.
\end{cases}
\end{align}

Till now, we have only used the assumption that $LG\subset P\subset L(\Q^2\rtimes G)$. 
In order to solve the above system of equations (\ref{system of equations: eq3-eq4})-(\ref{system of equations: eq7-eq8}) effectively, we explore the fact that $LG\subset P\subset A\rtimes G$ to get direct relations between $\lambda$-coefficients and $\mu$-coefficients.

\textbf{Step 3}: find more restrictions on $\lambda_{x, y}$ and $\mu_{x, y}$ by exploring $LG\subset P\subset A\rtimes G$.

First, by combining the definition of $A$, (\ref{eq: def of c_e_1}) and the fact $E(u_{e_1})=u_{e_1}c_{e_1}\in P$, we get 
\begin{align}\label{eq: lambda is symmetric along x=-1}
\lambda_{x, y}=\lambda_{-2-x, y}~\mbox{and}~\mu_{x, y}=\mu_{-2-x, y},\forall~x,~\forall~y.
\end{align}

Then, notice that by a similar argument used in the proof of Corollary \ref{cor: max Haagerup for LG}, we know that $(qLG\subset q(A\rtimes G))\cong (L(\bar{G})\subset A\rtimes \bar{G})$, where $q=\frac{u_{id}+u_{-id}}{2}$. 
Then by Theorem \ref{thm: no intermediate subalgs}, we know that $qLG$ is maximal inside $q(A\rtimes G)$, hence $qP\in\{qLG, q(A\rtimes G)\}$. Our goal is to show that $(1-q)P=(1-q)LG$ or $(1-q)(A\rtimes G)$.

\textbf{Claim}:  $\lambda_{x, y}+\mu_{x, -y}=0$, $\forall~x$, $\forall~y\neq 0$ and $\lambda_{x,0}+\mu_{x, 0}=0$ if $x\neq 0$ or $-2$.
\begin{proof}[Proof of the Claim]
We need to consider two cases.

\textbf{Case 1}: $qP=qLG$.

By the definition of $q$, one can check that 
\[qLG\subseteq \{\sum_g\lambda_gu_g: \lambda_g=\lambda_{-g}\in \C,~\forall~g\in G\}\cap \ell^2(G).\]
Then, by using (\ref{eq: def of c_e_1}), we get $qE(u_{e_1})$, as an element in $qP=qLG$, is equal to
\begin{align*}
\quad qu_{e_1}c_{e_1}\overset{(\ref{eq: def of c_e_1})}{=}\sum_{x, y}\frac{\lambda_{x-1,y}+\mu_{-x-1,-y}}{2}\big(\begin{smallmatrix}
x\\0
\end{smallmatrix}\big)\big(\begin{smallmatrix}
1&y\\0&1
\end{smallmatrix}\big)+\sum_{x, y}\frac{\lambda_{-x-1, -y}+\mu_{x-1, y}}{2}\big(\begin{smallmatrix}
x\\0
\end{smallmatrix}\big)\big(\begin{smallmatrix}
-1&y\\0&-1
\end{smallmatrix}\big).
\end{align*}
Therefore, $0=\lambda_{x-1, y}+\mu_{-x-1, -y}\overset{(\ref{eq: lambda is symmetric along x=-1})}{=}\lambda_{x-1, y}+\mu_{x-1, -y}$ for all $x\neq 0$; equivalently, $\lambda_{x, y}+\mu_{x, -y}=0$ for all $x\neq -1$ and all $y$. By (\ref{eq: lambda_-1, b=0=mu_-1, b}), this finishes the proof.

\textbf{Case 2}: $qP=q(A\rtimes G)$.

Note that $\langle u_{e_1}-E(u_{e_1}), P \rangle=0$, in particular, we have
\begin{align*}
\langle u_{e_1}-E(u_{e_1}), q(\big(\begin{smallmatrix}
a\\0
\end{smallmatrix}\big)+\big(\begin{smallmatrix}
-a\\0
\end{smallmatrix}\big))u_g \rangle=0,~\forall~0\neq a\in\Q,~\forall~g\in G.
\end{align*}
For any $s\in \mathbb{Q}$, set $g=\big(\begin{smallmatrix}
1&s\\0&1
\end{smallmatrix}\big)$, then a calculation by using the above identity and (\ref{eq: def of c_e_1}) shows that
$\lambda_{a-1, s}+\lambda_{-a-1, s}+\mu_{a-1, -s}+\mu_{-a-1, -s}=\begin{cases}
1, &~\mbox{if}~ s=0~\mbox{and}~a=\pm 1\\
0, &~\mbox{otherwise}.
\end{cases}$

By using (\ref{eq: lambda is symmetric along x=-1}), we know that the above is equivalent to 
\begin{align*}
\lambda_{a-1, s}+\mu_{a-1, -s}&=0,~ \forall ~a, \forall~ s\neq 0,\\
\lambda_{x, 0}+\mu_{x, 0}&=0,~ \forall~ x\neq 0, -2,\\
\lambda_{x, 0}+\mu_{x, 0}&=\frac{1}{2}, ~\mbox{if}~x=0, -2.
\end{align*}
Therefore, the Claim is proved and we always have 
\begin{align}\label{eq: relation between lambda and mu by symmetry}
\lambda_{x,y}+\mu_{x,-y}=0,~\forall~ x~ \forall~ y\neq 0~\mbox{and}~\lambda_{x, 0}+\mu_{x, 0}=0,~\forall~ x\neq 0, -2.
\end{align}
\end{proof}
\textbf{Step 4}: solve for the $\lambda$- and $\mu$-coefficients.

By (\ref{eq: relation between lambda and mu by symmetry})-(\ref{eq: lambda is symmetric along x=-1}), we can simplify the 2nd case in (\ref{system of equations: eq3-eq4})-(\ref{system of equations: eq7-eq8}) to the following respectively:
\begin{align*}
\lambda_{s-1-(a+1)y, y}\lambda_{a, b}=0,~ \forall~ bs\neq 0, ~\forall~ a\neq -1, ~\forall~ y\neq 0, \frac{s}{a+1}, \frac{s}{a+1}+\frac{2}{b},\\
\lambda_{s-1-(a+1)y, -y}\lambda_{a, b}=0,~ \forall~ bs\neq 0,~ \forall ~a\neq -1, ~\forall~ y\neq 0, \frac{s}{a+1}, \frac{s}{a+1}-\frac{2}{b}.
 \end{align*}
Assume $\lambda_{a, b}\neq 0$ for some $a\neq -1$ and $b\neq 0$. Clearly, $(s-1-(a+1)y, y)=(a, b)$ iff $s=(a+1)(1+y), y=b$; similarly, $(s-1-(a+1)y, -y)=(a, b)$ iff $s=(a+1)(1+y), y=-b$. One can check that for such choices of $(s, y)$, the above expressions become
\begin{align*}
\lambda_{a, b}=0~\mbox{if}~b\neq 0, -1, -2~\mbox{or}~b\neq 0, 1, 2.
\end{align*}
Therefore, we deduce that $\lambda_{a, b}=0$ for all $b\neq 0$, a contradiction. Hence, we have proved that
\begin{align}\label{eq: lambda_a, b=0 for a neq -1 and b nonzero}
\lambda_{a, b}=0,~\forall~a\neq -1~\mbox{and}~b\neq 0.
\end{align}

From (\ref{eq: lambda_-1, b=0=mu_-1, b}) and (\ref{eq: lambda_a, b=0 for a neq -1 and b nonzero}), we deduce $\lambda_{a, b}=0$ unless $b=0$. We are left to solve for $\lambda_{a, 0}$.

Plugging $y=0$ into the 1st case in both (\ref{system of equations: eq3-eq4}) and (\ref{system of equations: eq7-eq8}), we deduce that 
\begin{align*}
\lambda_{s-1, 0}\lambda_{a,0}+\mu_{s-1, 0}\mu_{-a-2, 0}=\lambda_{\frac{s}{a+1}-1,0}\lambda_{a, 0}+\mu_{-a-2, 0}\mu_{-1-\frac{s}{a+1}, 0},~\forall~s\neq 0, a\neq -1,\\
\lambda_{s-1, 0}\mu_{-2-a, 0}+\mu_{s-1,0}\lambda_{a, 0}=\lambda_{-\frac{s}{a+1}-1, 0}\mu_{-a-2, 0}+\mu_{\frac{s}{a+1}-1, 0}\lambda_{a, 0},~\forall~s\neq 0, a\neq -1.
\end{align*}

Again, by using (\ref{eq: relation between lambda and mu by symmetry})-(\ref{eq: lambda is symmetric along x=-1}), we can simplify the above expressions to the following:
\begin{align*}
(\lambda_{s-1, 0}-\lambda_{\frac{s}{a+1}-1, 0})\lambda_{a, 0}=(\mu_{s-1,0}-\mu_{\frac{s}{a+1}-1,0})\lambda_{a, 0},~\forall s\neq0,~\forall~ a\neq -1, 0, -2.
\end{align*}
Assume $\lambda_{a, 0}\neq 0$ for some $a\not\in \{-2, -1, 0\}$, then the above identity becomes
$\lambda_{s-1, 0}-\mu_{s-1, 0}=\lambda_{\frac{s}{a+1}-1, 0}-\mu_{\frac{s}{a+1}-1, 0}$ for all $s\neq 0$; equivalently,
$\lambda_{s-1, 0}-\mu_{s-1, 0}=\lambda_{s(a+1)-1, 0}-\mu_{s(a+1)-1, 0}$ for all $s\neq 0$.
Hence,
\begin{align*}
\lambda_{s-1, 0}-\mu_{s-1, 0}&=\lambda_{s(a+1)-1, 0}-\mu_{s(a+1)-1, 0}\\
&=\lambda_{s(a+1)^2-1, 0}-\mu_{s(a+1)^2-1, 0}\\
&=\cdots\\
&=\lambda_{s(a+1)^n-1, 0}-\mu_{s(a+1)^n-1, 0},~ \forall~n\geq 1.
\end{align*}
Since $|a+1|\neq 1$, $s\neq 0$ and $a\in\Q$, we know $s(a+1)^n-1\neq s(a+1)^m-1$ for all $n\neq m$. Moreover, as $\sum_s|\lambda_{s,0}-\mu_{s,0}|^2\leq 2(\sum_s|\lambda_{s,0}|^2+\sum_s|\mu_{s,0}|^2)<\infty$, the above expression implies $\lambda_{s-1, 0}-\mu_{s-1,0}=0$ for all $s\neq 0$. Now by taking $s=a+1$, we get $0=\lambda_{a, 0}-\mu_{a, 0}=2\lambda_{a, 0}$ by (\ref{eq: relation between lambda and mu by symmetry}), a contradiction. Hence, $\lambda_{a, 0}=0$ for all $a\not\in \{0, -1, -2\}$.

To sum up, (\ref{eq: lambda_-1, b=0=mu_-1, b}) and the above tell us that $\lambda_{x, y}=0$ if $(x, y)\neq (0, 0)$, $(-2, 0)$. By (\ref{eq: relation between lambda and mu by symmetry}), this also implies that $\mu_{x, y}=0$ if $(x, y)\neq (0, 0)$, $(-2, 0)$. 
Also note that $\lambda_{0, 0}=\lambda_{-2, 0}:=\lambda$, $\mu_{0, 0}=\mu_{-2, 0}:=\mu$ by (\ref{eq: lambda is symmetric along x=-1}). Moreover, both $\lambda$ and $\mu$ are real numbers by (\ref{eq: conjugation restriction on coefficients}). Besides, $\lambda=-\mu$ if $qP=qLG$ and $\lambda+\mu=\frac{1}{2}$ if $qP=q(A\rtimes G)$ from the proof of (\ref{eq: relation between lambda and mu by symmetry}).

\textbf{Step 5}: prove $(1-q)P=(1-q)LG$ or $(1-q)(A\rtimes G)$.

From Step 4, we can simplify (\ref{eq: def of c_e_1}) to get
\begin{align*}
E(u_{e_1})=u_{e_1}c_{e_1}=\lambda(u_{e_1}+u_{-e_1})+\mu(u_{e_1}+u_{-e_1})u_{-id}.
\end{align*}
From the above and (\ref{eq: relation btw c_g.a and c_a}), we infer that 
\begin{align*}
E(u_v)=\lambda(u_{v}+u_{-v})+\mu(u_v+u_{-v})u_{-id},~\forall~0\neq v\in \Q^2.
\end{align*}
So $(1-q)P\ni (1-q)E(u_v+u_{-v})=(2\lambda-2\mu)(u_v+u_{-v})(1-q)$. Hence $(1-q)(u_v+u_{-v})\in (1-q)P$ if $\lambda\neq \mu$. Since $\{(1-q)(u_v+u_{-v}): v\in \Q^2\}$ linearly spans a dense subset in $(1-q)A$ and $q\in P$, this implies that $(1-q)P=(1-q)(A\rtimes G)$ if $\lambda\neq \mu$. If $\lambda=\mu$, then it is easy to see $(1-q)P=(1-q)LG$. So we have proved that $(1-q)P\in \{(1-q)LG, (1-q)(A\rtimes G)\}$.
\end{proof}
%Now, we prove Corollary \ref{cor: L(SL_2(Q)) is mHAP}.

Now, let us prove Corollary \ref{cor: L(SL_2(Q)) is mHAP}.

\begin{proof}[Proof of Corollary \ref{cor: L(SL_2(Q)) is mHAP}]
Recall that $G=SL_2(\mathbb{Q})$. Let $P$ be any intermediate von Neumann subalgebra with Haagerup property between $LG$ and $L(\Q^2\rtimes G)$. Define $\phi\in Aut(L(\Q^2\rtimes G))$ by setting $\phi=Ad(u_g)$, where $g=\big(\begin{smallmatrix}
-1&0\\
0&-1
\end{smallmatrix}\big)$. Clearly, $\phi^2=id$ and $Fix(\phi)=A\rtimes G$. Let $P_0=P\cap Fix(\phi)\subset A\rtimes G$. Moreover, notice that $\phi(P)=P$ as $u_g\in P$, so we can view $\phi$ as an automorphism on $P$.

Since $P$ has Haagerup property, we know $P_0$ also has Haagerup property. Then, using Theorem \ref{thm: complete description of intermediate vn algs}, we know that $P_0=LG$. Indeed, this is because both $qP_0$ and $(1-q)P_0$ have Haagerup property by \cite[Theorem 2.3]{jolissaint}, which implies that $qP_0=q(LG)$ and $(1-q)P_0=(1-q)(LG)$. To see these two equalities hold, notice that $A\rtimes G$ contains $(A\cap L(\Z^2))\rtimes SL_2(\Z)$, which has relative (T) with respect to the diffuse subalgebra $A\cap L(\Z^2)$ by the proof of Corollary \ref{cor: max Haagerup for LG}. Hence $(1-q)((A\cap L(\Z^2))\rtimes SL_2(\Z))$ (resp. $q((A\cap L(\Z^2))\rtimes SL_2(\Z))$) has relative (T) with respect to $(1-q)(A\cap L(\Z^2))$ (resp. $q(A\cap L(\Z^2))$), say by \cite[Proposition 4.7]{popa_annals}. This implies neither $(1-q)((A\cap L(\Z^2))\rtimes SL_2(\Z))$ nor $q((A\cap L(\Z^2))\rtimes SL_2(\Z))$ has the Haagerup property, so neither $(1-q)(A\rtimes G)$ nor $q(A\rtimes G)$ has the Haagerup property.

Now, we are left to show that $P=LG$. The strategy is similar to the proof of \cite[Corollary 3.14]{chifan_das}.

Since $P'\cap P\subset (LG)'\cap L(\mathbb{Q}^2\rtimes G)=\mathbb{C}+\mathbb{C}u_{-id}= q\mathbb{C}\oplus (1-q)\mathbb{C}$, 
the center of $P$ has dimension less or equal to two. Note that $P_0=P^{\{id, \phi\}}$, 
we deduce the Pimsner-Popa index $[P: P_0]<\infty$ by \cite[Theorem 3.2]{jones_xu}. 

To see the above holds, we first take this opportunity to correct several misprints in \cite[Theorem 3.2]{jones_xu}: (1) in the statement of the theorem, $M$ should be $N$, i.e. $\mathcal{A}$ is a finite set of automorphisms of the finite factor $N$; (2) in its proof, to make $P\cap Q=\{\oplus_{\alpha}x|x\in N^{\mathcal{A}}\}$ hold, one implicitly assumes $id\in \mathcal{A}$; (3) in the 4th line of the proof, the 2nd $N$ should be $M$, i.e. it should read as ``\ldots iff the index of $P\cap Q$ in $M$ is finite.". After correcting these misprints, we notice that the proof of \cite[Theorem 3.2]{jones_xu} still works under the weaker assumption that $N$ (in this theorem) is a direct sum of finitely many finite factors as it relies on \cite[Theorem 3.1]{jones_xu}. Finally, we can apply this theorem by taking $N=P$ and $\mathcal{A}=\{id, \phi\}$. Indeed, this is because $\phi\in Aut(P)$ and the spectrum of the operator $\phi$ is finite. %and  %(\textcolor{red}{to be checked, done!}). %key: $dim_{\Delta(B)}(B\oplus B)=2$.}). 
 
Since $P_0=LG\subset P\subset L(\Q^2\rtimes G)$, the above implies $P\subset \mathcal{QN}_{L(\Q^2\rtimes G)}(LG)''$. Here, for any finite von Neumann algebras $N\subset M$, $\mathcal{QN}_M(N)$ denotes the quasi-normalizers which is defined as the *-subalgebra of $M$ consisting of all elements $x\in M$ such that there exist $x_1$, $x_2, \dots, x_k\in M$ such that $Nx\subset \sum_ix_iN$ and $xN\subset \sum_iNx_i$ \cite{Po99}.

By Proposition 6.10 in the arXiv version of \cite{ioana_duke}, we know that $\mathcal{QN}_{L(\Q^2\rtimes G)}(LG)''=L^{\infty}(X_c)\rtimes G$, where $G\curvearrowright X_c$ is the maximal compact factor of $G\curvearrowright \widehat{\Q^2}$. Since $G\curvearrowright \widehat{\Q^2}$ is weakly mixing by the proof of Corollary \ref{cor: weakly mixing prime action}, we deduce that $X_c$ is a singleton, say by the proof of \cite[Theorem 2.28]{kerrli_book}, hence $LG\subset P\subset LG$, i.e. $P=LG$.   
This proves that $LG$ is maximal Haagerup inside $L(\Q^2\rtimes G)$.
\end{proof}

\section{Complete description of intermediate von Neumann subalgebras: $\Z$-coefficient}\label{section: Z-case}

In this section, we check that after certain modifications, results in the previous section for $\Q$-coefficient groups also hold for the corresponding $\Z$-coefficient groups. To state the results precisely, we need the following notation.
Denote by $B$ the following von Neumann subalgebra of $L(\mathbb{Z}^2)$:
\[B=\big\{\sum_{x, y}\lambda_{x, y}u_{x, y}: \lambda_{x, y}=\lambda_{-x,-y},~\forall~ x, y\in \Z\big\}\cap L(\mathbb{Z}^2).\]
Observe that $B\rtimes SL_2(\Z)\cong L^{\infty}(\widehat{\Z^2}/{\sim})\rtimes SL_2(\Z)$ and $B\rtimes PSL_2(\Z)\cong L^{\infty}(\widehat{\Z^2}/{\sim})\rtimes PSL_2(\Z)$. Here, $SL_2(\Z)\curvearrowright \widehat{\Z^2}/{\sim}$ is the quotient action of $SL_2(\Z)\curvearrowright \widehat{\Z^2}$ by modding out the relation $\phi\sim\phi'$, where $\phi$, $\phi'\in \widehat{\Z^2}\cong \mathbb{T}^2\cong [-\frac{1}{2}, \frac{1}{2}]^2$ and $\phi'(x, y):=\phi(-x, -y)$ for all $(x, y)\in \mathbb{Z}^2$. Then notice that $SL_2(\Z)\curvearrowright \widehat{\Z^2}/{\sim}$ descends to a $PSL_2(\Z)$-action, i.e. $PSL_2(\Z)\curvearrowright \widehat{\Z^2}/{\sim}$ by modding out the kernel of the action.

The main result in this section is the following theorem.
\begin{theorem}\label{thm: SL_2(Z)-thm}
If $P$ is a von Neumann algebra between $L(SL_2(\Z))$ and $B\rtimes SL_2(\Z)$, then \[P=q[(B\cap L(n\Z^2))\rtimes SL_2(\mathbb{Z})]\oplus (1-q)[(B\cap L(m\Z^2))\rtimes SL_2(\mathbb{Z})]~\mbox{for two integers $n$, $m$},\] where $q=\frac{u_{id}+u_{-id}}{2}$ and  $id$ denotes the identity matrix in $SL_2(\Z)$.
\end{theorem}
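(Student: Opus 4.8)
The plan is to mimic the proof of Theorem~\ref{thm: complete description of intermediate vn algs} (the $\Q$-coefficient non-factor case), carrying over the same bookkeeping but keeping track of the extra rigidity forced by working over $\Z$. As before, write $E$ for the trace-preserving conditional expectation onto $P$, set $c_v := u_v^*E(u_v)$ for $v\in\Z^2$, and record the three structural identities \eqref{eq: relation btw c_a and c_-a}, \eqref{eq: relation btw c_g.a and c_a}, \eqref{eq: key restriction for E(c_a) in proving maximality}, whose proofs only used that $L(SL_2(\Z))\subset P\subset B\rtimes SL_2(\Z)$. The central projection $q=\frac{u_{id}+u_{-id}}{2}$ again splits the problem: $(qL(SL_2(\Z))\subset q(B\rtimes SL_2(\Z)))\cong (L(PSL_2(\Z))\subset B\rtimes PSL_2(\Z))$ via the isomorphism $\pi$ built in the proof of Corollary~\ref{cor: max Haagerup for LG}, so I should first handle the $PSL_2(\Z)$-piece and then lift, exactly as in the $\Q$ case.

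\textbf{Key steps.} First I would prove the $PSL_2(\Z)$ analogue of Theorem~\ref{thm: no intermediate subalgs}: every intermediate von Neumann algebra between $L(PSL_2(\Z))$ and $B\rtimes PSL_2(\Z)$ is of the form $(B\cap L(n\Z^2))\rtimes PSL_2(\Z)$ for some integer $n\ge 0$ (with $n=0$ read as $L(PSL_2(\Z))$). The point is that over $\Z$ the affine action is \emph{not} $2$-transitive: the $SL_2(\Z)$-orbit of a primitive vector $e_1$ is the set of primitive vectors, and a general nonzero vector $v=d w$ with $w$ primitive lies in the orbit of $de_1$. So the relation \eqref{eq: compute E(u_b+u_-b) by E(u_e_1+u_-e_1)}, together with the analogue of Claim~1 (which localizes $E(u_{e_1}+u_{-e_1})$ inside $(B\cap L(\Z e_1))\rtimes \bar K$), forces $E(u_{e_1}+u_{-e_1})$ to be supported on a $\bar K$-invariant subset of $\Z e_1$; running the equations from Step~2 of Theorem~\ref{thm: no intermediate subalgs} (now with $\bar K$-translation-invariance of the support and the bimodularity relation \eqref{eq: E(E() )=E()E()}) pins the support of $E(u_{e_1}+u_{-e_1})$ to $\{\pm n\}e_1$ for a single $n$, and the idempotent relation $E(E(\cdot))=E(\cdot)$ forces the coefficient to be $0$ or $1$. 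Iterating this over all primitive directions via \eqref{eq: compute E(u_b+u_-b) by E(u_e_1+u_-e_1)} shows $P\cap B$ equals $B\cap L(n\Z^2)$ and $P$ is the crossed product. Then, back in the $SL_2(\Z)$ setting, I would (i) expand $c_{e_1}$ as in \eqref{eq: def of c_e_1} with $\lambda$- and $\mu$-coefficients, (ii) derive the symmetry relations \eqref{eq: lambda is symmetric along x=-1}, \eqref{eq: conjugation restriction on coefficients}, and the vanishing of $\lambda,\mu$ off the relevant diagonals from $u_g\in P$ and from \eqref{eq: relation btw c_a and c_-a}, (iii) use the just-proved $PSL_2(\Z)$ classification on $qP$ to get the cross-relation $\lambda_{x,y}+\mu_{x,-y}=0$ off a controlled set (analogue of \eqref{eq: relation between lambda and mu by symmetry}), and (iv) feed all of this into the system \eqref{eq: key restriction for E(c_a) in proving maximality} to solve for the coefficients and conclude $(1-q)P=(1-q)[(B\cap L(m\Z^2))\rtimes SL_2(\Z)]$ for some integer $m$. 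Combining the $q$- and $(1-q)$-parts gives the stated form. The detailed induction in step~(ii)--(iv) is exactly what the appendix is promised to contain, so here I would state the outcome and defer the computation.

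\textbf{Main obstacle.} The genuinely new difficulty, compared to the $\Q$ case, is the failure of $2$-transitivity: in the $\Q$ proof one exploited that $SL_2(\Q)$ acts transitively on $\Q^2\setminus\{0\}$ to reduce everything to a single orbit and force $c_v$ to have minimal support. Over $\Z$ one must instead run the argument simultaneously over the countably many scaling strata $d\Z^2_{\mathrm{prim}}$, and show that the support of $E(u_{e_1}+u_{-e_1})$ cannot ``leak'' between different primitive directions — this is where the $\bar K$-invariance of the support combined with the matrix-decomposition uniqueness observation (the commented-out Remark after \eqref{def: def of E(e_2+-e_2)} in the $\Q$ proof) does the work, but it has to be redone carefully because $\bar K$-orbits on $\Z e_1$ are now singletons rather than infinite. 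The other bookkeeping cost is that the single integer $n$ (resp.\ $m$) appearing on $q$ (resp.\ $1-q$) need not coincide, which is why the final answer is a genuine direct sum of two crossed products; verifying that \emph{every} such direct sum does occur (or at least is consistent with all the equations) is a straightforward but slightly tedious check that I would relegate to the appendix along with the induction step.
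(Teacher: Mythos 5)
Your overall architecture is the one the paper follows: prove the $PSL_2(\Z)$ statement first, split along the central projection $q$, apply it to $qP$ via the isomorphism $\pi$, and then handle $(1-q)P$ by the coefficient analysis of $c_{e_1}$ with the induction relegated to an appendix. The gap is inside the $PSL_2(\Z)$ step (Theorem \ref{thm: PSL_2(Z)-thm}), which is the genuinely new part of the $\Z$ case. ``Iterating over all primitive directions'' via (\ref{eq: compute E(u_b+u_-b) by E(u_e_1+u_-e_1)}) only determines $E$ on the $SL_2(\Z)$-orbit of $e_1$, i.e.\ on primitive vectors; moreover the support argument pins $E(u_{e_1}+u_{-e_1})$ to $\lambda(u_{e_1}+u_{-e_1})$ with $\lambda\in\{0,1\}$ --- it cannot land on a deeper stratum $\{\pm n\}e_1$ as you write, since $u_{ne_1}$ is orthogonal to $u_{e_1}$ --- and it says nothing about $E(u_{ke_1}+u_{-ke_1})$ for $k\ge 2$. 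Those must be computed separately, and the bimodularity identity at level $k$ produces terms $E\big[\big(\begin{smallmatrix}x+ky\\ k\end{smallmatrix}\big)+\cdots\big]$ whose vectors have content $\gcd(k,|x|)<k$ whenever $k\nmid x$: the scaling strata genuinely couple. One therefore needs an induction on $k$, together with a device for the case in which some lower stratum already lies in $P$ (so that the lower-level expectations are nonzero and the coefficient system becomes inhomogeneous). The paper resolves that case not by solving the equations but by invoking Packer's theorem \cite{packer} (see also \cite{suzuki}) --- freeness of both actions forces any intermediate algebra above $(B\cap L(i\Z^2))\rtimes PSL_2(\Z)$ to be the crossed product of an intermediate quotient action --- and then the classification of quotients of $SL_2(\Z)\curvearrowright\T^2$ from \cite{js}*{Lemma 3.5} to identify that quotient as $\widehat{m\Z^2}/{\sim}$ with $m\mid i$. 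Neither ingredient appears in your outline, and without one of them the coefficient equations do not by themselves show that $P\cap B$ has the form $B\cap L(n\Z^2)$ (rather than some other $SL_2(\Z)$-invariant subalgebra), nor do they close the induction.

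Two smaller remarks. First, the commented-out uniqueness-of-decomposition observation you lean on justifies comparing coefficients in the bimodularity identity; it does not by itself prevent ``leakage'' between strata --- that is exactly what the divisibility bookkeeping ($a\mid x$, $b\mid 2$, $(a+\ell)\mid s$, etc.) in the paper's equations (\ref{eq: Z-case, 1st restriction}) and (\ref{system of equtions: Z-case and k=ell, eq3-eq4}) is doing. Second, no verification that ``every such direct sum occurs'' is needed for the stated theorem; the content is only that $P$ must have this form, and the integers $n$ and $m$ on the two corners are independent for the trivial reason that $q$ is central and lies in $P$.
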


Similar to the $\mathbb{Q}$-coefficient case, we need to first deal with the $PSL_2(\mathbb{Z})$-action.

\begin{theorem}\label{thm: PSL_2(Z)-thm}
If $P$ is a von Neumann algebra between $L(PSL_2(\Z))$ and $B\rtimes PSL_2(\Z)$, then $P=(B\cap L(n\Z^2))\rtimes PSL_2(\Z)$ for an integer $n$. 
\end{theorem}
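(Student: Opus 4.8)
The plan is to imitate the proof of Theorem~\ref{thm: no intermediate subalgs}, the one genuinely new phenomenon being that $SL_2(\Z)$ does not act transitively on $\Z^2\setminus\{(0,0)\}$: the orbits are indexed by the content $\gcd(v)$ of a vector, with $de_1$ a representative of the orbit of content~$d$ (here $e_1=\big(\begin{smallmatrix}1\\0\end{smallmatrix}\big)$, $e_2=\big(\begin{smallmatrix}0\\1\end{smallmatrix}\big)$). Consequently, instead of the single quantity $E(u_{e_1}+u_{-e_1})$ one must keep track of the whole family $\{E(u_{de_1}+u_{-de_1})\}_{d\geq 1}$ at once, and the key relations will couple different values of $d$. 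Concretely: write $N:=L(PSL_2(\Z))$, $M:=B\rtimes PSL_2(\Z)$, and let $E:M\twoheadrightarrow P$ be the trace-preserving conditional expectation. Since $u_g\in P$ for $g\in PSL_2(\Z)$, one has $u_gE(u_v+u_{-v})u_g^*=E(u_{g.v}+u_{-g.v})$ for all $v\in\Z^2$, so $E(u_v+u_{-v})$ is determined by $E(u_{de_1}+u_{-de_1})$ with $d=\gcd(v)$. Running the argument of Claim~1 in the proof of Theorem~\ref{thm: no intermediate subalgs} over $\Z$, each $E(u_{de_1}+u_{-de_1})$ lies in $(B\cap L(\Z e_1))\rtimes\bar K$, where $\bar K$ is the image in $PSL_2(\Z)$ of $\big(\begin{smallmatrix}1&\Z\\0&1\end{smallmatrix}\big)$; I would then write
\[
E(u_{de_1}+u_{-de_1})=\tfrac12\sum_{x,y\in\Z}\lambda^{(d)}_{x,y}\big[u_{xe_1}+u_{-xe_1}\big]u_{h_y},\qquad h_y:=\big(\begin{smallmatrix}1&y\\0&1\end{smallmatrix}\big),\quad\lambda^{(d)}_{x,y}=\lambda^{(d)}_{-x,y},
\]
and test against $u_{h_b}\in P$ to get $\lambda^{(d)}_{0,b}=0$ for all $b\in\Z$ and all $d\geq 1$.

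\textbf{The base level $d=1$.} I would apply $E\big(E(u_{e_1}+u_{-e_1})(u_{e_2}+u_{-e_2})\big)=E(u_{e_1}+u_{-e_1})E(u_{e_2}+u_{-e_2})$ and compute both sides as in Steps~2--3 of the proof of Theorem~\ref{thm: no intermediate subalgs}. The point is that every group element occurring in this particular product sits over a \emph{primitive} vector, so the relation stays inside the unknowns $\lambda^{(1)}$; the $\Z$-analogue of \eqref{eq: eq1-6} is obtained with the sole change that the ``change of variables $x\mapsto x/a$'' is replaced by the observation that matching two elements of $\Z^2\rtimes SL_2(\Z)$, each of the form $(\text{primitive vector})\cdot(\text{unipotent})$, forces an \emph{exact} divisibility. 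The same elimination then yields $\lambda^{(1)}_{x,y}=0$ unless $(x,y)=(\pm1,0)$, and, applying $E$ to the resulting identity $E(u_{e_1}+u_{-e_1})=\lambda^{(1)}_{1,0}(u_{e_1}+u_{-e_1})$, that $\lambda^{(1)}_{1,0}\in\{0,1\}$; so $E(u_{e_1}+u_{-e_1})\in\{0,\,u_{e_1}+u_{-e_1}\}$.

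\textbf{Dichotomy, induction, and identification of $P$.} If $E(u_{e_1}+u_{-e_1})=u_{e_1}+u_{-e_1}$, then $u_v+u_{-v}\in P$ for every primitive $v$, and expanding $(u_{e_1}+u_{-e_1})^d$ shows inductively that $u_{de_1}+u_{-de_1}\in P$ for all $d$; hence every $u_v+u_{-v}$ lies in $P$, these span a $\sigma$-weakly dense subspace of $B$, and $P=B\rtimes PSL_2(\Z)=(B\cap L(\Z^2))\rtimes PSL_2(\Z)$. Otherwise $E(u_{e_1}+u_{-e_1})=0$, hence $E(u_v+u_{-v})=0$ for all primitive $v$, and I would show $E(u_{de_1}+u_{-de_1})\in\{0,\,u_{de_1}+u_{-de_1}\}$ for every $d\geq 2$ by induction on $d$: feed the relations $E\big(E(u_{de_1}+u_{-de_1})(u_w+u_{-w})\big)=E(u_{de_1}+u_{-de_1})E(u_w+u_{-w})$ for suitable $w$ (e.g.\ $w=e_1$), expand the left side, and substitute the already known values $E(u_{d'e_1}+u_{-d'e_1})\in\{0,\,u_{d'e_1}+u_{-d'e_1}\}$ for the lower contents $d'$ that appear, so that the relations remaining in $\lambda^{(d)}$ alone pin it down. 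Finally I would set $S:=\{d\geq 1:E(u_{de_1}+u_{-de_1})=u_{de_1}+u_{-de_1}\}$; for $d,d'\in S$ the element $(u_{de_1}+u_{-de_1})(u_{d'e_1}+u_{-d'e_1})$ lies in $P$ and is therefore fixed by $E$, and comparing the coefficients of $u_{(d+d')e_1}+u_{-(d+d')e_1}$ and (when $d\neq d'$) of $u_{|d-d'|e_1}+u_{-|d-d'|e_1}$ shows that $S$ is closed under addition and under $(d,d')\mapsto|d-d'|$; hence $S=\emptyset$ or $S=n\Z_{\geq 1}$ with $n=\min S$. Since $v\in n\Z^2\iff\gcd(v)\in S\iff E(u_v+u_{-v})=u_v+u_{-v}$, the map $E|_B$ is the trace-preserving expectation of $B$ onto $B\cap L(n\Z^2)$ (with $L(0\cdot\Z^2)=\C$ in the empty case), and since $E(bu_g)=E(b)u_g$ for $b\in B$, $g\in PSL_2(\Z)$ (because $N\subseteq P$), this forces $P=(B\cap L(n\Z^2))\rtimes PSL_2(\Z)$.

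\textbf{Where the difficulty lies.} I expect the delicate point to be the inductive claim in the case $E(u_{e_1}+u_{-e_1})=0$. Unlike over $\Q$, where every vector is primitive and the calculation closes on itself, the relation at content $d$ also involves the unknowns at the \emph{higher} contents $|x\pm1|$ produced when the product $E(u_{de_1}+u_{-de_1})(u_{e_1}+u_{-e_1})$ is expanded, so one first has to shrink the support of $\lambda^{(d)}$ enough that only contents $\leq d$ survive before the induction hypothesis can be invoked. Organizing this bookkeeping is precisely the induction step whose details are carried out in the appendix.
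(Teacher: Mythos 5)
Your overall architecture---reduce everything to the family $E(u_{de_1}+u_{-de_1})$, show each is $0$ or $u_{de_1}+u_{-de_1}$, then determine the set $S$ of surviving contents combinatorially---is sound at the base level $d=1$ and in the endgame: the closure of $S$ under addition and absolute difference (hence $S=n\Z_{\geq 1}$ by the Euclidean algorithm) together with $P$-bimodularity of $E$ does identify $P$ correctly. But the central inductive step, which you yourself flag as ``the delicate point,'' is a genuine gap as written, and your pointer to the appendix does not fill it: the appendix carries out the induction for Theorem \ref{thm: SL_2(Z)-thm} (the $SL_2(\Z)$ case), and its computation uses the conclusion of the present theorem as input (via $qP=q[(B\cap L(n\Z^2))\rtimes SL_2(\Z)]$, which is where the relations between the $\lambda$- and $\mu$-coefficients come from), so citing it here is circular. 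The paper handles the case where some lower content survives by an entirely different mechanism: once some $u_{ie_1}+u_{-ie_1}\in P$, one has $(B\cap L(i\Z^2))\rtimes PSL_2(\Z)\subset P\subset B\rtimes PSL_2(\Z)$ with both actions free, so Packer's theorem (or Suzuki's) forces $P=L^{\infty}(Z)\rtimes PSL_2(\Z)$ for an intermediate quotient $Z$, and the classification of all factors of $SL_2(\Z)\curvearrowright \mathbb{T}^2$ (\cite[Lemma 3.5]{js}) pins $Z$ down to $\widehat{m\Z^2}/{\sim}$. Your proposal invokes neither input, so you are obliged to make the coefficient computation for content $d$ close even when lower contents are alive---and that is exactly where the ``otherwise $=0$'' case of the trichotomy fails for $b=0$, since a live lower content contributes nonzero terms to the very Fourier coefficients you want to kill.

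The gap is fillable without Packer, but only via an observation you do not make: testing against $u_{de_2}+u_{-de_2}$ (not against $u_{e_1}+u_{-e_1}$, whose expansion produces arbitrary contents $|x\pm 1|$), the only lower contents entering the identity for content $d$ are $\gcd(d,|x|)$, i.e.\ \emph{divisors} of $d$. Hence either every proper divisor of $d$ is dead---in which case the paper's clean computation (the $I_2=\emptyset$ case of its Claim 1) applies verbatim and yields $E(u_{de_1}+u_{-de_1})=\lambda_d(u_{de_1}+u_{-de_1})$ with $\lambda_d\in\{0,1\}$ by idempotency of $E$---or some proper divisor $d'\mid d$ is alive, in which case your own power trick applied to $(u_{d'e_1}+u_{-d'e_1})^{d/d'}$ already places $u_{de_1}+u_{-de_1}$ in $P$ and there is nothing to compute. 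With this dichotomy the induction closes and your semigroup argument takes over, giving a proof that is arguably more self-contained than the paper's. As submitted, however, the decisive step is asserted rather than proved, and the one reference offered in its support cannot be used.
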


As an application of Theorem \ref{thm: SL_2(Z)-thm}, we get the following corollary.
\begin{corollary}\label{cor: L(SL_2(Z)) is mHAP}
$L(SL_2(\Z))$ is a maximal Haagerup von Neumann subalgebra in $L(\Z^2\rtimes SL_2(\Z))$.
\end{corollary}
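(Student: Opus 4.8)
The plan is to deduce Corollary~\ref{cor: L(SL_2(Z)) is mHAP} from Theorem~\ref{thm: SL_2(Z)-thm} in exactly the same way that Corollary~\ref{cor: L(SL_2(Q)) is mHAP} was deduced from Theorem~\ref{thm: complete description of intermediate vn algs}. So let $P$ be an intermediate von Neumann subalgebra with the Haagerup property between $LG$ and $L(\Z^2\rtimes G)$, where now $G=SL_2(\Z)$. Set $\phi=Ad(u_g)$ with $g=-\mathrm{id}$; then $\phi^2=\mathrm{id}$, $\mathrm{Fix}(\phi)=B\rtimes G$ (since $B$ is exactly the part of $L(\Z^2)$ fixed by $v\mapsto -v$), and $\phi(P)=P$ because $u_g\in LG\subset P$. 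Put $P_0:=P\cap\mathrm{Fix}(\phi)=P^{\{\mathrm{id},\phi\}}\subset B\rtimes G$. Since $P$ has the Haagerup property, so does $P_0$, and hence so do $qP_0$ and $(1-q)P_0$ by \cite[Theorem~2.3]{jolissaint}.

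The first key step is to identify $P_0$ using Theorem~\ref{thm: SL_2(Z)-thm}: it tells us $P_0=q[(B\cap L(n\Z^2))\rtimes SL_2(\Z)]\oplus(1-q)[(B\cap L(m\Z^2))\rtimes SL_2(\Z)]$ for some integers $n,m$, and I must rule out everything except $n=m=0$, i.e.\ $P_0=LG$. The point is that $(B\cap L(n\Z^2))\rtimes SL_2(\Z)$ contains the diffuse abelian subalgebra $B\cap L(n\Z^2)$ (when $n\neq 0$, $n\Z^2\cong\Z^2$ and the action $SL_2(\Z)\curvearrowright \widehat{n\Z^2}\cong\T^2$ is the standard one), and the inclusion $B\cap L(n\Z^2)\subset (B\cap L(n\Z^2))\rtimes SL_2(\Z)$ has relative property (T) by \cite[Lemma~3.5]{js} (this is precisely the ingredient already invoked in the proof of Corollary~\ref{cor: max Haagerup for LG}). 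Cutting by the central projections $q$ and $1-q$ preserves relative property (T) by \cite[Proposition~4.7]{popa_annals}, so if $n\neq 0$ the corner $q(B\cap L(n\Z^2))\rtimes SL_2(\Z)$ — hence $qP_0$ — would fail the Haagerup property, a contradiction; similarly for $m$ and $1-q$. Therefore $n=m=0$ and $P_0=LG$.

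The second key step is to pass from $P_0=LG$ to $P=LG$, following the template in the proof of Corollary~\ref{cor: L(SL_2(Q)) is mHAP} (which in turn mimics \cite[Corollary~3.14]{chifan_das}). First, $P'\cap P\subset (LG)'\cap L(\Z^2\rtimes G)=\C+\C u_{-\mathrm{id}}=q\C\oplus(1-q)\C$, so the center of $P$ is at most two-dimensional; since $P_0=P^{\{\mathrm{id},\phi\}}$ with $\phi$ of finite spectrum, the corrected version of \cite[Theorem~3.2]{jones_xu} (applicable because $P$ is a finite direct sum of finite factors) gives $[P:P_0]<\infty$. Hence $P\subset\mathcal{QN}_{L(\Z^2\rtimes G)}(LG)''$, and by Proposition~6.10 in the arXiv version of \cite{ioana_duke} this quasi-normalizer equals $L^\infty(X_c)\rtimes G$ where $G\curvearrowright X_c$ is the maximal compact factor of $G\curvearrowright\widehat{\Z^2}\cong\T^2$. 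I then need: the standard action $SL_2(\Z)\curvearrowright\T^2$ is weakly mixing, so $X_c$ is a point and $P=LG$. (Weak mixing of $SL_2(\Z)\curvearrowright\T^2$ is classical — e.g.\ via \cite[Proposition~2.36]{kerrli_book}, since every nonzero element of $\Z^2$ has infinite $SL_2(\Z)$-orbit — the same argument used for $\Q$ in Corollary~\ref{cor: weakly mixing prime action}.) Finally, since $SL_2(\Z)$ has the Haagerup property \cite{ghw} and $\Z^2\rtimes SL_2(\Z)$ does not (as $\Z^2\subset\Z^2\rtimes SL_2(\Z)$ has relative property (T) \cite{margulis}), $LG$ is a proper subalgebra of the non-Haagerup algebra $L(\Z^2\rtimes SL_2(\Z))$, and the above shows it is maximal among Haagerup intermediate subalgebras; that is, $L(SL_2(\Z))$ is maximal Haagerup in $L(\Z^2\rtimes SL_2(\Z))$.

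The main obstacle is really Theorem~\ref{thm: SL_2(Z)-thm} itself, which is assumed here and whose proof (carried out earlier in the $\Z$-section, with the induction step deferred to the appendix) is substantially more delicate than the $\Q$-case because $SL_2(\Z)\curvearrowright\Z^2\setminus\{0\}$ is not transitive; once that description is in hand, the deduction of the corollary is routine and parallels the $\Q$-coefficient argument verbatim, the only new bookkeeping being the identification of the integers $n,m$ with $0$ via the relative property (T) obstruction applied separately on the two corners $q$ and $1-q$.
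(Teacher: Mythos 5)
Your proposal is correct and follows exactly the route the paper intends: it uses relative property (T) of $B\cap L(m\Z^2)\subset (B\cap L(m\Z^2))\rtimes SL_2(\Z)$ for $m\neq 0$ (via \cite[Lemma 3.5]{js} and \cite[Proposition 4.7]{popa_annals}) to force $n=m=0$ in Theorem \ref{thm: SL_2(Z)-thm}, and then repeats the Jones--Xu finite-index, quasi-normalizer, and weak-mixing argument from Corollary \ref{cor: L(SL_2(Q)) is mHAP}. The paper's own proof is just this sketch with the second half left as an exercise, which you have carried out correctly.
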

\begin{proof}
Note that if $m\neq 0$, then $B\cap L(m\Z^2)\subset (B\cap L(m\Z^2))\rtimes SL_2(\mathbb{Z})$ has relative (T) by \cite[Lemma 3.5]{js}; equivalently, both $q[B\cap L(m\Z^2)]\subset q[(B\cap L(m\Z^2))\rtimes SL_2(\mathbb{Z})]$ and $(1-q)[B\cap L(m\Z^2)]\subset (1-q)[(B\cap L(m\Z^2))\rtimes SL_2(\mathbb{Z})]$ have relative (T) by \cite[Proposition 4.7]{popa_annals}.
The rest proof is almost identical to the proof of Corollary \ref{cor: L(SL_2(Q)) is mHAP}. 
We leave it as an exercise.
\end{proof}
\paragraph*{\textbf{On the difference between $\Z$ and $\Q$-coefficient}}
The proof of Theorem \ref{thm: PSL_2(Z)-thm} (resp. Theorem \ref{thm: SL_2(Z)-thm}) follows the proof of Theorem \ref{thm: no intermediate subalgs} (resp. Theorem \ref{thm: complete description of intermediate vn algs}) closely. In fact, most parts of the proof for $\Q$-coefficient case still work verbatim for the $\Z$-coefficient case. The key difference lies in the fact that the affine action ${\mathbf{k}}^2\rtimes SL_2(\mathbf{k})\curvearrowright \mathbf{k}^2$ is 2-transitive for $\mathbf{k}=\Q$ but not for $\mathbf{k}=\Z$. Due to the failure of 2-transitivity for $\mathbf{k}=\Z$ case, $E(u_{ne_1}+u_{-ne_1})$ (resp. $E(u_{ne_1})$) is not determined directly by $E(u_{e_1}+u_{-e_1})$ (resp. $E(u_{e_1})$) for all $n\geq 2$, where $\pm ne_1=\big(\begin{smallmatrix}
\pm n\\0
\end{smallmatrix}\big)$ and $E$ denotes the trace preserving conditional expectation onto the mysterious intermediate von Neumann subalgebra of $B\rtimes PSL_2(\Z)$ (resp. $B\rtimes SL_2(\Z)$). Instead, we do the computation, which was used for determining $E(u_{e_1}+u_{-e_1})$ (resp. $E(u_{e_1})$), for each $n$ inductively. Then, this computation is combined with Packer's result \cite{packer} (see also \cite{suzuki}) and the known result on complete description of all factors of the action $SL_2(\Z)\curvearrowright \widehat{\Z^2}\cong \mathbb{T}^2$ \cite[Lemma 3.5]{js} (see also \cite[Example 5.9]{witte}, \cite[Theorem 2.3]{park}) to finish the proof.

\begin{proof}[Proof of Theorem \ref{thm: PSL_2(Z)-thm}]
We first make the following claim:

\textbf{Claim 1:} For each $n\geq 1$, $E(u_{ne_1}+u_{-ne_1})=\lambda_n(u_{ne_1}+u_{-ne_1})$ for some scalar $\lambda_n$.
\begin{proof}[Proof of Claim 1]
To prove this claim, we first observe that for $n=1$, the proof is almost identical to the proof of Theorem \ref{thm: no intermediate subalgs}.

Indeed, the main argument in Step 1 there still works verbatim if we replace $\Q$ by $\Z$, so one can still write $E(u_{e_1}+u_{-e_1})=\sum_{x, y\in\Q}\lambda_{x, y}\big(\begin{smallmatrix}
x\\0
\end{smallmatrix}\big)\big(\begin{smallmatrix}
1&y\\0&1
\end{smallmatrix}\big)$, where $\lambda_{x, y}=\lambda_{-x, y}$ for all $(x, y)\in\Q^2$ and we may assume $\lambda_{x,y}=0$ if $x$ or $y\in\Q\setminus\Z$. Then, one can check the proofs in Step 2 and Step 3 still work to show that $\lambda_{x, y}=0$ unless $(x, y)=(\pm 1, 0)$ and hence we deduce $E(u_{e_1}+u_{e_{-1}})=\lambda(u_{e_1}+u_{-e_1})$ for some scalar $\lambda$ (and in fact $\lambda=0$ or $1$).

Next, assume the claim holds for all $n<k$, and let us check the claim for $n=k$.

Denote by 
\begin{align*}
I_1&:=\{1\leq i<k: E(u_{ie_1}+u_{-ie_1})=0\},\\
I_2&:=\{1\leq i<k: E(u_{ie_1}+u_{-ie_1})=\lambda_i(u_{ie_1}+u_{-ie_1})~\mbox{for some}~\lambda_i\neq 0\}.
\end{align*}
Without loss of generality, we may assume that $I_2=\emptyset$.
Indeed, assume not, then there exists some $i<k$ such that $u_{ie_1}+u_{-ie_1}=E(u_{ie_1}+u_{-ie_1})/\lambda_i\in P$.
Clearly, this implies that $(B\cap L(i\Z^2))\rtimes PSL_2(\Z)\subset P\subset B\rtimes PSL_2(\Z)\cong L^{\infty}(\widehat{\Z^2}/{\sim})\rtimes PSL_2(\Z)$. As $(B\cap L(i\Z^2))\rtimes PSL_2(\Z)\cong L^{\infty}(\widehat{i\Z^2}/{\sim})\rtimes PSL_2(\Z)$, where $PSL_2(\Z)\curvearrowright\widehat{i\Z^2}/{\sim}$ denotes the factor of $PSL_2(\Z)\curvearrowright \widehat{\Z^2}/{\sim}$ induced by the $SL_2(\Z)$-module inclusion $i\Z^2\hookrightarrow \Z^2$.
Since both $PSL_2(\Z)\curvearrowright \widehat{\Z^2}/{\sim}$ and $PSL_2(\Z)\curvearrowright \widehat{i\Z^2}/{\sim}$ are free actions, we deduce that $P=L^{\infty}(Z)\rtimes PSL_2(\Z)$ for some intermediate factor $PSL_2(\Z)\curvearrowright \widehat{\Z^2}/{\sim}\twoheadrightarrow Z\twoheadrightarrow \widehat{i\Z^2}/{\sim}$ by \cite{packer, suzuki}. We may replace the acting group by $SL_2(\Z)$ and apply \cite[Lemma 3.5]{js} to deduce that $(PSL_2(\Z)\curvearrowright \widehat{\Z^2}/{\sim}\twoheadrightarrow Z\twoheadrightarrow \widehat{i\Z^2}/{\sim})\cong (PSL_2(\Z)\curvearrowright \widehat{\Z^2}/{\sim}\twoheadrightarrow \widehat{m\Z^2}/{\sim}\twoheadrightarrow \widehat{i\Z^2}/{\sim})$ for some integer $m$ with $m\mid i$. In other words, $P=(B\cap L(m\Z^2))\rtimes PSL_2(\Z)$. Clearly, this implies that $E(u_{ke_1}+u_{-ke_1})=\lambda_k(u_{ke_1}+u_{-ke_1})$ for some scalar $\lambda_k$ and the induction step is finished.

From now on, we assume $I_2=\emptyset$ and prove Claim 1 for $n=k$.

Clearly, Step 1 in the proof of Theorem \ref{thm: no intermediate subalgs} still works to show that $E(u_{ke_1}+u_{-ke_1})=\sum_{x, y}\lambda_{x, y}\big(\begin{smallmatrix} 
x\\0
\end{smallmatrix}\big)\big(\begin{smallmatrix}
1&y\\0&1\\
\end{smallmatrix}\big)$ for some scalars $\lambda_{x, y}$ satisfying $\lambda_{x, y}=\lambda_{-x, y}$ for all $x, y$.

Now, we repeat the calculation in Step 2 in the proof of Theorem \ref{thm: no intermediate subalgs} but for $E(u_{ke_1}+u_{-ke_1})$. We sketch the calculation below with focus on the modification needed.

Note that $\lambda_{0, b}=0$ for all $b\in \mathbb{Z}$ by a similar argument to deduce (\ref{eq: 2nd restriction}).

We will compute both sides of the identity 
\begin{align}\label{eq: Z-case: E(E() )=E()E()}
E(E(u_{ke_1}+u_{-ke_1})(u_{ke_2}+u_{-ke_2}))=E(u_{ke_1}+u_{-ke_1})E(u_{ke_2}+u_{-ke_2}),
\end{align}
where $\pm k e_2=\big(\begin{smallmatrix}
0\\ \pm k
\end{smallmatrix}\big)$.

First, we still have 
$E(u_{ke_2}+u_{-ke_2})=\frac{1}{2}\sum_{x, y}\lambda_{x, y}\big[\big(\begin{smallmatrix}
0\\x
\end{smallmatrix}\big)+\big(\begin{smallmatrix}
0\\-x
\end{smallmatrix}\big)\big]\big(\begin{smallmatrix}
1&0\\-y&1
\end{smallmatrix}\big).$

Now, a calculation shows that 
\begin{align*}
\mbox{RHS of (\ref{eq: Z-case: E(E() )=E()E()})}=\sum_{x, y, b}\sum_{a>0}\lambda_{x, y}\lambda_{a, b}\big[\big( \begin{smallmatrix}
x+ay\\a
\end{smallmatrix}\big)+\big(\begin{smallmatrix}
-x-ay\\-a
\end{smallmatrix}\big) \big]\big(\begin{smallmatrix}
1-by&y\\
-b&1
\end{smallmatrix}\big).
\end{align*}
Note that we have used the fact $\lambda_{0, b}=0$ to cross out the terms corresponding to $a=0$.

Meanwhile, using the expression for $E(u_{ke_1}+u_{-ke_1})$, we can check the LHS of (\ref{eq: Z-case: E(E() )=E()E()}) equals 
\begin{align*}
   \frac{1}{2}\sum_{x, y}\lambda_{x, y}E\big[\big( \begin{smallmatrix}x+ky\\k \end{smallmatrix}\big) +\big(\begin{smallmatrix}-x-ky\\-k \end{smallmatrix}\big)  \big]\big(\begin{smallmatrix} 1&y\\0&1\end{smallmatrix}\big)
   +\frac{1}{2}\sum_{x, y}\lambda_{x, y}E\big[ \big(\begin{smallmatrix}-x+ky\\k \end{smallmatrix}\big)+\big(\begin{smallmatrix}x-ky\\-k \end{smallmatrix}\big) \big]\big(\begin{smallmatrix} 1&y\\0&1\end{smallmatrix}\big).
\end{align*}

To continue the calculation, we observe that the above two summands are equal by doing change of variables and applying the fact $\lambda_{x, y}=\lambda_{-x, y}$. Therefore,  
\begin{align*}
&\quad\mbox{LHS of (\ref{eq: Z-case: E(E() )=E()E()})}\\
&=\sum_{x, y}\lambda_{x, y}E\big[\big( \begin{smallmatrix}x+ky\\k \end{smallmatrix}\big) +\big(\begin{smallmatrix}-x-ky\\-k \end{smallmatrix}\big)  \big]\big(\begin{smallmatrix} 1&y\\0&1\end{smallmatrix}\big)\\
&=\sum_{k\mid x}\sum_y\lambda_{x, y}\big(\begin{smallmatrix}
\frac{x}{k}+y&\frac{x}{k}+y-1\\
1&1
\end{smallmatrix}\big)E(u_{ke_1}+u_{-ke_1})\big(\begin{smallmatrix}
\frac{x}{k}+y&\frac{x}{k}+y-1\\
1&1
\end{smallmatrix}\big)^{-1}\big(\begin{smallmatrix}
1&y\\0&1
\end{smallmatrix}\big)\\
&+\sum_{k\nmid x}\sum_y\big(\begin{smallmatrix}
\frac{x+ky}{d_x}&z_x\\
\frac{k}{d_x}&w_x
\end{smallmatrix}\big)E(u_{d_xe_1}+u_{-d_xe_1})\big(\begin{smallmatrix}
\frac{x+ky}{d_x}&z_x\\
\frac{k}{d_x}&w_x
\end{smallmatrix}\big)^{-1}\big(\begin{smallmatrix}
1&y\\0&1
\end{smallmatrix}\big)\\
&\quad(\mbox{in the 2nd summand, $d_x:=\gcd(k, |x|)$ and $(\frac{x+ky}{d_x})w_x-\frac{kz_x}{d_x}=1$ for some integers $w_x, z_x$.})\\
&=\sum_{k\mid x}\sum_{y,a,b}\lambda_{x, y}\lambda_{a, b}\big(\begin{smallmatrix}
\frac{x}{k}+y&\frac{x}{k}+y-1\\
1&1
\end{smallmatrix}\big)\big(\begin{smallmatrix}
a\\0
\end{smallmatrix}\big)\big(\begin{smallmatrix}
1&b\\0&1
\end{smallmatrix}\big)\big(
\begin{smallmatrix}
\frac{x}{k}+y&\frac{x}{k}+y-1\\
1&1
\end{smallmatrix}\big)^{-1}\big(\begin{smallmatrix}
1&y\\0&1
\end{smallmatrix}\big)\\
&\quad (\mbox{as $I_2=\emptyset$ and $I_1$ has no contribution to the sum.})\\
&=\sum_{k\mid x}\sum_{y, b}\sum_{a>0}\lambda_{x, y}\lambda_{a, b}\big[\big(\begin{smallmatrix}
a(\frac{x}{k}+y)\\
a
\end{smallmatrix}\big)+\big(\begin{smallmatrix}
-a(\frac{x}{k}+y)\\
-a
\end{smallmatrix}\big)\big]\big(\begin{smallmatrix}
1-b(\frac{x}{k}+y)& y+b(\frac{x^2}{k^2}+\frac{xy}{k})\\
-b&1+\frac{bx}{k}
\end{smallmatrix}\big).
\end{align*}
To get the last equality, we have used $\lambda_{0, b}=0$ for all $b\in \mathbb{Z}$ to cross out the terms corresponding to $a=0$.

Next, for each $(x, y, b)\in \Z^3$ and $0<a\in \Z$, by comparing the coefficients of (\ref{eq: Z-case: E(E() )=E()E()}) in front of the term 
$\big[\big(\begin{smallmatrix}
x+ay\\a
\end{smallmatrix}\big)+\big(\begin{smallmatrix}
-x-ay\\-a
\end{smallmatrix}\big)\big]\big(\begin{smallmatrix}
1-by&y\\-b&1
\end{smallmatrix}\big)$,  
we deduce that 
\begin{align}\label{eq: Z-case, 1st restriction}
\lambda_{x, y}\lambda_{a, b}=\begin{cases}
\lambda_{\frac{kx}{a}, y}\lambda_{a, b}, ~&\mbox{if}~bx=0~\mbox{and}~a\mid x\\
\lambda_{\frac{2k}{b}, y-\frac{4}{b}}\lambda_{a, -b}, ~&\mbox{if}~b\neq 0, b\mid 2,~\mbox{and}~2a+bx=0\\
0,~&\mbox{otherwise}.
\end{cases}
\end{align}

We are left to argue that $\lambda_{x, y}=0$ unless $(x,y)=(\pm k, 0)$.

First, observe that $\lambda_{x, y}\lambda_{a, b}=0$ if $x\neq 0$ and $|b|\geq 3$ by the last case in (\ref{eq: Z-case, 1st restriction}). We may take $y=b$, $a=x$ to deduce $\lambda_{x, b}=0$ for all $b$ and all $x\neq 0$ with $|b|\geq 3$ as $\lambda_{x, b}=\lambda_{-x, b}$. 

Next, we take $y=b\in \{\pm 2, \pm 1\}$ and $a=x>0$. Then (\ref{eq: Z-case, 1st restriction}) is simplified to 
\begin{align*}%\label{eq: Z-case, 1st restriction simplified version}
\lambda_{x, 2}^2=0,~
\lambda_{x, -2}^2=\lambda_{-k, 0}\lambda_{x, 2}=\lambda_{-k, 0}0=0,
\lambda_{x, 1}^2=0,~
\lambda_{x, -1}^2=0.
\end{align*}
Therefore, $\lambda_{x, b}=0$ for all $x\neq 0$ and $|b|=1$ or $2$ .
To sum up, we have shown that $\lambda_{x, b}=0$ for all integers $b$ and $x$ with $bx\neq 0$. Recall that $\lambda_{0, b}=0$ for all $b\neq 0$ by assumption, we are left to determine $\lambda_{x,0}$. 

By plugging $b=0$ and $y=0$ into (\ref{eq: Z-case, 1st restriction}), we get 
$\lambda_{x, 0}\lambda_{a, 0}=\begin{cases}
\lambda_{\frac{kx}{a},0}\lambda_{a, 0},~&\mbox{if}~a\mid x\\
0, ~&\mbox{otherwise}.
\end{cases}$

Assume there exists some integer $n$ with $n\geq 2$ and $\lambda_{kn, 0}\neq 0$, then plug $a=kn$ into the above expression to deduce that 
$\lambda_{x, 0}=\begin{cases}
\lambda_{\frac{x}{n}, 0}, ~&\mbox{if}~(kn)\mid x\\
0,~&\mbox{otherwise}.
\end{cases}$

Next, plug $x=kn$ and $k$ respectively in the last expression, we get 
$0\neq \lambda_{kn, 0}=\lambda_{k, 0}=0$ as $n\geq 2$.
This gives a contradiction, and hence $\lambda_{kn,0}=0$ for all $|n|\geq 2$ as $\lambda_{x, y}=\lambda_{-x, y}$ for all $x$ and $y$.

Finally, we have shown that $\lambda_{x, y}=0$ unless $(x, y)=(\pm k, 0)$, which tells us that $E(u_{ke_1}+u_{-ke_1})=\lambda_{k,0}(u_{ke_1}+u_{-ke_1})$. This finishes the proof of Claim 1.
\end{proof}
Now, we consider the following index sets:
\begin{align*}
I_1'&:=\{1\leq i: E(u_{ie_1}+u_{-ie_1})=0\},\\
I_2'&:=\{1\leq i: E(u_{ie_1}+u_{-ie_1})=\lambda_i(u_{ie_1}+u_{-ie_1})~\mbox{for some}~\lambda_i\neq 0\}.
\end{align*}
Claim 1 implies that $\mathbb{N}^{+}=I_1'\sqcup I_2'$.
If $I_2'\neq \emptyset$, then we argue as in the proof of Claim 1 (while assuming $I_2\neq \emptyset$ there) to deduce that $P$ is of the required form. If $I_2'=\emptyset$, then $E(u_{ie_1}+u_{-ie_1})=0$ for all $i\geq 1$, which implies $E(u_{g.(ie_1)}+u_{g.(-ie_1)})=u_gE(u_{ie_1}+u_{-ie_1})u_g^*=0$ for all $g\in SL_2(\Z)$. As $\{g.(ie_1): g\in SL_2(\Z), i\geq 0\}=\Z^2$ and $\{u_v+u_{-v}: v\in\Z^2\}$ linearly spans a dense subset of $B$, we deduce that $P=L(PSL_2(\Z))=(B\cap L(0\Z^2))\rtimes PSL_2(\Z)$. 
\end{proof}
Now, let us prove Theorem \ref{thm: SL_2(Z)-thm} using Theorem \ref{thm: PSL_2(Z)-thm}.
\begin{proof}[Proof of Theorem \ref{thm: SL_2(Z)-thm}]
Let $q=\frac{u_{id}+u_{-id}}{2}$ as defined in the proof of Corollary \ref{cor: max Haagerup for LG}. Since $q$ lies in the center of $B\rtimes SL_2(\Z)$ and $q(B\rtimes SL_2(\Z))\cong B\rtimes PSL_2(\Z)$, we know that 
$\pi: (qL(SL_2(\Z))\subset qP\subset q(B\rtimes SL_2(\Z)))\cong (L(PSL_2(\Z))\subset \pi(qP)\subset B\rtimes PSL_2(\Z))$, where $\pi$ is defined in the proof of Corollary \ref{cor: max Haagerup for LG}. Then by Theorem \ref{thm: PSL_2(Z)-thm}, we know that $\pi(qP)=(B\rtimes L(n\Z^2))\rtimes PSL_2(\Z)$ for some integer $n\geq 0$. From the definition of $\pi$, we deduce that $qP=q[(B\cap L(n\Z^2))\rtimes SL_2(\Z)]$.

Let $E$ denotes the trace preserving conditional expectation from $(L(\Z^2\rtimes SL_2(\Z)),\tau)$ onto $(P, \tau|_P)$. The key step is to prove the following claim:

\textbf{Claim 1:} For each $k\geq 1$, $E(u_{ke_1})=\lambda_k(u_{ke_1}+u_{-ke_1})+\mu_k(u_{ke_1}+u_{-ke_1})u_{-id}$ for some scalars $\lambda_k$ and $\mu_k$.
\begin{proof}[Proof of Claim 1]
We follow the notations and the proof of Theorem \ref{thm: complete description of intermediate vn algs} closely and do necessary modification.

\textbf{Case $k=1$.}
Let us comment on how to modify the steps in the proof of Theorem \ref{thm: complete description of intermediate vn algs}.

\textbf{Step 1}: we do not make any change. In particular, (\ref{eq: relation btw c_a and c_-a})(\ref{eq: relation btw c_g.a and c_a})(\ref{eq: key restriction for E(c_a) in proving maximality}) still hold but for all $v, w\in \Z^2$. In particular, we can still use (\ref{eq: def of c_e_1}) for the expression of $c_{e_1}$ while keeping in mind that $\lambda_{x, y}=0=\mu_{x, y}$ if $x$ or $y\in \Q\setminus \Z$.

\textbf{Step 2}: we do not make any change. 

\textbf{Step 3}: (\ref{eq: lambda is symmetric along x=-1}) still holds. After that, we argue as follows:

First, since $qE(u_{e_1})\in qP=q[(B\cap L(n\Z^2))\rtimes SL_2(\Z)]$ and $\{u_{n\ell e_1}+u_{-n\ell e_1}: \ell\in\Z\}$ linearly spans a dense subset of $B\cap L(n\Z^2)$, it is not hard to check that these imply that 
\begin{align*}
\lambda_{x-1, y}+\mu_{x-1,-y}=0,~\mbox{for all $x\in \Z\setminus n\Z$ and all $y\in\Z$.}
\end{align*}

Second, notice that $\langle u_{e_1}-E(u_{e_1}), qP\rangle=0$ and $(u_{nme_1}+u_{-nme_1})u_gq\in qP$ for all $m\in\Z$, one has
$\langle u_{e_1}-E(u_{e_1}), (u_{nme_1}+u_{-nme_1})u_gq\rangle=0.$ Then, for each $0\neq s\in\Z$, we plug $g=\big(\begin{smallmatrix}
1&s\\0&1
\end{smallmatrix}\big)$ into the above expression and use (\ref{eq: lambda is symmetric along x=-1}) to get 
\begin{align*}
\lambda_{x-1, s}+\mu_{x-1, -s}=0~\mbox{for all $x\in n\Z$ and all $0\neq s\in\Z$}. 
\end{align*}
Similarly, if we set $g=id$, then we can deduce that  
\begin{align*}
\lambda_{x-1, 0}+\mu_{x-1, 0}=0,~\mbox{if $\pm 1\neq x\in n\Z$},\\
\lambda_{x-1, 0}+\mu_{x-1, 0}=\frac{1}{2},~\mbox{if $\pm 1=x\in n\Z$}.
\end{align*}

To sum up, we have proved that
\begin{align}\label{eq: Z case and k=1, relation between lambda and mu by symmetry}
\begin{split}
\lambda_{x, y}+\mu_{x, -y}&=0, ~\forall x,\forall y\neq 0,\\
\lambda_{x, 0}+\mu_{x, 0}&=0,~\forall (x+1)\in\Z\setminus n\Z, \\
\lambda_{x, 0}+\mu_{x, 0}&=0,~\forall (x+1)\in n\Z\setminus\{\pm 1\}.
\end{split}
\end{align}

\textbf{Step 4}: notice that (\ref{eq: lambda_a, b=0 for a neq -1 and b nonzero}) there still works since $\lambda_{x, y}+\mu_{x, -y}=0$ still holds true for all $y\neq 0$ and this is the only part of (\ref{eq: relation between lambda and mu by symmetry}) needed for the proof of (\ref{eq: lambda_a, b=0 for a neq -1 and b nonzero}).

Next, observe that for the proof of the rest part of Step 4 there, the weaker version of (\ref{eq: relation between lambda and mu by symmetry}), i.e. $\lambda_{x, 0}+\mu_{x, 0}=0$ for all $x\not\in\{-2,-1,0\}$ is needed. Note that this weaker version still holds by (\ref{eq: Z case and k=1, relation between lambda and mu by symmetry}) for all $n\geq 0$. 

Therefore, Claim 1 holds for $k=1$.

%If $n=0$ or $n\geq 2$. Then (\ref{eq: Z case and k=1, relation between lambda and mu by symmetry}) tell us that $\lambda_{x, 0}+\mu_{x, 0}=0$ for all $x\in \Z$. Hence, $\mu_{-a-2,0}\overset{(\ref{eq: lambda is symmetric along x=-1})}{=}\mu_{a, 0}=-\lambda_{a, 0}$ for all $a$. 
%If $n=1$, then $\lambda_{x, 0}+\mu_{x, 0}=0$ for all $\pm 1\neq (x+1)\in\Z$. Hence, $\mu_{-a-2, 0}=\mu_{a, 0}=-\lambda_{a, 0}$ for all $a\neq 0$ or $-2$. 

\textbf{Induction step on $k$:}
assume now that Claim 1 holds true for $k=1,\cdots,\ell-1$, let us check it holds for $k=\ell$.
The proof is essentially the same as above, but notations are much more involved. We decide to postpone it to the appendix in the end of the paper.
\end{proof}
From Claim 1, we deduce $(1-q)E(u_{ke_1})=(\lambda_k-\mu_k)(u_{ke_1}+u_{-ke_1})(1-q)$ for each $k\geq 1$. 
We are very grateful to the referee for suggesting the following argument, which simplified our original argument.

Write $c_k:=\lambda_k-\mu_k$ for each $k\geq 1$. By adding the conjugation of the above identity by $g=-id$, we get $(1-q)E(u_{ke_1}+u_{-ke_1})=2(1-q)c_k(u_{ke_1}+u_{-ke_1})$. Since $(1-q)E$ is a projection, we deduce $2c_k=0$ or $1$. By taking a conjugation by any $g\in SL_2(\Z)$, we deduce that $(1-q)E(u_v+u_{-v})=(1-q)(u_v+u_{-v})$ or $0$ for all $v\in \Z^2$. As $L(SL_2(\Z))\subset P$, this concludes that $(1-q)P=(1-q)[(B\cap P)\rtimes SL_2(\Z)]$ and $B\cap P=B\cap L(m\Z^2)$, where $m$ is the smallest $k\geq 1$ with $c_k\neq 0$ (if such $k$ does not exists, take $m=0$). 
\end{proof}

\appendix\label{appendix}
\section{{Induction step in the proof of Theorem \ref{thm: SL_2(Z)-thm}}}

In this appendix, we prove the induction step needed for the proof of Claim 1, Theorem \ref{thm: SL_2(Z)-thm}.

Recall that we assume $E(u_{ke_1})=\lambda_k(u_{ke_1}+u_{-ke_1})+\mu_k(u_{ke_1}+u_{-ke_1})u_{-id}$ holds for all $k=1,\cdots, \ell-1$. From the definition of $c_v$, i.e. $c_v:=u_v^*E(u_v)$, we know that $c_{ke_1}=\lambda_k(u_{0e_1}+u_{-2ke_1})+\mu_k(u_{0e_1}+u_{-2ke_1})u_{-id}$ for all $1\leq k<\ell$.
We aim to show this identity also holds for some scalars $\lambda_k$, $\mu_k$ when $k=\ell$. 

We prepare the corresponding steps as in the proof of Theorem \ref{thm: complete description of intermediate vn algs}.

\textbf{Step 1:} it is not hard to see we can still write 
\begin{align}\label{def: Z-case, c_ell e_1}
c_{\ell e_1}=\sum_{x, y}\lambda_{x, y}\big(\begin{smallmatrix}
x\\
0
\end{smallmatrix}\big)\big(\begin{smallmatrix}
1&y\\
0&1
\end{smallmatrix}\big)+\sum_{x, y}\mu_{x, y}\big(\begin{smallmatrix}
x\\
0
\end{smallmatrix}\big)\big(\begin{smallmatrix}
-1&y\\
0&-1
\end{smallmatrix}\big).
\end{align}
We remind the reader that in the above expression, one can think of $(x, y)$ belongs to $\mathbb{Q}^2$, but both $\lambda_{x, y}$ and $\mu_{x, y}$ are zero unless $(x, y)\in\mathbb{Z}^2$.
More importantly, the identities (\ref{eq: relation btw c_a and c_-a}), (\ref{eq: relation btw c_g.a and c_a}) and (\ref{eq: key restriction for E(c_a) in proving maximality}) still hold.
Our goal is to show that $\lambda_{x, y}=0=\mu_{x, y}$ unless $(x, y)=(0, 0)$, $(-2\ell, 0)$.

\textbf{Step 2:} from $\langle u_{\ell e_1}-E(u_{\ell e_1}), P\rangle=0$ and $u_g\in P$, we infer that
\begin{align}\label{eq: Z case and k=ell, lambda_-1, b=0=mu_-1, b}
\lambda_{-\ell, b}=0=\mu_{-\ell, b}~\mbox{for all}~b.
\end{align}

Using $\sigma_{\ell e_1}(c_{\ell e_1})=c_{-\ell e_1}^*$, we can deduce the following analogue of (\ref{eq: conjugation restriction on coefficients}): 
\begin{align*}%\label{eq: Z case and k=ell, conjugation restriction on coefficients}
\lambda_{x, y}=\overline{\lambda_{x,-y}}~\mbox{and}~\mu_{x, y}=\overline{\mu_{-2\ell-x,-y}}~\mbox{for all}~x, y. 
\end{align*}

Next, let us compute both sides of (\ref{eq: key restriction for E(c_a) in proving maximality}) by setting $g=id$, $v=\ell e_1$ and $w=\ell e_2$.
\begin{align*}
\mbox{RHS of (\ref{eq: key restriction for E(c_a) in proving maximality})}
&=\sum_{x, y, a, b}\lambda_{x, y}\lambda_{a, b}\big(\begin{smallmatrix}
x+\ell+(a+\ell)y\\ \ell+a
\end{smallmatrix}\big)\big(\begin{smallmatrix}
1-by&y\\-b&1
\end{smallmatrix}\big)+\sum_{x, y, a, b}\lambda_{x, y}\mu_{a, b}\big(\begin{smallmatrix}
x+\ell+(a+\ell)y\\ \ell+a
\end{smallmatrix}\big)\big(\begin{smallmatrix}
-1-by&-y\\-b&-1
\end{smallmatrix}\big)\\
&+\sum_{x, y, a, b}\mu_{x, y}\lambda_{a, b}\big(\begin{smallmatrix}
x+\ell+(a+\ell)y\\ -\ell-a
\end{smallmatrix}\big)\big(\begin{smallmatrix}
-1-by&y\\b&-1
\end{smallmatrix}\big)+\sum_{x, y, a, b}\mu_{x, y}\mu_{a, b}\big(\begin{smallmatrix}
x+\ell+(a+\ell)y\\ -\ell-a
\end{smallmatrix}\big)\big(\begin{smallmatrix}
1-by&-y\\b&1
\end{smallmatrix}\big).\\
\mbox{LHS of (\ref{eq: key restriction for E(c_a) in proving maximality})}
&=\sum_{x, y}\lambda_{x, y}E\big[\big(\begin{smallmatrix}
\ell+x+\ell y\\ \ell
\end{smallmatrix}\big)\big]\big(\begin{smallmatrix}
1&y\\0&1
\end{smallmatrix}\big)+\sum_{x, y}\mu_{x, y}E\big[\big(\begin{smallmatrix}
\ell+x+\ell y\\-\ell
\end{smallmatrix}\big)\big]\big(\begin{smallmatrix}
-1&y\\0&-1
\end{smallmatrix}\big).
\end{align*}
Now, we need to compute each summand of the above expression. 

For the 1st summand, we may use the definition $E(u_v)=u_vc_v$ and (\ref{eq: relation btw c_g.a and c_a}) to deduce that
\begin{align*}
 \sum_{x, y}\lambda_{x, y}E\big[\big(\begin{smallmatrix}
\ell+x+\ell y\\ \ell
\end{smallmatrix}\big)\big]\big(\begin{smallmatrix}
1&y\\0&1
\end{smallmatrix}\big)
&=\sum_{\ell\mid x}\sum_y\lambda_{x, y}\big(\begin{smallmatrix}
\ell+x+\ell y\\ \ell
\end{smallmatrix}\big)\big(\begin{smallmatrix}
\frac{x}{\ell}+1+y&\frac{x}{\ell}+y\\
1&1
\end{smallmatrix}\big)c_{\ell e_1}\big(\begin{smallmatrix}
\frac{x}{\ell}+1+y&\frac{x}{\ell}+y\\
1&1
\end{smallmatrix}\big)^{-1}\big(\begin{smallmatrix}
1&y\\0&1
\end{smallmatrix}\big)\\
&+\sum_{\ell\nmid x}\sum_y\lambda_{x, y}\big(\begin{smallmatrix}
x+\ell+\ell y\\ \ell
\end{smallmatrix}\big)\big(\begin{smallmatrix}
\frac{x+\ell+\ell y}{d_x}&z_x\\
\frac{\ell}{d_x}&w_x
\end{smallmatrix}\big)c_{d_xe_1}\big(\begin{smallmatrix}
\frac{x+\ell+\ell y}{d_x}&z_x\\
\frac{\ell}{d_x}&w_x
\end{smallmatrix}\big)^{-1}\big(\begin{smallmatrix}
1&y\\
0&1
\end{smallmatrix}\big).
\end{align*}
Here, $d_x:=\gcd(\ell, |x|)$ and $(\frac{x+\ell+\ell y}{d_x})w_x-\frac{z_x\ell}{d_x}=1$ for some integers $w_x$, $z_x$.

Next, by induction hypothesis, $c_{de_1}=\lambda_d(u_{0e_1}+u_{-2de_1})+\mu_d(u_{0e_1}+u_{-2de_1})u_{-id},~\forall~d<\ell$. Hence, we can substitute this expression and (\ref{def: Z-case, c_ell e_1}) for $c_{de_1} (\forall~d<\ell)$ and $c_{\ell e_1}$ respectively, and then split $\sum_{\ell\nmid x}$ as $\sum_{1\leq d<\ell}\sum_{x, gcd(\ell, |x|)=d}$. Finally, we arrive at
\begin{align*}
\sum_{x, y}\lambda_{x, y}E\big[\big(\begin{smallmatrix}
\ell+x+\ell y\\ \ell
\end{smallmatrix}\big)\big]\big(\begin{smallmatrix}
1&y\\0&1
\end{smallmatrix}\big)
&=\sum_{\ell\mid x}\sum_{y,a,b}\lambda_{x, y}\lambda_{a, b}\big(\begin{smallmatrix}
(x+\ell+\ell y)(1+\frac{a}{\ell})\\ \ell+a
\end{smallmatrix}\big)\big(\begin{smallmatrix}
1-b(\frac{x}{\ell}+1+y)&y+b(\frac{x}{\ell}+1)(\frac{x}{\ell}+1+y)\\
-b&b(\frac{x}{\ell}+1)+1
\end{smallmatrix}\big)\\
&+\sum_{\ell\mid x}\sum_{y,a,b}\lambda_{x, y}\mu_{a,b}\big(\begin{smallmatrix}
(x+\ell+\ell y)(1+\frac{a}{\ell})\\ \ell+a
\end{smallmatrix}\big)\big(\begin{smallmatrix}
-1-b(\frac{x}{\ell}+1+y)&-y+b(\frac{x}{\ell}+1)(\frac{x}{\ell}+1+y)\\
-b&b(\frac{x}{\ell}+1)-1
\end{smallmatrix}\big)\\
&+\sum_{1\leq d<\ell}\sum_{\substack{x, y\\ \gcd(|x|, \ell)=d}}\lambda_{x, y}\lambda_d\big[\big(\begin{smallmatrix}
x+\ell+\ell y\\ \ell
\end{smallmatrix}\big)+\big(\begin{smallmatrix}
-(x+\ell+\ell y)\\ -\ell
\end{smallmatrix}\big)\big]\big(\begin{smallmatrix}
1&y\\0&1
\end{smallmatrix}\big)\\
&+\sum_{1\leq d<\ell}\sum_{\substack{x, y\\ \gcd(|x|, \ell)=d}}\lambda_{x, y}\mu_d\big[\big(\begin{smallmatrix}
x+\ell+\ell y\\ \ell
\end{smallmatrix}\big)+\big(\begin{smallmatrix}
-(x+\ell+\ell y)\\ -\ell
\end{smallmatrix}\big)\big]\big(\begin{smallmatrix}
-1&-y\\0&-1
\end{smallmatrix}\big).
\end{align*}
Similarly, for the 2nd summand in the LHS of (\ref{eq: key restriction for E(c_a) in proving maximality}), we have
\begin{align*}
\sum_{x, y}\mu_{x, y}E\big[\big(\begin{smallmatrix}
\ell+x+\ell y\\-\ell
\end{smallmatrix}\big)\big]\big(\begin{smallmatrix}
-1&y\\0&-1
\end{smallmatrix}\big)
&=\sum_{\ell\mid x}\sum_{y,a,b}\mu_{x, y}\lambda_{a, b}\big(\begin{smallmatrix}
(x+\ell+\ell y)(1+\frac{a}{\ell})\\ -(\ell+a)
\end{smallmatrix}\big)\big(\begin{smallmatrix}
-1-b(\frac{x}{\ell}+1+y)&y-b(\frac{x}{\ell}+1)(\frac{x}{\ell}+1+y)\\
b&b(\frac{x}{\ell}+1)-1
\end{smallmatrix}\big)\\
&+\sum_{\ell\mid x}\sum_{y,a,b}\mu_{x, y}\mu_{a,b}\big(\begin{smallmatrix}
(x+\ell+\ell y)(1+\frac{a}{\ell})\\ -(\ell+a)
\end{smallmatrix}\big)\big(\begin{smallmatrix}
1-b(\frac{x}{\ell}+1+y)&-y-b(\frac{x}{\ell}+1)(\frac{x}{\ell}+1+y)\\
b&b(\frac{x}{\ell}+1)+1
\end{smallmatrix}\big)\\
&+\sum_{1\leq d<\ell}\sum_{\substack{x, y\\ \gcd(|x|, \ell)=d}}\mu_{x, y}\lambda_d\big[\big(\begin{smallmatrix}
x+\ell+\ell y\\ -\ell
\end{smallmatrix}\big)+\big(\begin{smallmatrix}
-(x+\ell+\ell y)\\ \ell
\end{smallmatrix}\big)\big]\big(\begin{smallmatrix}
-1&y\\0&-1
\end{smallmatrix}\big)\\
&+\sum_{1\leq d<\ell}\sum_{\substack{x, y\\ \gcd(|x|, \ell)=d}}\mu_{x, y}\mu_d\big[\big(\begin{smallmatrix}
x+\ell+\ell y\\ -\ell
\end{smallmatrix}\big)+\big(\begin{smallmatrix}
-(x+\ell+\ell y)\\ \ell
\end{smallmatrix}\big)\big]\big(\begin{smallmatrix}
1&-y\\0&1
\end{smallmatrix}\big).
\end{align*}

For any given $s$, $a\neq -\ell$, $b$ and $y$ in $\Z$, by computing the coefficients of the term $\big(\begin{smallmatrix}
s\\a+\ell
\end{smallmatrix}\big)\big(\begin{smallmatrix}
1-by&y\\-b&1
\end{smallmatrix}\big)$ on both sides of (\ref{eq: key restriction for E(c_a) in proving maximality}),
we get the following identity:
\begin{align}\label{system of equtions: Z-case and k=ell, eq3-eq4}
\begin{split}
&\quad\quad\lambda_{s-\ell-(a+\ell)y, y}\lambda_{a, b}+\mu_{s-\ell-(a+\ell)y, -y}\mu_{-2\ell-a, -b}\\
&=\begin{cases}
\lambda_{\ell(\frac{s}{a+\ell}-1-y), y}\lambda_{a, 0}+\mu_{\ell(y-1-\frac{s}{a+\ell}), -y}\mu_{-a-2\ell, 0},~&\mbox{if}~b=0, s\neq 0, a\neq 0, -\ell, -2\ell~\mbox{and}~(a+\ell)\mid s\\
0,~&\mbox{if}~b=0, s\neq 0, a\neq 0, -\ell, -2\ell~\mbox{and}~(a+\ell)\nmid s\\
0, ~&\mbox{if}~bs\neq 0~\mbox{and}~y\neq \frac{s}{a+\ell}+\frac{2}{b}, \frac{s}{a+\ell}.
\end{cases}
\end{split}
\end{align}

Similarly, for any given $s$, $a\neq -\ell$, $b$ and $y$ in $\Z$, we compare the coefficients of the term $\big(\begin{smallmatrix}
s\\-a-\ell
\end{smallmatrix}\big)\big(\begin{smallmatrix}
-1-by&y\\ b&-1
\end{smallmatrix}\big)$ on both sides of (\ref{eq: key restriction for E(c_a) in proving maximality}). The following identity holds.
\begin{align}\label{system of equtions: Z-case and k=ell, eq7-eq8}
\begin{split}
&\quad\quad \lambda_{s-\ell-(a+\ell)y, -y}\mu_{-2\ell-a, -b}+\mu_{s-\ell-(a+\ell)y, y}\lambda_{a, b}\\
&=\begin{cases}
\lambda_{\ell(y-1-\frac{s}{a+\ell}), -y}\mu_{-a-2\ell, 0}+\mu_{\ell(\frac{s}{a+\ell}-1-y), y}\lambda_{a, 0},~&\mbox{if}~b=0, s\neq 0, a\neq 0, -\ell, -2\ell~\mbox{and}~(a+\ell)\mid s\\
0,~&\mbox{if}~b=0, s\neq 0, a\neq 0, -\ell, -2\ell~\mbox{and}~(a+\ell)\nmid s\\
0, ~&\mbox{if}~bs\neq 0~\mbox{and}~y\neq \frac{s}{a+\ell}-\frac{2}{b}, \frac{s}{a+\ell}.
\end{cases}
\end{split}
\end{align}

\textbf{Step 3:} similar to (\ref{eq: lambda is symmetric along x=-1}), we have
\begin{align}\label{eq: Z case and k=ell, lambda is symmetric along x=-1}
\lambda_{x, y}=\lambda_{-2\ell-x, y}~\mbox{and}~\mu_{x, y}=\mu_{-2\ell-x, y},~\forall x,~\forall y.
\end{align}

Next, from $qE(u_{\ell e_1})\in qP=q[(B\cap L(n\Z^2))\rtimes SL_2(\Z)]$ and (\ref{eq: Z case and k=ell, lambda is symmetric along x=-1}), we can deduce that 
\[\lambda_{x, y}+\mu_{x, -y}=0,~\forall~(x+\ell)\in\Z\setminus n\Z,~\forall~y. \]
Similarly, from $\langle u_{\ell e_1}-E(u_{\ell e_1}), qP\rangle=0$, $(u_{nte_1}+u_{-nte_1})u_gq\in qP$ for all $t\in\Z$ and (\ref{eq: Z case and k=ell, lambda is symmetric along x=-1}), we deduce that
\begin{align*}
\lambda_{x, y}+\mu_{x, -y}&=0, ~\forall~(x+\ell)\in n\Z, \forall~y\neq 0.\\
\lambda_{x, 0}+\mu_{x, 0}&=0,~\forall~(x+\ell)\in n\Z\setminus\{\pm \ell\}.\\
\lambda_{x, 0}+\mu_{x, 0}&=\frac{1}{2},~\mbox{if}~x=0, -2\ell~\mbox{and}~ \ell\in n\Z.
\end{align*}
To sum up, we have shown that
\begin{align}\label{eq: Z case and k=ell, relation between lambda and mu by symmetry}
\begin{split}
\lambda_{x, y}+\mu_{x, -y}&=0, ~\forall~x, \forall~y\neq 0.\\
\lambda_{x, 0}+\mu_{x, 0}&=0,~\forall~x\in \Z\setminus \{0, -2\ell\}.
\end{split}
\end{align}

\textbf{Step 4:} show that $\lambda_{x, y}=0=\mu_{x, y}$ unless $(x, y)=(0, 0)$, $(-2\ell, 0)$.

By (\ref{eq: Z case and k=ell, lambda is symmetric along x=-1}) and (\ref{eq: Z case and k=ell, relation between lambda and mu by symmetry}), we can simplify the last case in both (\ref{system of equtions: Z-case and k=ell, eq3-eq4}) and (\ref{system of equtions: Z-case and k=ell, eq7-eq8}) to the following:
\begin{align*}
\lambda_{s-\ell-(a+\ell)y, y}\lambda_{a, b}&=0,~\forall~bs\neq 0,~\forall~a\neq -\ell,~\forall~y\neq 0,~ \frac{s}{a+\ell},~\frac{s}{a+\ell}+\frac{2}{b},\\
\lambda_{s-\ell-(a+\ell)y, -y}\lambda_{a, b}&=0,~\forall~bs\neq 0,~\forall~a\neq -\ell,~\forall~y\neq 0,~ \frac{s}{a+\ell},~\frac{s}{a+\ell}-\frac{2}{b}.
\end{align*}

By a similar argument as we used to prove (\ref{eq: lambda_a, b=0 for a neq -1 and b nonzero}), we can check that $\lambda_{a, b}=0$ for all $a\neq -\ell$ and $b\neq 0$. 
By combining this with (\ref{eq: Z case and k=ell, lambda_-1, b=0=mu_-1, b}), we have shown that
\[\lambda_{a, b}=0~\mbox{unless $b=0$.} \]

Now, we solve for $\lambda_{a, 0}$.

We plug $y=0$ in the 1st and 2nd case in (\ref{system of equtions: Z-case and k=ell, eq3-eq4})-(\ref{system of equtions: Z-case and k=ell, eq7-eq8}), and use  (\ref{eq: Z case and k=ell, lambda is symmetric along x=-1})-(\ref{eq: Z case and k=ell, relation between lambda and mu by symmetry}) to get a single expression:
\begin{align*}
&\quad \lambda_{s-\ell, 0}\lambda_{a, 0}-\mu_{s-\ell, 0}\lambda_{a, 0}\\
&=\begin{cases}
\lambda_{\ell(\frac{s}{a+\ell}-1), 0}\lambda_{a, 0}-\mu_{\ell(\frac{s}{a+\ell}-1), 0}\lambda_{a, 0},~&\mbox{if}~s\neq0, a\neq 0, -\ell,-2\ell~\mbox{and}~(a+\ell)\mid s\\
0,~&\mbox{if}~s\neq0, a\neq 0, -\ell,-2\ell~\mbox{and}~(a+\ell)\nmid s.
\end{cases}
\end{align*}
Assume $\lambda_{a, 0}\neq 0$ for some $a\neq 0, -\ell, -2\ell$, then 
\begin{align*}
 \lambda_{s-\ell, 0}-\mu_{s-\ell, 0}
=\begin{cases}
\lambda_{\ell(\frac{s}{a+\ell}-1), 0}-\mu_{\ell(\frac{s}{a+\ell}-1), 0},~&\mbox{if}~s\neq0~\mbox{and}~(a+\ell)\mid s\\
0,~&\mbox{if}~s\neq0~\mbox{and}~(a+\ell)\nmid s.
\end{cases}
\end{align*}
By change of variables, this means that
\begin{align}\label{eq: Z case and final eq to solve for lambda_a, 0}
\begin{split}
\lambda_{s-\ell, 0}&=\mu_{s-\ell, 0}~\mbox{for all $s\neq 0$ with $(a+\ell)\nmid s$},\\
\lambda_{t(a+\ell)-\ell, 0}-\mu_{t(a+\ell)-\ell, 0}&=\lambda_{\ell(t-1), 0}-\mu_{\ell(t-1), 0}~\mbox{for all $0\neq t\in\Z$.}
\end{split}
\end{align}
Next, we split the proof by considering two cases.

\textbf{Case 1}: $(a+\ell)\nmid \ell$.

Plug $s=\ell$ in (\ref{eq: Z case and final eq to solve for lambda_a, 0}), we get $\lambda_{0, 0}=\mu_{0, 0}$. Plug $t=1$ in (\ref{eq: Z case and final eq to solve for lambda_a, 0}), we get $\lambda_{a, 0}-\mu_{a, 0}=\lambda_{0, 0}-\mu_{0, 0}=0$. So $\lambda_{a, 0}=\mu_{a, 0}=0$ by (\ref{eq: Z case and k=ell, relation between lambda and mu by symmetry}), a contradiction.

\textbf{Case 2}: $(a+\ell)\mid \ell$.

Notice that by (\ref{eq: Z case and k=ell, lambda is symmetric along x=-1}), we may further assume $a+\ell>0$.
Write $\ell=(a+\ell)d$ for some integer $d$. Note that $1<d\leq \ell$ as $a\neq 0$.
Then, by using $\ell=(a+\ell)d$, the 2nd identity in (\ref{eq: Z case and final eq to solve for lambda_a, 0}) can be written as $\lambda_{(t-d)(a+\ell), 0}-\mu_{(t-d)(a+\ell), 0}=\lambda_{(td-d)(a+\ell), 0}-\mu_{(td-d)(a+\ell), 0}$ for all $0\neq t\in \Z$. Thus after replacing $t$ by $td$, $td^2$,$\cdots$, we can deduce that
\begin{align*}
&\quad\lambda_{(t-d)(a+\ell), 0}-\mu_{(t-d)(a+\ell), 0}\\
&=\lambda_{(td-d)(a+\ell), 0}-\mu_{(td-d)(a+\ell), 0}\\
&=\lambda_{(td^2-d)(a+\ell), 0}-\mu_{(td^2-d)(a+\ell), 0}\\
&=\cdots\\
&=\lambda_{(td^k-d)(a+\ell), 0}-\mu_{(td^k-d)(a+\ell), 0},~\forall~k\geq 1,~\forall~0\neq t\in\Z.
\end{align*}
Notice that $(td^k-d)(a+\ell)\neq (td^{k'}-d)(a+\ell)$ for all $k\neq k'$ as $t\neq 0$ and $d>1$.
Argue as in the proof of Step 4 in Theorem \ref{thm: complete description of intermediate vn algs}, this implies that $\lambda_{(t-d)(a+\ell), 0}=\mu_{(t-d)(a+\ell), 0}$ for all $0\neq t\in\Z$. In particular, take $t=1$, we get $\lambda_{a, 0}=\mu_{a, 0}$, hence $\lambda_{a, 0}=0$ by (\ref{eq: Z case and k=ell, relation between lambda and mu by symmetry}), a contradiction.

Therefore, we have shown that $\lambda_{a, 0}=0$ for all $a\neq 0$, $-\ell$, $-2\ell$. By combining this with (\ref{eq: Z case and k=ell, lambda_-1, b=0=mu_-1, b}) and (\ref{eq: Z case and k=ell, relation between lambda and mu by symmetry}), we finish the proof.
\subsection*{Acknowledgements} Part of this work was done when Y.J. was a postdoc in IMPAN, Poland, where he was partially supported by the National Science Center (NCN) grant no. 2014/14/E/ST1/00525. He thanks Prof. Adam Skalski for helpful discussion and support there. He is also supported by ``the Fundamental Research Funds for the Central Universities" (Grant No.DUT19RC(3)075). We also thank the referee for his/her critical comments and helpful suggestions which help improve the presentation of the paper greatly.

\begin{bibdiv}
\begin{biblist}

\bib{amrutam}{article}{
author={Amrutam, T.},
title={On intermediate C$^*$-subalgebras of C$^*$-simple group actions},
journal={Int. Math. Res. Not. IMRN},
status={DOI:10.1093/imrn/rnz291}
}

\bib{bdv}{book}{
   author={Bekka, B.},
   author={de la Harpe, P.},
   author={Valette, A.},
   title={Kazhdan's property (T)},
   series={New Mathematical Monographs},
   volume={11},
   publisher={Cambridge University Press, Cambridge},
   date={2008},
   pages={xiv+472},}

%\bib{cs_advance}{article}{
%   author={Cameron, J.},
%   author={Smith, R. R.},
%   title={Bimodules in crossed products of von Neumann algebras},
%   journal={Adv. Math.},
%   volume={274},
%   date={2015},
%   pages={539--561},}

\bib{cs_internat}{article}{
   author={Cameron, J.},
   author={Smith, R. R.},
   title={Intermediate subalgebras and bimodules for general crossed
   products of von Neumann algebras},
   journal={Internat. J. Math.},
   volume={27},
   date={2016},
   number={11},
   pages={1650091, 28},}

\bib{ccjjv}{book}{
   author={Cherix, P.-A.},
   author={Cowling, M.},
   author={Jolissaint, P.},
   author={Julg, P.},
   author={Valette, A.},
   title={Groups with the Haagerup property},
   series={Progress in Mathematics},
   volume={197},
   note={Gromov's a-T-menability},
   publisher={Birkh\"{a}user Verlag, Basel},
   date={2001},
   pages={viii+126},}

\bib{chifan_das}{article}{
author={Chifan, I.},
author={Das, S.},
title={Rigidity results for von Neumann algebras arising from mixing
extensions of profinite actions of groups on probability spaces},
status={arXiv: 1903.07143v3},
}

\bib{choda_h}{article}{
   author={Choda, H.},
   title={A Galois correspondence in a von Neumann algebra},
   journal={Tohoku Math. J. (2)},
   volume={30},
   date={1978},
   number={4},
   pages={491--504},}
   
\bib{choda_m}{article}{
   author={Choda, M.},
   title={Group factors of the Haagerup type},
   journal={Proc. Japan Acad. Ser. A Math. Sci.},
   volume={59},
   date={1983},
   number={5},
   pages={174--177},}   

\bib{connes_annals}{article}{
   author={Connes, A.},
   title={Classification of injective factors. Cases ${\rm II}_1,$
   ${\rm II}_{\infty},$ ${\rm III}_{\lambda},$ $\lambda \not=1$},
   journal={Ann. of Math. (2)},
   volume={104},
   date={1976},
   number={1},
   pages={73--115},}
   
   \bib{connes_jones}{article}{
   author={Connes, A.},
   author={Jones, V.},
   title={Property $T$ for von Neumann algebras},
   journal={Bull. London Math. Soc.},
   volume={17},
   date={1985},
   number={1},
   pages={57--62},}

\bib{ghw}{article}{
   author={Guentner, E.},
   author={Higson, N.},
   author={Weinberger, S.},
   title={The Novikov conjecture for linear groups},
   journal={Publ. Math. Inst. Hautes \'{E}tudes Sci.},
   number={101},
   date={2005},
   pages={243--268},}

\bib{haagerup}{article}{
   author={Haagerup, U.},
   title={An example of a nonnuclear $C^{\ast} $-algebra, which has the
   metric approximation property},
   journal={Invent. Math.},
   volume={50},
   date={1978/79},
   number={3},
   pages={279--293},}

\bib{haagerup_31}{article}{
   author={Haagerup, U.},
   title={Connes' bicentralizer problem and uniqueness of the injective
   factor of type ${\rm III}_1$},
   journal={Acta Math.},
   volume={158},
   date={1987},
   number={1-2},
   pages={95--148},}

\bib{haga}{article}{
   author={Haga, Y.},
   title={On subalgebras of a cross product von Neumann algebra},
   journal={Tohoku Math. J. (2)},
   volume={25},
   date={1973},
   pages={291--305},}

%\bib{hayashi}{article}{
%   author={Hayashi, T.},
%   title={On automorphisms of $L(\Bbb Z^2\rtimes {\rm SL}(2,\Bbb Z))$},
%   journal={J. Funct. Anal.},
%   volume={207},
%   date={2004},
%   number={2},
%  pages={430--443},}

\bib{ioana_adv}{article}{
   author={Ioana, A.},
   title={Relative property (T) for the subequivalence relations induced by
   the action of $SL_2(\mathbb{Z})$ on $\mathbb{T}^2$},
   journal={Adv. Math.},
   volume={224},
   date={2010},
   number={4},
   pages={1589--1617},}

\bib{ioana_duke}{article}{
   author={Ioana, A.},
   title={Cocycle superrigidity for profinite actions of property (T)
   groups},
   journal={Duke Math. J.},
   volume={157},
   date={2011},
   number={2},
   pages={337--367},}

\bib{jiang}{article}{
author={Jiang, Y.},
title={Maximal von Neumann subalgebras arising from maximal subgroups},
status={Sci. China Math., accepted},
}

\bib{js}{article}{
author={Jiang, Y.},
author={Skalski, A.},
title={Maximal subgroups and von Neumann subalgebras with the Haagerup property},
status={arXiv: 1903.08190v4}
}

\bib{jolissaint}{article}{
   author={Jolissaint, P.},
   title={Haagerup approximation property for finite von Neumann algebras},
   journal={J. Operator Theory},
   volume={48},
   date={2002},
   number={3, suppl.},
   pages={549--571},}

\bib{jones_xu}{article}{
   author={Jones, V. F. R.},
   author={Xu, F.},
   title={Intersections of finite families of finite index subfactors},
   journal={Internat. J. Math.},
   volume={15},
   date={2004},
   number={7},
   pages={717--733},}

\bib{kerrli_book}{book}{
   author={Kerr, D.},
   author={Li, H.},
   title={Ergodic theory},
   series={Springer Monographs in Mathematics},
   note={Independence and dichotomies},
   publisher={Springer, Cham},
   date={2016},
   pages={xxxiv+431},}

\bib{margulis}{article}{
   author={Margulis, G. A.},
   title={Finitely-additive invariant measures on Euclidean spaces},
   journal={Ergodic Theory Dynam. Systems},
   volume={2},
   date={1982},
   number={3-4},
   pages={383--396 (1983)},}

\bib{packer}{article}{
   author={Packer, J. A.},
   title={On the embedding of subalgebras corresponding to quotient actions
   in group-measure factors},
   journal={Pacific J. Math.},
   volume={119},
   date={1985},
   number={2},
   pages={407--443},}

\bib{park}{article}{
   author={Park, K.},
   title={$GL(2,\mathbb{Z})$ action on a two torus},
   journal={Proc. Amer. Math. Soc.},
   volume={114},
   date={1992},
   number={4},
   pages={955--963},}

   \bib{pimsner_popa}{article}{
   author={Pimsner, M.},
   author={Popa, S.},
   title={Entropy and index for subfactors},
   journal={Ann. Sci. \'{E}cole Norm. Sup. (4)},
   volume={19},
   date={1986},
   number={1},
   pages={57--106},}

\bib{popa_injective}{article}{
   author={Popa, S.},
   title={Maximal injective subalgebras in factors associated with free
   groups},
   journal={Adv. in Math.},
   volume={50},
   date={1983},
   number={1},
   pages={27--48},}

\bib{Po99}{article}{
   author={Popa, S.},
   title={Some properties of the symmetric enveloping algebra of a
   subfactor, with applications to amenability and property T},
   journal={Doc. Math.},
   volume={4},
   date={1999},
   pages={665--744},}

\bib{popa_annals}{article}{
   author={Popa, S.},
   title={On a class of type ${\rm II}_1$ factors with Betti numbers
   invariants},
   journal={Ann. of Math. (2)},
   volume={163},
   date={2006},
   number={3},
   pages={809--899},}

%\bib{pv_symmetry group}{article}{
%   author={Popa, S.},
%   author={Vaes, S.},
%   title={Strong rigidity of generalized Bernoulli actions and computations
%   of their symmetry groups},
%   journal={Adv. Math.},
%   volume={217},
%   date={2008},
%   number={2},
%   pages={833--872},}

\bib{schmidt_book}{book}{
   author={Schmidt, K.},
   title={Dynamical systems of algebraic origin},
   series={Progress in Mathematics},
   volume={128},
   publisher={Birkh\"{a}user Verlag, Basel},
   date={1995},
   pages={xviii+310},}

\bib{suzuki}{article}{
author={Suzuki, Y.},
title={Complete descriptions of intermediate operator algebras by intermediate extensions of dynamical systems},
journal={Commun. Math. Phys.},
status={to appear},
}

%\bib{wat}{article}{
%   author={Watanabe, S.},
%   title={On crossed products with property ${\rm T}$},
%   journal={Proc. Amer. Math. Soc.},
%   volume={99},
%   date={1987},
%   number={4},
%   pages={647--650},}

\bib{witte}{article}{
   author={Witte, D.},
   title={Measurable quotients of unipotent translations on homogeneous
   spaces},
   journal={Trans. Amer. Math. Soc.},
   volume={345},
   date={1994},
   number={2},
   pages={577--594},}

\end{biblist}
\end{bibdiv}

\end{document}